\newtheorem{thm}{Theorem}[section]
\newtheorem{cor}[thm]{Corollary} 
\newtheorem{lem}[thm]{Lemma}
\newtheorem{prop}[thm]{Proposition}
\newtheorem{convention}[thm]{Convention}
\newtheorem*{thm*}{Theorem}
\newtheorem*{prop*}{Proposition}
\newtheorem*{thmA}{Theorem A}
\newtheorem{defn}[thm]{Definition}
\newtheorem*{defn*}{Definition} 
\newtheorem{rem}[thm]{Remark}
\newtheorem*{rem*}{Remark}
\numberwithin{equation}{section}
\renewcommand{\phi}{\varphi}
\renewcommand{\bar}{\overline}
\newcommand{\R}{\mathbb{R}}
\newcommand{\N}{\mathbb{N}}
\newcommand{\ssm}{\smallsetminus}
\newcommand{\inv}{^{-1}}
\newcommand{\FN}{F_N}
\newcommand{\Stab}{\text{Stab}}
\newcommand{\dom}{\text{dom}}
\newcommand{\CQ}{Q}
\newcommand{\cvn}{\text{cv}_N}
\newcommand{\barcvn}{\bar{\text{cv}}_N}
\newcommand{\That}{\widehat T}
\newcommand{\Thatobs}{\widehat T^{\text{obs}}}
\newcommand{\wh}{\text{Wh}}
\newcommand{\ind}{\text{ind}}
\newcommand{\indgeom}{\text{ind}_\text{geom}}
\newcommand{\Rank}{\text{Rank}}
\newcommand{\acts}{\curvearrowright}
\newcommand{\ikl}{i_{\text{KL}}}
\begin{document}

\title{Indecomposable $\FN$-trees and minimal laminations}

\author{Thierry Coulbois, Arnaud Hilion and Patrick Reynolds}

\date{\today}

\maketitle

\begin{abstract}
  We extend the techniques of \cite{CH10} to build an inductive procedure for studying actions in the boundary of the Culler-Vogtmann
  Outer Space, the main novelty being an adaptation of the classical
  Rauzy-Veech induction for studying actions of \emph{surface type}.  As an
  application, we prove that a tree in the boundary of Outer space is
  free and indecomposable if and only if its dual lamination is
  \emph{minimal up to diagonal leaves}.  Our main result generalizes
  \cite[Proposition 1.8]{BFH97} as well as the main result of
  \cite{KL11}.
\end{abstract}

\tableofcontents

\section{Introduction}

We consider  $\R$-trees $T$ equipped with a minimal very small action 
of the free group $\FN$ of rank $N$ by isometries: these are points in the closure of the celebrated 
unprojectivized Culler-Vogtmann Outer Space $\barcvn$ \cite{CV86}.
A dual lamination $L(T)$ is associated to such trees \cite{CHL08a} . We recall that a lamination for the free group is a closed $\FN$-invariant, flip invariant, subset of the double Gromov boundary $\partial^2\FN=(\partial\FN)^2\ssm \Delta$, where $\Delta$ is the diagonal. The main result result of the paper relates \emph{minimality} properties of the tree to \emph{minimality} properties of its dual lamination.

For an action $\FN\acts T$ there are several notions of minimality for the dynamics of the action; see Section \ref{SS.Mixing}. Mixing trees were considered by Morgan \cite{Mor88}, and Guirardel introduced the stronger notion of indecomposability in \cite{Gui08}. Mixing and indecomposable trees are the elementary objects from which any tree in the closure of Outer Space can be built \cite{R10b}.

\begin{defn}
An $\R$-tree $T\in\barcvn$ is \textbf{indecomposable} if for any non-degenerate segments $I$ and
  $J$ in $T$, there exist finitely many elements $u_1,\ldots,u_n$ in
  $\FN$ such that
\begin{enumerate}
\item $I\subseteq u_1J\cup u_2J\cup\cdots\cup u_nJ$
\item $u_iJ\cap u_{i+1}J$ is a non degenerate segment for any $i=1,\ldots,n-1$.
\end{enumerate}
\end{defn}

A lamination $L$ is minimal if it does not contain a proper sublamination, or, equivalently, if the orbit of any leaf of $L$ is dense in $L$.  Laminations dual to trees are always diagonally closed: if $L(T)$ contains leaves $l_1=(X_1,X_2)$ and $l_2=(X_2,X_3)$ then $L(T)$ also contains the leaf $l=(X_1,X_3)$. Oftentimes the diagonal leaves are isolated in the dual lamination. 
Thus we need a slightly modified notion of minimality for laminations dual to $\R$-trees.

\begin{defn}
  A lamination $L$ is \textbf{minimal up to diagonal leaves} if :
  \begin{enumerate}
  \item [(i)] there is a unique minimal sublamination $L_0 \subseteq
    L$, and
  \item [(ii)] $L \ssm L_0$ consists of finitely many
    $\FN$-orbits of leaves, each of which is diagonal over $L_0$.
\end{enumerate}
\end{defn}

In the definition, a leaf $l \in L$ is \textbf{diagonal} over a
sublamination $L_0 \subseteq L$ if there are leaves $(X_1,X_2),
(X_2,X_3), \ldots (X_{n-1},X_n) \in L_0$ such that $l=(X_1,X_n)$. Note that the above notion of minimality coincides with the correct notion of minimality for foliations on a closed surface.  Our main result is:

\begin{thmA}
  Let $T$ be an $\R$-tree with a free, minimal action of $\FN$ by
  isometries with dense orbits. The tree $T$ is indecomposable if and
  only if $L(T)$ is minimal up to diagonal leaves. 
  
  In this case the unique 
  minimal sublamination is the derived sublamination $L(T)'$: the subset of non-isolated leaves.
\end{thmA}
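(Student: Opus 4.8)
\emph{Strategy.} Fix a basis $A$ of $\FN$ and replace $T$ by the compact heart $K_A\subseteq\bar T$ together with the system of isometries $\mathcal S=\mathcal S_A$ it carries --- a compact band complex representing $T$. As in \cite{CH10}, the leaves of $L(T)$ are exactly the reduced leaves of $\mathcal S$, and the derived part $L(T)'$ corresponds to the leaves carried by the limit set $\Omega\subseteq K_A$. We also invoke two general facts: $\FN\acts T$ has only finitely many orbits of branch points, each of finite valence (Gaboriau--Levitt); and, by the index theory of \cite{CH10}, the leaves of $L(T)$ not carried by $\Omega$ form finitely many $\FN$-orbits, each diagonal over the recurrent part. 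Thus the theorem reduces to: $T$ is indecomposable if and only if $\mathcal S$ is minimal on $K_A$ with a single minimal component.

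\emph{The inductive procedure.} We run Rips induction on $\mathcal S$. Freeness of $\FN\acts T$ rules out axial components throughout (no subgroup of $\FN$ is free abelian of rank $\ge 2$), so the process yields a finite family of minimal components, each of \emph{thin (Levitt) type} or --- after the new Rauzy--Veech induction --- of \emph{surface type}. A surface-type component is reduced by Rauzy--Veech moves to a minimal interval exchange, i.e. a minimal measured foliation on a surface with boundary: its minimal lamination is the foliation itself, and the remaining leaves are the finitely many separatrix pairings at the finitely many singularities, all isolated. A thin-type component is analysed directly on $\Omega$: the terminal thin system is minimal, its tree is the corresponding piece of $T$, and again the only non-recurrent leaves are branch-point diagonal leaves. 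In every case the component contributes a sublamination that is minimal up to diagonal leaves, with $L_0$ its set of non-isolated leaves; here one uses that a dual lamination of a free tree with dense orbits has no periodic leaf, so a minimal component has no isolated leaf, and that the diagonal leaves produced at branch points and singularities are isolated.

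\emph{The two implications.} $(\Rightarrow)$ If $T$ is indecomposable it does not split as a non-trivial graph of actions, so $\mathcal S$ cannot split into two or more components; the inductive procedure then returns a single minimal component and $\Omega=K_A$. By the previous paragraph $L(T)$ is minimal up to diagonal leaves and $L_0$ is its set of non-isolated leaves, that is $L(T)'$. $(\Leftarrow)$ If $L(T)$ is minimal up to diagonal leaves, then $L(T)'$ is its unique minimal sublamination and carries a minimal system of isometries on $K_A$ whose tree is $T$ (the diagonal leaves add nothing to the tree), so $\mathcal S$ may be taken minimal on $K_A$. Given non-degenerate segments $I,J\subseteq T$, translate them into $K_A$ (the $\FN$-translates of $K_A$ cover $T$) and, using minimality of the pseudogroup on the arcwise-connected compact set $K_A$ together with compactness, cover $I$ by a finite chain of $\mathcal S$-translates of $J$ with non-degenerate consecutive overlaps --- the standard argument for mixing of a minimal interval exchange. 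Pushing this chain back into $T$ produces the elements $u_1,\dots,u_n$ of the definition, so $T$ is indecomposable.

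\emph{Main obstacle.} The heart of the matter is the surface-type case and the control of Rauzy--Veech induction: one must show it applies to the pieces output by Rips induction, that it terminates, and that its terminal minimal interval exchange faithfully records $L(T)$ --- in particular that the discarded leaves form exactly finitely many $\FN$-orbits of diagonal leaves, in bijection with the branch points. A secondary difficulty is the dictionary between ``$T$ indecomposable'' and ``$\mathcal S$ minimal with one component'': the band complex carries strictly more data than $T$ (the set $K_A\ssm\Omega$ and a possible non-geometric defect), and one must verify these contribute only isolated diagonal leaves, hence neither obstruct nor counterfeit the indecomposability of $T$.
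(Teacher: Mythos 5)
Your overall strategy (compact heart, Rips induction in the Levitt case, a Rauzy--Veech-type induction in the surface case, separating a minimal part from finitely many diagonal leaves) matches the paper's, but the proposal has genuine gaps at each of the points where the actual work happens. First, you reduce the surface-type case to ``a minimal interval exchange, i.e.\ a minimal measured foliation on a surface with boundary.'' This is not available: ``surface type'' here only means that the Rips machine halts, equivalently $\ind_{Q}(T)=2N-2$; the underlying forest is a compact $\R$-tree whose bases are subtrees, not subintervals, and a \emph{free} surface-type tree is typically non-geometric, hence not dual to any surface foliation (indeed the paper's Proposition~\ref{prop:splittingPointExist} derives a contradiction precisely from the assumption that the system behaves like a geometric one). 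The entire content of Section~\ref{S.Splitting} is to define a splitting move on such tree systems and to prove splitting points exist; you cannot import the classical interval-exchange theory wholesale. Second, you take as a ``general fact'' that the non-recurrent leaves are finitely many orbits of diagonal leaves. Finiteness of the orbits of points with $|(\CQ^2)^{-1}(x)|>1$ is indeed from \cite{CH10}, but diagonality over the regular part is exactly Proposition~\ref{P.Diag}, proved via connectivity of Whitehead graphs (Lemma~\ref{L.Whitehead}), which itself \emph{uses} indecomposability through Proposition~\ref{P.Indecomp}; for a decomposable tree this diagonality fails, and that failure is how the converse is proved. Third, minimality of $L_r(T)$ is never actually argued: the paper's mechanism (a proper sublamination of $L_r(T)$ must miss some long leaf segment avoiding the boundedly many branch vertices of $\Gamma_i$, hence lives in a proper subgraph, hence is carried by a proper free factor, contradicting Proposition~\ref{P.Indecomp}) has no counterpart in your text.

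The converse implication is also gapped. From ``$L(T)$ minimal up to diagonal leaves'' you pass to ``the pseudogroup is minimal on $K_A$'' and then invoke a chain-covering argument ``as for a minimal interval exchange.'' Even granting minimality of the pseudogroup, covering a segment $I$ by finitely many translates $u_iJ$ with \emph{non-degenerate consecutive overlaps} is precisely the extra strength of indecomposability over mixing (see the hierarchy in Section~\ref{SS.Mixing}); a cover of an arc by subarcs need not admit such a chain, and in a tree the translates $u_iJ$ may meet $I$ or each other only in points. The paper instead proves the contrapositive (Lemma~\ref{L.NearlyMin}): a decomposable free tree admits a transverse family $\{gT_0\}$ with $\Stab(T_0)$ a proper free factor, and one constructs, by a case analysis on whether distinct translates of $\bar T_0$ meet, an explicit leaf of $L(T)$ that is not diagonal over the sublamination generated by $L(T_0)$. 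Some argument of this kind is needed; the direct route you sketch does not close.
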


The third author \cite{R10b} proved that under the same hypotheses, $T$ is indecomposable if and only if no leaf of the dual lamination $L(T)$ is carried by a finitely generated subgroup of infinite index. We use this characterization in the proof of Theorem~A.

Theorem~A generalizes the main result of \cite{KL11}.  Indeed, it is
shown in \cite{CH10} that the repelling tree $T_{\Phi\inv}$ of a fully irreducible (iwip) outer
automorphism $\Phi$ of $\FN$ is indecomposable, and $T_{\Phi\inv}$ is free
exactly when $\Phi$ is non-geometric. Moreover, the attracting
lamination $L_\Phi$ of $\Phi$ is minimal \cite{BFH97} and contained in the dual
lamination of the repelling tree (see  \cite{CH10}).

\begin{cor}[\cite{KL11}]
  Let $\Phi$ be a non-geometric iwip outer automorphism of $\FN$. The
  dual lamination of the repelling tree $L(T_{\Phi\inv})$ is the
  diagonal closure of the attracting lamination $L_\Phi$.\qed
\end{cor}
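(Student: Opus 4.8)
The plan is to read the corollary off Theorem~A, using the facts about iwip automorphisms recalled just before it. Write $T := T_{\Phi\inv}$ for the repelling tree and $L := L(T)$ for its dual lamination.

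First I would check that $T$ satisfies the hypotheses of Theorem~A. The action $\FN \acts T$ is minimal and has dense orbits --- this is the standard picture for the attracting/repelling trees of an iwip automorphism (see \cite{CH10}) --- and since $\Phi$ is assumed \emph{non-geometric}, the action is in addition free, by \cite{CH10}. Moreover $T$ is indecomposable, again by \cite{CH10}. Thus Theorem~A applies and tells us that $L$ is minimal up to diagonal leaves, with unique minimal sublamination the derived lamination $L(T)'$.

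Next I would identify $L(T)'$ with the attracting lamination $L_\Phi$. By \cite{CH10} we know $L_\Phi \subseteq L$, and $L_\Phi$ is nonempty and minimal by \cite{BFH97}; so $L_\Phi$ is a minimal sublamination of $L$. Since Theorem~A gives that $L$ has exactly one minimal sublamination, namely $L(T)'$, we must have $L_\Phi = L(T)'$. (In detail: any nonempty sublamination of $L$ contains a minimal one --- a routine descending-chain argument using closedness and $\FN$-invariance --- hence contains $L(T)'$ by uniqueness; applied to $L_\Phi$ and combined with minimality of $L_\Phi$ this forces $L_\Phi = L(T)'$.)

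Finally I would conclude. By clause (ii) of ``minimal up to diagonal leaves'', every leaf of $L$ lies in $L(T)' = L_\Phi$ or is diagonal over $L_\Phi$; hence $L$ is contained in the diagonal extension of $L_\Phi$. Conversely, laminations dual to $\R$-trees are diagonally closed (recalled in the introduction), so $L$, containing $L_\Phi$, contains every leaf diagonal over $L_\Phi$; and $L$ is closed, so it contains the topological closure of that diagonal extension as well. Therefore $L = L(T_{\Phi\inv})$ is exactly the diagonal closure of $L_\Phi$. There is no real obstacle beyond Theorem~A itself; the only points needing care are the (standard) verification that $T_{\Phi\inv}$ has dense orbits and is free, so that Theorem~A applies, and the abstract lamination-theoretic step that the minimal sublamination furnished by Theorem~A is necessarily the already-known piece $L_\Phi$.
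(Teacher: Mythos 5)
Your argument is correct and is essentially the paper's own (the corollary is stated with a \qed precisely because it follows from Theorem~A together with the facts recalled in the preceding paragraph: indecomposability and freeness of $T_{\Phi\inv}$ for non-geometric iwips, and minimality and containment of $L_\Phi$, all from \cite{CH10} and \cite{BFH97}). Your extra steps --- noting that indecomposability forces dense orbits via the hierarchy of Section~\ref{SS.Mixing}, and that $L_\Phi$, being itself minimal, must coincide with the unique minimal sublamination $L(T)'$ --- are exactly the intended reading.
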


Recall that a current on $\FN$ is a positive $\FN$-invariant,
flip invariant, Radon measure $\mu$ on $\partial^2 \FN$ \cite{Kap06}. The support
$Supp(\mu)$ of $\mu$ is a lamination (see
\cite{CHL08c}). We consider the intersection form $\ikl$ between
currents and trees: $\ikl:\barcvn\times\text{Curr}(\FN)\to\R_{\geq 0}$ \cite{KL09a}. 
A current $\mu$ is orthogonal to a tree $T\in\barcvn$ (i.e $\ikl(T,\mu)=0$) if and only if the support of $\mu$ is a sublamination of the dual lamination $L(T)$ \cite{KL10d}. Theorem~A implies:

\begin{cor}
  Let $T\in\barcvn$ be a free, indecomposable $\FN$-tree, and let  $\mu$
  be a geodesic current. The following are equivalent:
\begin{enumerate}
\item[(i)] $Supp(\mu)\subseteq L(T)$,
\item[(ii)] $\ikl(T,\mu)=0$,
\item[(iii)] $Supp(\mu)=L(T)'$.\qed
\end{enumerate}
\end{cor}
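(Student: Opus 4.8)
The plan is to prove the three-way equivalence by noticing that two of the three implications are either cited or immediate, so that everything reduces to one substantive point. The equivalence (i)$\Leftrightarrow$(ii) is precisely the theorem of \cite{KL10d} recalled in the introduction, namely $\ikl(T,\mu)=0$ if and only if $Supp(\mu)\subseteq L(T)$; and (iii)$\Rightarrow$(i) is immediate, since $L(T)'\subseteq L(T)$ by definition of the derived sublamination. Hence it suffices to prove (i)$\Rightarrow$(iii): assuming $Supp(\mu)\subseteq L(T)$, show $Supp(\mu)=L(T)'$. Throughout I would take $\mu\neq 0$ — the zero current satisfies (i) and (ii) vacuously but not (iii), so ``current'' here must mean ``nonzero current'' — so that $Supp(\mu)$ is a nonempty lamination contained in $L(T)$.

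Next I would bring in Theorem~A. Recall first that an indecomposable tree is automatically minimal with dense orbits: a nondegenerate segment inside a proper $\FN$-invariant subtree would, via the covering property in the definition of indecomposability, drag every nondegenerate segment — hence all of $T$ — into that subtree; and covering a fixed segment by translates of an arbitrarily short segment around a point $x$ forces the orbit $\FN x$ to be dense. Thus Theorem~A applies to our free indecomposable $T$, so $L(T)$ is minimal up to diagonal leaves, its unique minimal sublamination is the derived sublamination $L(T)'$, and $L(T)\ssm L(T)'$ consists exactly of the leaves that are isolated in $L(T)$. Consequently, once we know that $Supp(\mu)$ contains no leaf isolated in $L(T)$, we get $Supp(\mu)\subseteq L(T)'$; being a nonempty sublamination of the minimal lamination $L(T)'$, it must then coincide with $L(T)'$, which is (iii). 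So the whole proof comes down to excluding isolated leaves from $Supp(\mu)$, and this last step is where I expect the real work to lie.

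For that step, suppose $l\in Supp(\mu)$ is isolated in $L(T)$. Since $Supp(\mu)\subseteq L(T)$, the leaf $l$ is isolated in $Supp(\mu)$ as well, so some open $U\ni l$ in $\partial^2\FN$ meets $Supp(\mu)$ only in $l$; then $U\ssm\{l\}$ is disjoint from $Supp(\mu)$ while $U$ is a neighbourhood of a point of the support, so $\mu(\{l\})=\mu(U)>0$: $\mu$ has an atom at $l$. By $\FN$-invariance every point of the orbit $\FN l$ is then an atom of $\mu$ of the same positive mass, and since $\mu$ is a Radon measure, $\FN l$ must be discrete and closed in $\partial^2\FN$. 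But if $l$ had trivial $\FN$-stabilizer, its bi-infinite geodesic would project to a bi-infinite reduced edge-path in the rose $R_N$ that traverses some edge infinitely often, so that infinitely many distinct translates of $l$ would all lie in a single compact cylinder of $\partial^2\FN$ — contradicting discreteness of $\FN l$. Hence $\Stab(l)$ is nontrivial, which forces the two ends of $l$ to be the fixed points of some $w\in\FN\ssm\{1\}$; so $l$ is carried by the cyclic subgroup $\langle w\rangle$, which is finitely generated and of infinite index (harmlessly assuming $N\geq 2$). Since $T$ is free and indecomposable, this contradicts the characterization of \cite{R10b} quoted above, by which no leaf of $L(T)$ is carried by a finitely generated subgroup of infinite index. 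Therefore $Supp(\mu)$ meets no leaf isolated in $L(T)$, and by the previous paragraph this finishes (i)$\Rightarrow$(iii) and the corollary. The main obstacle is the atom argument in this last paragraph — in particular pinning down that a current with an atom must have, in its support, a leaf carried by a cyclic subgroup, so that \cite{R10b} can be invoked.
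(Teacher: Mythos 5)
Your proof is correct and takes essentially the route the paper intends: the corollary is stated there without proof, as an immediate consequence of the quoted result of \cite{KL10d} (giving (i)$\Leftrightarrow$(ii)) and of Theorem~A (giving (i)$\Leftrightarrow$(iii) via uniqueness of the minimal sublamination $L(T)'$). Your filling-in of the one genuinely nontrivial step --- that an isolated leaf of $Supp(\mu)$ would be an atom, hence (by Radon plus $\FN$-invariance) a periodic leaf carried by a cyclic subgroup, contradicting Proposition~\ref{P.Indecomp} --- is sound, as is your observation that $\mu$ must implicitly be taken nonzero for (iii) to make sense.
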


Our strategy for proving Theorem~A is summarized as follows.  We encode a tree $T\in\barcvn$ in a system of isometries on a compact $\R$-tree as in \cite{CHL09}, and then develop an inductive procedure for studying such systems of isometries.  There are two cases to consider: if the tree $T$ is of Levitt type we use the Rips induction \cite{CH10}; if the tree $T$ is of surface type we use a splitting procedure inpired by the Rauzy-Veech induction for flat surfaces. The result is a sequence of systems of isometries encoding the tree $T$; there is a train track associated to each of these systems of isometries, and the dual lamination $L(T)$ is the inverse limit of this sequence of train tracks. We are able read off from the sequence of train tracks the desired minimality properties of the lamination.  The motivation for our procedure comes from surface
theory.

\subsection{The Case of a Surface Lamination}\label{SS.SurfaceCase}

The proof of Theorem~A relies on an adaptation to the free
group $\FN$ of \emph{train track expansions} of a measured lamination on a hyperbolic surface, which we now
casually recall.  First, we mention the correspondence between
measured laminations, interval exchanges, and train tracks, as this
correspondence suggests the procedure presented in this paper.
Let $\Sigma$ be a non-exceptional hyperbolic surface equipped with a
measured geodesic lamination $\mathscr{L}=(L,\mu)$.   We suppose that $L$ is minimal and filling. Choosing an arc $I
\subseteq \Sigma$ that is transverse to $L$, one can consider the
first return map $f:I \rightarrow I$ induced by the holonomy of $L$.
The map $f$ is a classical \emph{interval exchange transformation},
whose dynamics coincide with those of a finitely generated pseudogroup
$S$ of partial isometries on $I$: the generators of $S$ are the
maximal continuous restrictions of $f$.  Suspending $S$, see
Section~\ref{SS.Suspend}, gives a foliated space $\mathscr{S}$, which
is homeomorphic to $\Sigma$ with finitely many discs removed; after
gluing in these discs and extending the foliation to them in the
obvious way, one obtains a surface homeomorphic to $\Sigma$, carrying
a measured foliation which corresponds to measured lamination
$\mathscr{L}$; see \cite{Lev83}.
 
The suspension $\mathscr{S}$ of the interval exchange $f$ has the
homotopy type of a finite graph--collapsing each band of $\mathscr{S}$
onto one of its leaves gives the desired homotopy equivalence
$h:\mathscr{S} \rightarrow \Gamma$.  The graph $\Gamma$ can be
equipped with a \emph{train track structure} as follows: let $e$ and
$e'$ be (oriented) edges of $\Gamma$ such that the endpoint $v$ of $e$
coincides with the initial point of $e'$; make $e$ and $e'$ tangent at
the vertex $v$ if and only if there is a non-singular leaf of
$\mathscr{S}$ that crosses the (ordered) pair of bands corresponding
to $(e,e')$.  The widths of the bands of $\mathscr{S}$ give an
assignment of weights to the edges of $\Gamma$, which clearly satisfy
a \emph{switch condition}.  In fact, one sees that $\Gamma$ can be
embedded in $\Sigma$ as a measured train track \emph{carrying} $L$.

We now consider the effect of the \emph{Rauzy-Veech induction}.  Recall that
given an interval exchange transformation $f:I \rightarrow I$ with corresponding
partition $I=I_1 \cup ... \cup I_r$, one step of the Rauzy-Veech induction
consists of: first removing the right most interval $I_r$, then
replacing $f$ with the first return map $f'$ on the interval $I
\ssm I_r$.  Equivalently, consider the suspension $\mathscr{S}$,
and split apart the right-most singularity of the foliation on
$\mathscr{S}$ (all singularities lie on $I$) until $I$ is again
reached.  Rauzy-Veech induction is defined in the generic case, and extra
information is available when it is undefined; see
Proposition~\ref{prop:splittingPointExist}. This
process gives a foliated space $\mathscr{S}'$, which is the suspension
of the interval exchange transformation $f'$.  In the present note, we consider this
``splitting'' point of view of Rauzy-Veech induction.

As noted before, there is a measured train track $\Gamma$
corresponding to the space $\mathscr{S}$, such that $\Gamma$ carries
$L$.  If $\mathscr{S}'$ arises from $\mathscr{S}$ via one step of the
Rauzy-Veech induction, then we say that the measured train track $\Gamma'$
corresponding to $\mathscr{S}'$ arises from $\Gamma$ via
\emph{splitting}; there is a homotopy equivalence $\tau:\Gamma'
\rightarrow \Gamma$--$\tau$ is a \emph{fold}.  The sequence of
measured train tracks $(\Gamma_i)$ arising by iterating this procedure
is usually called a \emph{splitting sequence}, or a \emph{train track
  expansion} of $L$.  The key features for us are that each $\Gamma_i$
carries $L$ and that the branches of $\Gamma_i$ approximate leaves
of $L$ for $i >>0$.

\subsection{The Case of a Very Small $\FN$-Tree}

We now outline our translation of the above technology.  Let $T$ be an
$\R$-tree equipped with a free, minimal action of $\FN$ by isometries,
and suppose that the action $\FN \curvearrowright T$ is mixing
(see Section~\ref{SS.Mixing}).  Let $L(T)$ denote the dual
  lamination associated to $T$ (see Section~\ref{SS.Q}).  Choose a
basis $A$ for $\FN$, let $X_A$ denote the Cayley tree of $\FN$ with
respect to this basis, and identify $\partial^2 \FN = \partial^2 X_A$
in the obvious way.  Let $C_A$ denote the subset of $\partial^2 \FN$
consisting of points represented by lines in $X_A$ passing through the
identity element. The set $C_A$ is compact, hence $L_A:=L(T)
\cap C_A$ is compact as well.  As $L(T)$ is $\FN$-invariant, we may
consider the restricted action of $\FN$ on $L_A$; this turns out to be
a classical \emph{symbolic flow} (see \cite{CHL08a}).  We
``geometrize'' this flow using the tree $T$.

It is shown in \cite{CHL09} that there is an $\FN$-equivariant,
continuous map $Q^2:L(T) \rightarrow \overline{T}$, where
$\overline{T}$ denotes the metric completion of $T$; consider the
compact subspace $\Omega_A:=Q^2(L(T) \cap C_A)$--this is the
analogue of an arc transverse to a surface lamination.  From the
equivariance of $Q^2$, we get a partial action of $\FN$ on $\Omega_A$,
which is just the restriction of the action $\FN \curvearrowright T$.
It is shown in \cite{CH10} that there are two possibilites for the
structure of $\Omega_A$: either $\Omega_A$ is a finite union of
compact $\R$-trees, in which case $T$ is called \emph{surface type};
or $\Omega_A$ is totally disconnected, in which case $T$ is called
\emph{Levitt type}.  In either case the suspension of the (partial)
action of $\FN$ on $\Omega_A$ is a compact foliated space that is a
``geometric realization'' of $L_A$.  However, in the Levitt type case,
it is more convenient to work with a nicer space: put $K_A$ to be the
convex hull of $\Omega_A$ in $\overline{T}$ (so $K_A$ is a compact
$\R$-tree), and suspend the (partial) action of $\FN$ on $K_A$.  The
result is a foliated space $\mathscr{S}$ in which some leaves are 1-ended;
deleting all leaves with strictly less than two ends gives the suspension of
$\Omega_A$ (see Sections~\ref{SS.Q2}, \ref{SS.Types},
\ref{SS.Suspend}, and \ref{SS.Rips}).

We now describe our generalization of Rauzy-Veech induction; our induction
has two distinct procedures.  In the case that $T$ is surface type, we
show (Proposition~\ref{prop:splittingPointExist}) that there exist in the
suspension $\mathscr{S}$  singularities which look like
singularities which arise in foliated surface, \emph{i.e.} they are
``splittable''; splitting apart these singularities as above gives a
homotopy equivalence $h_1:\mathscr{S}' \rightarrow \mathscr{S}$ ($h_1$
``zips up'' the splitting).  There is a homotopy equivalence
$h:\mathscr{S} \rightarrow \Gamma$ to a finite graph $\Gamma$ got by
collapsing each band onto a vertical fiber, and splitting induces a
homotopy equivalence $\Gamma' \rightarrow \Gamma$, where $\Gamma'$ is
the graph associated to $\mathscr{S}'$ (see
Section~\ref{S.Splitting}).

In the case that $T$ is Levitt type, we operate on $\mathscr{S}$ using
the \emph{Rips machine}: erase from $K_A$ all points $x$ such that $x$
is an endpoint of a leaf to get $K_A' \subseteq \Omega_A$, and suspend
$K_A'$ get $\mathscr{S}'$.  Again there are homotopy equivalences $\mathscr{S}
\rightarrow \Gamma$, $\mathscr{S}' \rightarrow \Gamma'$, and $\Gamma'
\rightarrow \Gamma$.

In either case, we get a sequence of foliated spaces $(\mathscr{S}_i)$
and graphs $\Gamma_i$ with homotopy equivalences $\tau_i:\Gamma_i
\rightarrow \Gamma_{i-1}$.  Any sublamination $L_0 \subseteq L(T)$
defines a \emph{train track structure} on $\Gamma_i$ just as in
Section~\ref{SS.SurfaceCase}, where leaves in $L_0$ are treated as
``non-singular.''  As one might expect, choosing too large of a
sublamination $L_0$ gives too many legal turns; however, it follows
from the results of this paper that for $T$ \emph{indecomposable} (see
Section~\ref{SS.Mixing}), the derived set $L(T)'$ consists of leaves
that are morally non-singular; see Sections~\ref{SS.Derived} and
\ref{SS.TrainTracks}.  As before, the main features are that the train tracks 
$\Gamma_i$ all carry $L(T)$ and that for $i>>0$, the branches
of $\Gamma_i$ ``approximate'' leaves of $L(T)'$.  Our train-tracks will have 
no transverse measure; however, the
informed reader will realize that currents are the natural
object for measuring these train tracks.  This idea will be explored
in future work.

\medskip

\noindent{\bf Acknowledgments:} This project began during the AIM
workshop ``The geometry of the outer automorphism group of a free
group,'' which was held in October 2010; we thank the
organizers of that conference as well as the American Institute of
Mathematics.

\section{Background}

In this section we briefly review the relevant definitions around
$\mathbb{R}$-trees, Outer space, and  laminations.  In what
follows $\FN$ denotes the free group of rank $N$.

\subsection{Basics About $\mathbb{R}$-Trees}

A metric space $(T,d)$ is an $\mathbb{R}$-\textbf{tree} (or just a
\textbf{tree}) if for any two points $x,y \in T$, there is a unique
topological arc $p_{x,y}:[0,1] \rightarrow T$ connecting $x$ to $y$,
and the image of $p_{x,y}$ is isometric to the segement $[0,d(x,y)]$.
As is usual, we let $[x,y]$ stand for Im$(p_{x,y})$, and we call
$[x,y]$ the \textbf{segment} (also called an \textbf{arc}) in $T$ from
$x$ to $y$.  A segment is \textbf{non-degenerate} if it contains
strictly more than one point.  We let $\overline{T}$ stand for the
metric completion of $T$.  Unless otherwise stated, we regard $T$ as a
topological space with the metric topology.  If $T$ is a tree, and $x
\in T$, then $x$ is a \textbf{branch point} if the cardinality of
$\pi_0(T - \{x\})$ is strictly greater than two.  For $x \in T$, the
elements of $\pi_0(T- \{x\})$ are the \textbf{directions} at $x$.

In this paper, all the trees we consider are equipped with an
isometric (left) action of $\FN$, i.e. a group morphism $\rho: \FN
\rightarrow$ Isom$(T)$; as usual, we supress the morphism
$\rho$ and identify $\FN$ with $\rho(\FN)$.  A tree $T$ equipped with
an isometric action will be called an $\FN$-\textbf{tree}, and we
denote this situation by $\FN \curvearrowright T$.  Notice that an
action $\FN \curvearrowright T$ induces an action of $\FN$ on the sets
of directions and branch points of $T$.  We identify two $\FN$-trees
$T,T'$ if there is an $\FN$-equivariant isometry between them.

There are two sorts of isometries of trees: an isometry $g$ of $T$ is
\textbf{elliptic} if $g$ fixes some point of $T$, while an isometry
$h$ of $T$ is \textbf{hyperbolic} if it is not elliptic.  Any
hyperbolic isometry $h$ of $T$ leaves invariant a unique isometric
copy of $\mathbb{R}$ in $T$ which is the \textbf{axis}, $A(h)$, of
$h$.  If $g$ is an elliptic isometry, we let $A(g)$ stand for the
fixed point set of $g$, \emph{i.e.} $A(g):=\{x \in T|gx=x\}$.  The
\textbf{translation length function} of a $\FN$-tree $T$ is $l_T:\FN
\rightarrow \mathbb{R}$, where
\[
l_T(g):=\inf \{d(x,gx)|x \in T\}.
\]
The number $l_T(g)$ is the \textbf{translation length} of $g$, and for
any $g \in \FN$, the infimum is always realized on $A(g)$, so that $g$
acts on $A(g)$ as a translation of length $l_T(g)$.  If $H \leq \FN$
is a finitely generated subgroup containing a hyperbolic isometry,
then $H$ leaves invariant the set
\[
T_H^{min}:=\cup_{l_T(h)>0} A(h).
\]
which is a subtree of $T$, and is minimal in the set of $H$-invariant
subtrees of $T$; $T_H^{min}$ is the \textbf{minimal invariant subtree
  for} $H$.  In the case that $H=\FN$, we omit $H$ and write $T^{min}$
for $T_{\FN}^{min}$.  An action $\FN \curvearrowright T$ is
\textbf{minimal} if $T=T^{min}$; a minimal action $\FN
\curvearrowright T$ is \textbf{non-trivial} if $T$ contains strictly
more than one point.

\subsection{System of isometries}\label{SS.Systems}

A metric space $F$ is a \textbf{finite forest} if $F$ has finitely
many connected components, each of which is a compact $\R$-tree.  A
\textbf{partial isometry} $a$ of $F$ is an isometry between two closed
(hence compact) subtrees of $F$.

A \textbf{system of isometries} is a pair $S=(F,A)$ where $F$ is a
finite forest and $A$ is a finite collection of non-empty partial
isometries of $F$. By allowing inverses and compostion we get a 
pseudo-group of partial isometries on $F$ which we write on the right. 

To a system of isometries $S=(F,A)$ we associate a graph $\Gamma$
whose vertices are the connected components of $F$ and such that for
each partial isometry $a\in A$ there is an oriented edge starting at
the connected component of $F$ containing the domain, $\dom (a)$, and
ending at the connected component of $F$ containing the image of $a$.
Denote by $V(\Gamma)$ and $E(\Gamma)$ the sets of vertices and edges
of $\Gamma$, respectively.

By a \textbf{path} in $\Gamma$, we mean a finite \textbf{edge path},
that is, a path starting and ending at vertices of $\Gamma$.  A path
$\gamma$ in $\Gamma$ is called \textbf{reduced} if $\gamma$ is an
immersion; any path is homotopic relative to endpoints to a reduced
path.  A reduced path $\gamma$ in $\Gamma$ defines a partial isometry
of $F$, and we say that $\gamma$ is \textbf{admissible} if the
corresponding partial isometry has non-empty domain. We abuse
notation, identifying an admissible path $\gamma$ with the partial
isometry corresponding to it.  An \textbf{infinite reduced path}
$\gamma$ is an immersion $\gamma:\mathbb{R}_{\geq 0}\rightarrow
\Gamma$ such that $\gamma^{-1}(V(\Gamma))=\mathbb{N}$.  For an
infinite reduced path $\gamma$, the \textbf{$i$-prefix} of $\gamma$,
denoted $\gamma_i$, is the restriction $\gamma|_{[0,i]}$.  For $i \geq
j$, one has that $\dom(\gamma_i)\subseteq\dom(\gamma_j)$, and we put
$\dom(\gamma) :=\cap_i \dom(\gamma_i)$. We say that an infinite path
$\gamma$ is \textbf{admissible} if $\dom(\gamma) \neq \emptyset$. A
\textbf{bi-infinite reduced path} $\gamma$ is an immersion
$\gamma:\mathbb{R}\rightarrow \Gamma$ such that
$\gamma^{-1}(V(\Gamma))=\mathbb{Z}$.  Given a bi-infinite path
$\gamma$, the \textbf{halves} of $\gamma$ are the restrictions
$\gamma^-:=\gamma|_{\mathbb{R}_{\leq 0}}$ and
$\gamma^+:=\gamma|_{\mathbb{R}_{\geq 0}}$; we reparametrize $\gamma^-$
in order to regard it as an infinite path.  A bi-infinite reduced path
is \textbf{admissible} if the domains of its halves have non-empty
intersection.  Equivalently a bi-infinite path $\gamma$ is admissible
if and only if for any $i \leq j \in \mathbb{Z}$, the restriction
$\gamma|_{[i,j]}$ is admissible.

The set of bi-infinite admissible paths is called the
\textbf{admissible lamination} of the system of isometries $S=(F,A)$
and is denoted $L(S)$.

\subsection{The Suspension and the Dual Tree}\label{SS.Suspend}

Let $S=(F,A)$ be a system of isometries, and let $I=[0,1]$ denote the unit
interval.  For each $a_i \in A$, let $b_i:=\dom(a_i)$; one forms a
\textbf{band} $B_i:=b_i \times I$.  Identify $b_i$ with $b_i \times
\{0\} \subseteq B_i$, and denote $\tilde{b_i}:=b_i \times \{1\}$.  Say
that $b_i$ and $\tilde{b_i}$ are the \textbf{bases} of the band $B_i$.

\begin{defn}
  The \textbf{suspension} $\mathscr{S}$ of $S$ is the quotient of $F
  \amalg \coprod_i B_i$, where one identifies $b_i$ with $\dom(a_i)$
  and $\tilde{b_i}$ with $\text{im}(a_i)=\dom(a_i)\cdot a_i$.
\end{defn}

The suspension of a system of isometries is a compact, Hausdorff space
that has the homotopy type of a finite graph $\Gamma$. The graph
$\Gamma$ is a deformation retract of $\mathscr{S}$ obtained by
contracting each connected component of the forest $F$ to a point and
each band $b_i \times I$ to a core $\{pt\}\times I$.  The graph
$\Gamma$ is the graph associated to the system of isometries in
Section~\ref{SS.Systems}.  In the sequel, we always suppose that
$\mathscr{S}$ is connected.  Hence, $\pi_1(\mathscr{S})$ is a free
group $\FN$.

We think of each band as being \textbf{foliated} by leaves of the form
$\{pt\} \times [0,1]$, and the foliations of the bands of
$\mathscr{S}$ give rise to a foliation on $\mathscr{S}$: define a
relation $R$ on points of $\mathscr{S}$ by declaring $xRy$ if and only
if $x$ and $y$ lie in the same leaf of the foliation on some band.
The classes of the smallest equivalence relation containing $R$ are
the \textbf{leaves} of the foliation on $\mathscr{S}$.  For a point $x
\in F$, we let $l(x)$ denote the leaf of the foliation on
$\mathscr{S}$ containing $x$.

We consider the path metric on leaves coming from the metric on
$[0,1]$.  A finite, infinite, or bi-infinite path $\gamma$ in
$\mathscr{S}$ is an \textbf{admissible leaf path} if $\gamma:
J\rightarrow \mathscr{S}$, $J$ a closed subinterval of $\R$ with
extremities $\partial J\subseteq \mathbb{Z}\cup\{\pm\infty\}$, is a
locally isometric, immersed leaf path, so $\gamma^{-1}(F) = J\cap
\mathbb{Z}$.  The \textbf{lamination} (which is rather a foliation in
this setting) $L(\mathscr{S})$ is the set of bi-infinite admissible
leaf paths.

Any admissible leaf path $\gamma$ defines an admissible path in
$\Gamma$ which we also denote by $\gamma$. Any bi-infinite admissble
leaf path in $L(\mathscr{S})$ defines a bi-infinite admissible path in
the admissible lamination $L(S)$.  Thus, the admissible lamination
$L(S)$ associated to a system of isometries $S=(F,A)$ is a
combinatorial version of the foliation on the suspension
$\mathscr{S}$.

A \textbf{length measure} $\mu$ on a tree $T$ is a collection
$\{\mu_I\}_{I\subseteq T}$ of finite Borel measures on the compact
arcs $I$ of $T$, such that if $J \subseteq I$, then $\mu_J=\mu_I|_J$.
For any tree $T$, we have the \textbf{Lebesgue measure} $\mu_L$ on $T$
consisting of the Lebesgue measures on the compact intervals of $T$.
The foliated space $\mathscr{S}$ inherits a \textbf{transverse measure}
from the Lebesgue measures on the bases. 

Let $S=(F,A)$ be a system of isometries, and let $\tilde{\mathscr{S}}$
denote the universal cover of $\mathscr{S}$.  Lift the foliation and
transverse measure from $\mathscr{S}$ to $\tilde{\mathscr{S}}$.  Then
$\FN=\pi_1(\mathscr{S})$ acts on $\tilde{\mathscr{S}}$ by deck
transformations, and the foliation and transverse measure are
preserved.  Collapsing to a point each leaf of the foliation of
$\tilde{\mathscr{S}}$ gives an $\R$-tree $T_{\mathscr{S}}$.  As the
action $\FN \curvearrowright \tilde{\mathscr{S}}$ preserves the
foliation and transverse measure, one gets an isometric action $\FN
\curvearrowright T_{\mathscr{S}}$; see \cite{CH10} or \cite{BF95}.
The action $\FN \curvearrowright T_{\mathscr{S}}$ is \textbf{dual} to
the system of isometries $S$.

\subsection{Outer Space and its Closure}

Recall that an action $\FN \curvearrowright T$ is \textbf{free} if for
any $1 \neq g \in \FN$ one has $l_T(g) > 0$.  If $X \subseteq T$, then
the \textbf{stabilizer} of $X$ is $\Stab(X):=\{g \in \FN|gX=X\}$  ---  the
setwise stabilizer of $X$.  An action $\FN
\curvearrowright T$ is \textbf{very small} if:

\begin{enumerate}
 \item [(i)] $\FN \curvearrowright T$ is minimal,
 \item [(ii)] for any non-degenerate arc $I \subseteq T$, $\Stab(I) =
   \{1\}$ or $\Stab(I)$ is a maximal cyclic subgroup of $\FN$,
 \item [(iii)] stabilizers of tripods are trivial.
\end{enumerate}
 
A minimal action $\FN \curvearrowright T$ is \textbf{discrete}
(or \textbf{simplicial}) if the $\FN$-orbit of any point of $T$ is a
discrete subset of $T$; in this case $T$ is obtained by equivariantly
assigning a metric to the edges of a (genuine) simplicial tree. Note
that the metric topology is weaker than the simplicial topology if the
tree is not locally compact.

The \textbf{unprojectivised Outer Space} of rank $N$, denoted $\cvn$,
is the topological space whose underlying set consists of free,
minimal, discrete, isometric actions of $\FN$ on $\mathbb{R}$-trees. A
minimal $\FN$-tree is completely determined by its translation length
function (see, for example, \cite{Chi}): we can embed $\cvn
\subseteq\mathbb{R}^{\FN}$. The closure $\barcvn$ in $\R^{\FN}$ consists
of very small isometric actions of $\FN$ on $\mathbb{R}$-trees
\cite{CL95, BF94}.  For more background on $\cvn$ and its closure, see
\cite{Vog02} and the references therein.

\subsection{The Map $Q$ and the Dual Lamination}\label{SS.Q}

Here we recall dual algebraic laminations associated to $\FN$-trees;
see \cite{CHL08a} and \cite{CHL08b} for a careful development of the
general theory.  Let $\partial \FN$ denote the Gromov boundary of
$\FN$ --- \emph{i.e.} the Gromov boundary of any Cayley graph of $\FN$;
let $\partial^2(\FN):=\partial \FN \times \partial \FN\ssm\Delta$,
where $\Delta$ is the diagonal.  The left action of $\FN$ on a Cayley
graph induces actions by homeomorphisms of $\FN$ on $\partial \FN$ and
$\partial^2 \FN$.  Let $i: \partial^2 \FN \rightarrow \partial^2 \FN$
denote the involution that exchanges the factors.  A
\textbf{lamination} is a non-empty, closed, $\FN$-invariant,
$i$-invariant subset $L\subseteq \partial^2 \FN$.

\begin{rem}
  In the setting of Sections~\ref{SS.Systems} and \ref{SS.Suspend}, if
  the graph $\Gamma$ is connected, then its fundamental group is a free
  group $\FN$.  Specifying a marking isomorphism
  $\pi_1(\Gamma)\simeq\FN$ gives a homeomorphism
  $\partial\tilde\Gamma\simeq\partial\FN$, where $\tilde\Gamma$ is the
  universal cover of $\Gamma$. A bi-infinite reduced path in $\Gamma$
  lifts to bi-infinite reduced paths in $\tilde\Gamma$ that are
  completly described by their pairs of ends in $\partial^2\FN$. We
  regard $L(S)$ and $L(\mathscr{S})$ as laminations.
\end{rem}

\begin{prop}[\cite{LL03}]\label{P.DefQ}
  Let $T \in \barcvn$ have dense orbits, and suppose that $X
  \in \partial \FN$. There is a unique point $Q(X) \in
  \overline{T}\cup\partial T$ such that there exists a sequence $u_n$
  in $\FN$ converging to $X$ and a point $P\in T$ such that $u_nP$
  converges to $\CQ(X)$.\qed
\end{prop}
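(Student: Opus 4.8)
The plan is to fix a basis $A$ of $\FN$ and identify $\partial\FN$ with the set of reduced infinite words over $A^{\pm1}$, writing $X=x_1x_2x_3\cdots$ and setting $u_n:=x_1\cdots x_n$, so that $u_n\to X$ and $u_m^{-1}u_n$ is reduced whenever $m\le n$. Put $M:=\max_{a\in A}d(P,aP)$; since $u_n$ is an isometry, $d(u_nP,u_{n+1}P)=d(P,x_{n+1}P)\le M$, so successive terms of $(u_nP)_n$ move a uniformly bounded amount. I would then reduce the statement to showing that this particular sequence $(u_nP)_n$ converges in $\overline T\cup\partial T$, taken with the observers' topology (with respect to which this space is compact), and afterwards check that the limit does not depend on the basis, on $P$, or on the approximating sequence; that common limit is $Q(X)$.

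For convergence of the prefix sequence, compactness supplies accumulation points, and the task is to see that there is only one. I would split into the case where $d(P,u_nP)$ stays bounded and the case where it is unbounded. In the bounded case all terms lie in a fixed metrically closed ball, and I would argue the sequence is Cauchy; in the unbounded case I would show $d(P,u_nP)\to\infty$ along the whole sequence, so that the bridge points $c_n$ (the median of $P$, $u_nP$, $u_{n+1}P$) recede from $P$, the nested segments $[P,c_n]$ exhaust a geodesic ray $\rho$ at $P$, and $(u_nP)_n$ converges to the endpoint of $\rho$ in $\partial T$. Both cases rest on a \emph{non-oscillation lemma}: there is no $y\in\overline T$ and no two distinct directions $\delta_1,\delta_2$ at $y$ with $u_nP$ in direction $\delta_1$ infinitely often and in direction $\delta_2$ infinitely often; equivalently, for each ``observer'' $z\in T$ the direction in which $[z,u_nP]$ leaves $z$ is eventually constant. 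The observers'-topology formulation is what makes this suffice, since convergence is then checked one observer at a time.

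The hard part will be the non-oscillation lemma, and this is where the hypothesis of dense orbits is essential. If $(u_nP)$ oscillated around a point $y$, then for suitable indices $n_k<m_k<n_{k+1}$ the points $u_{n_k}P$ would cluster on one side of $y$ and the $u_{m_k}P$ on the other, so a fixed nondegenerate arc $I$ through $y$ would be approximated, after translating back by $u_{n_k}^{-1}$ and $u_{m_k}^{-1}$, by segments $[P,(x_{n_k+1}\cdots x_{m_k})P]$ and $[P,(x_{m_k+1}\cdots x_{n_{k+1}})P]$ carrying longer and longer reduced labels with longer and longer common prefixes forced by $X$. I would use dense orbits — so that $T$ has no simplicial part and every direction at $y$ is realized by translates of $P$ — to upgrade this into a family of group elements shuffling the arc $I$ by arbitrarily large amounts, contradicting that $T$ is very small (trivial tripod stabilizers, at most maximal-cyclic arc stabilizers) together with minimality of the action. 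Granting the lemma, both cases of the previous paragraph go through.

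For independence of the choices, given any $v_n\to X$ in $\FN$ and any $P'\in T$, the element $v_n$ shares with $X$ a common prefix of length $k(n)\to\infty$, so $v_n=u_{k(n)}w_n$ with $w_n$ reduced; I would compare $v_nP'$ with $u_{k(n)}P$ through the same bridge/observer analysis, again invoking dense orbits to control how the tail $w_nP'$ interacts with the ray or limit point already produced, and conclude that $v_nP'$ accumulates only on $Q(X)$. This yields the uniqueness asserted in the proposition.
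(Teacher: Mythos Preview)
The paper does not prove this proposition; it simply cites \cite{LL03} and writes \qed.  So there is nothing in the paper to compare your argument against.  That said, your outline is in the right spirit --- passing to the observers' topology on $\widehat T$ and reducing everything to a non-oscillation statement is exactly how one should organize the proof --- but the sketch you give for the non-oscillation lemma has a genuine gap.  Oscillation of $(u_nP)$ between two directions at a point $y$ does not in any visible way produce nontrivial arc or tripod stabilizers, so there is no contradiction with very-smallness to be extracted along the lines you indicate; the phrases ``shuffling the arc $I$ by arbitrarily large amounts'' and ``contradicting that $T$ is very small'' are doing work that has not been justified.

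The missing tool is \emph{bounded backtracking} (BBT), which is what \cite{LL03} actually uses.  For $T\in\barcvn$ with dense orbits, one builds an $\FN$-equivariant map $f$ from the Cayley tree $X_A$ to $\overline T$ (send $1\mapsto P$, extend equivariantly on vertices and linearly on edges) and proves there is a constant $C>0$ such that for every segment $[p,q]\subseteq X_A$ the image $f([p,q])$ lies in the $C$-neighbourhood of $[f(p),f(q)]$ in $T$.  Once you have BBT, non-oscillation is immediate: the prefixes $1,u_1,u_2,\ldots$ lie on a single geodesic ray in $X_A$, so if some $u_nP$ enters a direction $\delta$ at $y$ at depth greater than $C$, every later $u_mP$ must stay in $\delta$ (otherwise $f$ would backtrack by more than $C$ across $y$).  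This handles both your bounded and unbounded cases at once and also makes the independence-of-choices step routine, since any other $v_n\to X$ eventually fellow-travels the prefix ray in $X_A$ and BBT transports that to $T$.  The proof that dense orbits implies BBT is a short volume-type argument (originating in Gaboriau--Levitt and recalled in the CHL papers); that is the step where the dense-orbits hypothesis is actually consumed.
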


More intuitively, the map $\CQ$ is the continuous extension of the map
$\CQ_P:\FN\to T$, $u\to uP$ to $\partial\FN\to\Thatobs$, where
$\That=\bar T\cup\partial T$ is endowed with the weaker observers'
topology--the set of directions in $\That$ is a basis of open sets for the
observers' topology. The space $\Thatobs$ is Hausdorff and compact.

\begin{prop}[\cite{LL03}]\label{prop:Qonto}
  Let $T \in \barcvn$ have dense orbits. The map
  $\CQ:\partial\FN\to\That$ is $\FN$-equivariant and surjective; further, points in
  $\partial T$ have exactly one pre-image by $\CQ$.\qed
\end{prop}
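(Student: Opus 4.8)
The plan is to prove the three assertions --- equivariance, surjectivity, and injectivity of $\CQ$ over $\partial T$ --- in turn; the first two are formal, and the third is the substantive point. \emph{Equivariance} is immediate from the uniqueness clause of Proposition~\ref{P.DefQ}: if $g\in\FN$, $X\in\partial\FN$, and $u_n\to X$ in $\FN$ with $u_nP\to\CQ(X)$ in $\Thatobs$, then $gu_n\to gX$ and $gu_nP\to g\CQ(X)$ (since $\FN$ acts on $\Thatobs$ by homeomorphisms), so $\CQ(gX)=g\CQ(X)$; equivalently, $\CQ$ is the continuous extension of the equivariant map $u\mapsto uP$, hence equivariant.

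For \emph{surjectivity}, observe first that $\CQ(\partial\FN)$ is closed in $\Thatobs$, being the continuous image of the compact space $\partial\FN$ in the compact Hausdorff space $\Thatobs$. It therefore suffices to show $T\subseteq\CQ(\partial\FN)$, since every $\xi\in\partial T$ is an observers'-limit of points of $T$ (if $r\colon[0,\infty)\to\bar T$ is the geodesic ray from $P$ to $\xi$, then $r(s)\to\xi$ in $\Thatobs$ as $s\to\infty$). To see $T\subseteq\CQ(\partial\FN)$, fix $z\in T$: since $T$ is a non-degenerate $\R$-tree it has no isolated points, and since the action has dense orbits $\FN P$ is dense in $T$; hence there are pairwise distinct $u_n\in\FN$ with $u_nP\to z$ in $T$, and so in the coarser observers' topology. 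Passing to a subsequence, $u_n\to X\in\partial\FN$, and then $\CQ(X)=z$ by the very definition of $\CQ$. Thus $\CQ(\partial\FN)=\That$.

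\emph{Injectivity over $\partial T$} is the heart of the matter, and I would argue by contradiction. Suppose $X\ne X'$ with $\CQ(X)=\CQ(X')=\xi\in\partial T$. Using equivariance, translate the bi-infinite geodesic of $X_A$ joining $X'$ to $X$ so that it passes through $1$, and write its vertices as $(y_k)_{k\in\mathbb Z}$ with $y_0=1$, $y_k\to X$ as $k\to+\infty$, $y_k\to X'$ as $k\to-\infty$; consecutive $y_k$ differ by a letter, and the two rays issuing from $y_0$ begin with distinct letters (the geodesic is reduced at $y_0$). Put $P_k:=y_kP$ and $M:=\max_{a\in A}d_T(P,aP)$, so $P_0=P$ and $d_T(P_k,P_{k+1})\le M$. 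Let $r\colon[0,\infty)\to\bar T$ be the ray from $P$ to $\xi$. Since $P_k\to\xi$ in $\Thatobs$ both as $k\to+\infty$ and as $k\to-\infty$, and since for $t>0$ the component of $\That\ssm\{r(t)\}$ containing $\xi$ is an observers' neighbourhood of $\xi$ missing $P$, we get $r(t)\in[P,P_k]$ for all sufficiently large $|k|$. Hence for $t>M$ there are indices $k_-(t)<0<k_+(t)$ with $r(t)\notin[P,P_{k_\pm(t)}]$ while $r(t)\in[P,P_{k_-(t)-1}]$ and $r(t)\in[P,P_{k_+(t)+1}]$; then $P_{k_\pm(t)}$ and its neighbour lie on opposite sides of $r(t)$, so $d_T(P_{k_\pm(t)},r(t))\le M$, and thus $d_T(P_{k_-(t)},P_{k_+(t)})\le 2M$. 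On the other hand the sub-geodesic of $X_A$ from $y_{k_-(t)}$ to $y_{k_+(t)}$ passes through $y_0=1$, so under $u\mapsto uP$ it gives a broken path from $P_{k_-(t)}$ to $P_{k_+(t)}$ running through $P$, a point at distance $\ge t-M$ from both of its endpoints --- for $t$ large, an excursion of arbitrarily large length, traced out and back. Using that the two rays out of $y_0$ begin with distinct letters (so that $y_{k_-(t)}^{-1}y_{k_+(t)}$ is a reduced product of two long pieces whose actions on $P$ nearly cancel), one extracts from these configurations, as $t\to\infty$, a limit forbidden in $\barcvn$ --- for instance a non-degenerate arc of $T$ with non-cyclic stabiliser, or a non-trivially stabilised tripod --- contradicting very-smallness of the action. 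The one delicate step is exactly this limiting extraction: because $\FN\acts\bar T$ is not proper, one cannot merely pass to a subsequence of the elements $y_{k_\pm(t)}$, so the configuration must be taken apart by hand; this is what is carried out in \cite{LL03}, whence we quote the statement.
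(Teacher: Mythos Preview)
The paper gives no proof of this proposition: it is quoted from \cite{LL03} and closed with a \qed\ immediately after the statement. There is thus no argument in the paper to compare your proposal against.

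Your treatments of equivariance and surjectivity are correct and already go beyond what the paper provides. For injectivity over $\partial T$ you set up a sensible configuration --- two rays in the Cayley tree whose images both track the ray $[P,\xi)$ in $T$, yielding for each $t$ a pair of ``crossing'' indices $k_\pm(t)$ with $d_T(P_{k_-(t)},P_{k_+(t)})\le 2M$ while the Cayley geodesic between $y_{k_-(t)}$ and $y_{k_+(t)}$ is long --- and then, as you yourself say, defer the extraction of a contradiction to \cite{LL03}. That matches the paper exactly.

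One caution about your sketch, though: the mechanism you suggest for the contradiction (producing in the limit a non-cyclically stabilised arc or a stabilised tripod) is not what your configuration obviously yields. All you have produced are elements $g_t=y_{k_-(t)}^{-1}y_{k_+(t)}$ with $d_T(P,g_tP)\le 2M$ and $|g_t|_A\to\infty$, and such elements exist in abundance in any $T\in\barcvn$ with dense orbits without violating very-smallness. The argument in \cite{LL03} does not proceed by extracting a forbidden stabiliser; it goes through the bounded-backtracking property of the equivariant map from the Cayley tree to $T$, which forces the image of the ray toward $X$ to stay within a fixed distance of $[P,\xi)$ and lets one read off $X$ from $\xi$ directly. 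So while your deferral to \cite{LL03} is legitimate and matches the paper, the heuristic you offer for how that reference closes the gap is misleading.
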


The crucial property for us is that $Q$ can be used to
associate to $T$ a lamination \cite{CHL08b}.

\begin{defn}\label{D.DefL2}
  Let $T \in \barcvn$ have dense orbits.  The \textbf{dual
    lamination} of $T$ is
  \[
  L(T):=\{(X,Y) \in \partial^2(\FN)|Q(X)=Q(Y)\}.
  \]
\end{defn}

\subsection{The Map $Q^2$ and the Compact Heart}\label{SS.Q2}

For a tree $T\in\barcvn$ with dense orbits, the map $Q:\partial \FN
\rightarrow \That=\bar T\cup\partial T$ induces a map $Q^2:L(T)
\rightarrow \overline{T}:(X,Y) \mapsto Q(X)=Q(Y)$.  The metric
topology on $T$ canonically extends to $\overline{T}$, and we have:

\begin{prop}[\cite{CHL09}]\label{P.Q2Cont}
 The map $Q^2: L(T) \rightarrow \overline{T}$ is continuous.\qed
\end{prop}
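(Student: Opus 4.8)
Proof plan for Proposition \ref{P.Q2Cont} (continuity of $Q^2 : L(T) \to \overline{T}$).

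\textbf{Proof plan.} The plan is to reduce the statement to sequential continuity and to bootstrap from the continuity of $Q$ into the \emph{observers'} topology, which is already available. Since $L(T)$ is a subspace of the metrizable space $\partial^2\FN$ and $\bar T$ is metric, it suffices to prove sequential continuity, and continuity is local. Fix $(X_n,Y_n)\to(X,Y)$ in $L(T)$ and set $y_n:=\CQ^2(X_n,Y_n)$, $y:=\CQ^2(X,Y)$. Because $\CQ\colon\partial\FN\to\Thatobs$ is continuous (it is the continuous extension of $u\mapsto uP$ recalled after Proposition~\ref{P.DefQ}), the map $\CQ^2\colon L(T)\to\Thatobs$ is continuous, so $y_n\to y$ in the observers' topology; and since each of $y,y_n$ has at least two $\CQ$-preimages, Proposition~\ref{prop:Qonto} gives $y,y_n\in\bar T$ (not in $\partial T$), so metric convergence is at least meaningful. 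The whole content of the statement is thus to \emph{upgrade observers'-convergence to metric-convergence}; this is false in general, since $\Thatobs$ is strictly coarser than the metric topology on a non-locally-compact tree, and it becomes true only because $y_n$ is pinned down as the common $\CQ$-image of the \emph{two} endpoints $X_n,Y_n$.

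The bridge is an elementary observation: on any $K\subseteq\bar T$ that is compact for the metric, the metric and observers' topologies coincide (both compact, Hausdorff, metric finer, so the identity on $K$ is a homeomorphism). Hence, if the compact set $S:=\{(X_n,Y_n):n\in\N\}\cup\{(X,Y)\}\subseteq L(T)$ has metrically relatively compact image $\CQ^2(S)$, then the two topologies agree on $\overline{\CQ^2(S)}$, and the convergence $y_n\to y$ in $\Thatobs$ is convergence in the metric, finishing the proof. So it remains to show that $\CQ^2(S)$ is metrically totally bounded in $\bar T$.

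For this I would invoke a bounded-cancellation estimate for the $\FN$-equivariant map $f_P\colon X_A\to\bar T$, $1\mapsto P$, linear on edges (available for very small actions, in the spirit of \cite{LL03,CH10}): there is a constant $C=C(T,A,P)$ so that for any bi-infinite geodesic $\delta$ in the Cayley tree $X_A$ and any vertex $v$ on $\delta$, $f_P(v)$ lies within $C$ of the image segment determined by the ends of $\delta$, and, when $\CQ^2$ of that end-pair lies in $\bar T$, the images $f_P(\delta(k))$ converge to it \emph{in the metric} as $k\to\pm\infty$. Now the geodesics $\gamma_n$ from $Y_n$ to $X_n$ converge to the geodesic $\gamma$ from $Y$ to $X$ uniformly on balls, so for each $R$ there is $N(R)$ with $\gamma_n|_{B(1,R)}=\gamma|_{B(1,R)}$ for $n\ge N(R)$. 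Combining this synchronization with the bounded-cancellation estimate confines $y_n$, for $n\ge N(R)$, to a $C$-neighborhood of a \emph{fixed} finite subtree of $\bar T$ together with an error governed by the compact family of ``tails'' of the $\gamma_n$, an error that tends to $0$ as $R\to\infty$ uniformly in $n$; letting $R\to\infty$ yields, for every $\varepsilon>0$, a finite $\varepsilon$-net for $\{y_n\}$, i.e. total boundedness of $\CQ^2(S)$. I expect this last step to be the main obstacle: turning the qualitative observers'-topology convergence into a \emph{uniform, quantitative, metric} approximation of $\CQ^2(X_n,Y_n)$ by images of long prefixes of the geodesics $\gamma_n$. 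This is exactly where the structure of $\barcvn$ (very small actions, hence a bounded-cancellation constant for $f_P$) has to be used; continuity of $\CQ$ into $\Thatobs$ alone is not enough, precisely because the two topologies differ on a non-locally-compact tree.
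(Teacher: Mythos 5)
The paper does not actually prove this statement; it is imported verbatim from \cite{CHL09}, so there is no in-paper argument to compare yours against. Judged on its own terms, your reduction is correct and well organized: sequential continuity suffices; $\CQ^2$ is continuous into $\Thatobs$; the images lie in $\overline{T}$ because points of $\partial T$ have a single $\CQ$-preimage; and on a metrically compact subset of $\overline{T}$ the metric and observers' topologies coincide (continuous bijection from a compact space to a Hausdorff space), so observers'-convergence upgrades to metric convergence once you know the image sequence is metrically relatively compact. All of that is sound, and it correctly isolates where the content lives.

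The gap is exactly where you flag it, and it is a genuine one, not a routine verification. Bounded back-tracking with a fixed constant $C=C(T,A,P)$ gives you that each $y_n=\CQ^2(X_n,Y_n)$ lies within $C$ of $wP$ for every vertex $w$ on the geodesic from $Y_n$ to $X_n$; since the geodesics eventually share a vertex with the limit geodesic, this confines $\{y_n\}$ to a ball of radius roughly $2C$, or, as you say, to a $C$-neighborhood of a fixed finite subtree. But in a tree that is not locally compact, a $C$-neighborhood of a finite subtree is \emph{not} totally bounded (already a metric ball can contain infinitely many points pairwise at distance comparable to its radius), so this does not yield an $\varepsilon$-net for any $\varepsilon<C$. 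The assertion that the residual ``error tends to $0$ as $R\to\infty$ uniformly in $n$'' is precisely the uniform quantitative estimate one needs --- essentially that for every $\varepsilon>0$ there is $R$ such that every leaf through the identity admits a vertex $w$ within distance $R$ of the identity with $d(wP,\CQ^2)\le\varepsilon$ --- and nothing in the bounded-cancellation setup as you have stated it produces a constant that shrinks below $C$. Indeed, even the pointwise version (that the prefix images $u_kP$ converge \emph{metrically}, not just in $\Thatobs$, to $\CQ(X)$ along a leaf) requires an argument beyond Proposition~\ref{P.DefQ}, which only guarantees metric convergence for \emph{some} sequence $u_n\to X$; and the uniform version over the whole sequence of leaves is essentially equivalent to the continuity statement being proved. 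So the proposal is a correct framing plus an unproved core estimate, and as written it does not constitute a proof; to complete it you would need the refined form of the bounded back-tracking lemma for leaves of $L(T)$ (controlling $d(wP,\CQ^2(X,Y))$ with a bound that decays as $w$ moves out along the leaf, uniformly on $L(T)\cap C_A$), which is the substance of the argument in \cite{CHL09}.
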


The space $\partial^2 \FN$ is not compact, but there are many ``nice'' coverings of $\partial^2 \FN$ be compact sets. Fixing a basis $A$ for
$\FN$ gives an identification of $\partial \FN$ with the space of
infinite reduced words in $A^{\pm 1}$, hence an identification of
$\partial^2 \FN$ with the space of pairs $(X,Y)$ of distinct infinite
reduced words $X\neq Y$.  For an infinite word $X$, we let $X_1$ stand
for the first letter of $X$.  The \textbf{unit cylinder} of
$\partial^2 \FN$ with respect to $A$ is the subset $C_A:=\{(X,Y)
\in \partial^2 \FN | X_1 \neq Y_1\}$.  For any basis $A$ of $\FN$,
$C_A$ is a compact subspace of $\partial^2 \FN$, and $\partial^2 \FN =
\cup_{g \in \FN} gC_A$.

For a tree $T \in \barcvn$ with dense orbits the \textbf{limit
  set} of $T$ is $\Omega=\CQ^2(L(T))\subseteq\bar T$. For any basis
$A$ of $\FN$, the \textbf{compact limit set} of $T$ (with respect to
$A$) is $\Omega_A:=Q^2(L(T) \cap C_A)\subseteq \overline{T}$.  The
\textbf{heart} of $T$ (with respect to $A$) is the convex hull $K_A$
of $\Omega_A$ in $\overline{T}$.  It follows from
Proposition~\ref{P.Q2Cont} that $\Omega_A$ is a compact subset of
$\overline{T}$, so $K_A$ is a compact subtree of $\overline{T}$.

Now, fix a tree $T \in\barcvn$ with dense orbits and a basis $A$ for
$\FN$.  For a compact subtree $K \subseteq \overline{T}$, let
$S=(K,A)$ denote the system of isometries with $a \in A$ the partial
isometry got by the (maximal) restriction of the action of $a\inv$ to
$K$.  Let $\mathscr{S}$ denote the suspension of $S$, and let
$T_{\mathscr{S}}$ denote the $\FN$-tree dual to $\mathscr{S}$.  The
following result is essential for the present note.

\begin{prop}[\cite{CHL09}]\label{P.HeartProperties}
  Let $T \in \barcvn$ have dense orbits, and fix a basis $A$
  for $\FN$.  Let $K_A$ denote the heart of $T$ with respect to $A$;
  let $S=(K_A, A)$ denote the associated system of isometries, and let
  $\mathscr{S}$ denote its suspension.
\begin{enumerate}
\item [(i)] The tree $T$ is dual to $S$: $T_\mathscr{S}^{\text{min}}=T$, 
\item [(ii)] $L(T) \cap C_A=L(S)=L(\mathscr{S})$, and
\item[(iii)] for any infinite admissible leaf path $X$ in
  $\mathscr{S}$, $\dom(X)=\{\CQ(X)\}$.\qed
\end{enumerate}
\end{prop}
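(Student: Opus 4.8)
The three assertions are closely linked, and the mechanism underlying all of them is the compatibility between the map $Q$ and the partial isometries of $S=(K_A,A)$. I would begin by recording a few elementary reformulations. An infinite admissible leaf path in $\mathscr{S}$ is the same datum as an infinite reduced word $X=X_1X_2\cdots$ over $A^{\pm 1}$; writing $w_n:=X_1\cdots X_n\in\FN$ for its prefixes, one has $w_n\to X$ in $\partial\FN$, and unwinding the definition of composition of partial isometries (written on the right, with $a\in A$ acting as the restriction of $a^{-1}$) gives $\dom(X)=\bigcap_{n\ge 0}w_nK_A$. Since $Q$ is $\FN$-equivariant and $\sigma^nX:=X_{n+1}X_{n+2}\cdots$ equals $w_n^{-1}X$ in $\partial\FN$, one also obtains the shift identity $Q(\sigma^nX)=w_n^{-1}Q(X)$. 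Finally $L(S)=L(\mathscr{S})$ is essentially a matter of definition: a bi-infinite admissible path in $S$ is realized by an honest bi-infinite leaf path as soon as one picks a point common to the domains of its two halves. Thus the real content is the identification of $L(S)$ with $L(T)\cap C_A$ together with (i) and (iii).

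The crux is the following lemma, from which (iii) is immediate: \emph{if $p$ lies in the domain of an infinite admissible leaf path $X$, then $p=Q(X)$.} Indeed $p\in w_nK_A$ for every $n$, so $p_n:=w_n^{-1}p$ lies in $K_A$; as $K_A$ is compact (Proposition~\ref{P.Q2Cont}), after passing to a subsequence we may assume $p_{n_k}\to p_\ast\in K_A\subseteq\overline T$. Then $d(w_{n_k}p_\ast,p)=d(w_{n_k}p_\ast,w_{n_k}p_{n_k})=d(p_\ast,p_{n_k})\to 0$, since the $w_{n_k}$ are isometries and $w_{n_k}p_{n_k}=p$; hence $w_{n_k}p_\ast\to p$ in the metric topology, a fortiori in $\Thatobs$. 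On the other hand $w_{n_k}\to X$, so the defining property of $Q$ — valid for basepoints in $\overline T$, see \cite{LL03,CHL08b} — gives $w_{n_k}p_\ast\to Q(X)$ in $\Thatobs$. As $\Thatobs$ is Hausdorff, $p=Q(X)$. It follows that for an admissible leaf path $X$ the set $\dom(X)$ is non-empty (by definition of admissibility) and that its only possible element is $Q(X)$; hence $\dom(X)=\{Q(X)\}$, which is (iii).

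For (ii) it remains to identify $L(\mathscr{S})$ with $L(T)\cap C_A$. If $\gamma\in L(\mathscr{S})$ has ends $X,Y$, then $X_1\neq Y_1$ (no backtracking at the unique vertex of $\Gamma$), so $(X,Y)\in C_A$, and any point of $\dom(X)\cap\dom(Y)$ equals both $Q(X)$ and $Q(Y)$ by the lemma, so $(X,Y)\in L(T)$. Conversely, let $(X,Y)\in L(T)\cap C_A$, put $q:=Q(X)=Q(Y)$, and fix $n\ge 1$ (the case $n=0$ being the hypothesis). By the shift identity and equivariance, $w_n^{-1}q=Q(\sigma^nX)=Q(w_n^{-1}Y)$; the word $w_n^{-1}Y=X_n^{-1}\cdots X_1^{-1}Y_1Y_2\cdots$ is reduced (because $X$ is reduced and $X_1\neq Y_1$), and its first letter $X_n^{-1}$ differs from $(\sigma^nX)_1=X_{n+1}$ (again by reducedness of $X$); hence $(\sigma^nX,\,w_n^{-1}Y)\in L(T)\cap C_A$, so $w_n^{-1}q=Q(\sigma^nX)\in\Omega_A\subseteq K_A$. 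As this holds for all $n$ we get $q\in\dom(X)$, and symmetrically $q\in\dom(Y)$; thus $(X,Y)$ is admissible and lies in $L(\mathscr{S})$.

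Part (i) then follows from the standard correspondence between systems of isometries and $\R$-trees \cite{BF95}. The tautological inclusions $gK_A\hookrightarrow\overline T$ of the translates of $K_A$ sitting inside the universal cover $\widetilde{\mathscr{S}}$ are compatible with the band identifications — exactly because the partial isometries of $S$ are restrictions of the $\FN$-action on $\overline T$ — and, by (iii), each leaf of $\widetilde{\mathscr{S}}$ is sent to a single point of $\overline T$ (its successive crossings with copies of $K_A$ are all mapped to $Q$ of its forward end). This yields an $\FN$-equivariant map $T_{\mathscr{S}}\to\overline T$ that is the identity on transverse arcs, hence collapses no non-degenerate segment of $T_{\mathscr{S}}$; one then checks that it restricts to an $\FN$-equivariant isometry of $T_{\mathscr{S}}^{\text{min}}$ onto $T=T^{\text{min}}$. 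The step I expect to be the main obstacle is the lemma above, equivalently (iii): the naive metric estimate only shows that $\dom(X)$ lies in the $\mathrm{diam}(K_A)$-neighbourhood of $Q(X)$, and contracting it to the single point $Q(X)$ genuinely uses both the compactness of $K_A$ and the robustness of the $Q$-convergence under metric-small perturbations of the basepoint, which is why one has to argue in the observers' topology rather than the metric topology. Once (iii) is available, (i) and (ii) are comparatively formal.
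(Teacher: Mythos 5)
The paper does not prove Proposition~\ref{P.HeartProperties}; it is quoted from \cite{CHL09} with no argument, so there is nothing in-paper to compare against. Measured against the cited source, your reconstruction is correct and follows essentially the same route: the identity $\dom(X)=\bigcap_n w_nK_A$, the compactness argument pinning $\dom(X)$ to the single point $\CQ(X)$, and the shift trick showing $(\sigma^nX,\,w_n^{-1}Y)\in L(T)\cap C_A$ and hence $w_n^{-1}\CQ(X)\in\Omega_A\subseteq K_A$ are exactly the mechanisms used there. Two points deserve the care you already partly flag: the convergence $w_{n_k}p_*\to \CQ(X)$ in $\Thatobs$ needs basepoint-independence of $\CQ_P$ for $P\in\overline T$ rather than $P\in T$ (this is available in \cite{LL03} and \cite{CHL08a} via bounded back-tracking for the action on $\overline T$, but it is strictly more than Proposition~\ref{P.DefQ} as stated, and the obvious workaround of approximating $p_*$ by a point of $T$ does not quite close, as you note); and part (i) remains a sketch --- one must still check that the leaf-collapsing map $T_{\mathscr{S}}\to\overline T$ is isometric and not merely $1$-Lipschitz (the reverse inequality comes from projecting transverse paths of $\widetilde{\mathscr{S}}$ to paths of equal transverse length in $\overline T$) and that its image contains the minimal subtree $T$. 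Neither point is a gap in the ideas; both are routine given the cited background.
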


Proposition~\ref{P.HeartProperties} allows us to fully transfer the
problem of understanding the dual lamination of a tree in the boundary
of Outer space to the problem of understanding the associated systems
of isometries.

A tree $T\in\barcvn$ is \textbf{geometric} if its compact heart $K_A$
for some (hence any) basis $A$ of $\FN$ is a finite tree, that is to
say a compact $\R$-tree which is the convex hull of finitely many
points, see \cite{CHL09}.

\subsection{Regular Leaves and the Derived Sublamination}\label{SS.Derived}

In this Section, we collect some results regarding the derived
space of $L(T)$ and its relationship to regular leaves in
systems of isometries associated to $T$.

\begin{defn}
  Let $T$ be an $\FN$-tree in $\barcvn$ with dense orbits. A leaf
  $l\in L(T)$ is \textbf{regular} if there exists a sequence $l_n\in
  L(T)$ of leaves converging to $l$ and such that the $x_n=\CQ^2(l_n)$
  are distinct. The set of regular leaves is the \textbf{regular
    sublamination} $L_r(T)$.
\end{defn}

If the action of $\FN$ on $T$ is free, by \cite[Theorem 5.3]{CH10} we
have that for all $x\in\bar T$, $(Q^2)^{-1}(x)$ is a finite set of
uniformly bounded cardinality. Recall that the set of non-isolated
points of a topological space, $X$, is its \textbf{derived space}, $X'$.

\begin{lem}\label{C.LdBasisInd}
  Let $T \in \barcvn$ be free with dense orbits. The regular
  sublamination of $T$ is equal to the derived lamination:
  \[
  L_r(T)=L(T)'.\qed
  \]
\end{lem}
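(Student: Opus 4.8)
The plan is to prove the two inclusions $L_r(T) \subseteq L(T)'$ and $L(T)' \subseteq L_r(T)$ separately, using the finiteness of the fibers of $Q^2$ (from \cite[Theorem 5.3]{CH10}) as the main lever, together with the continuity of $Q^2$ (Proposition~\ref{P.Q2Cont}).

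For the inclusion $L_r(T) \subseteq L(T)'$: suppose $l$ is regular, so there is a sequence $l_n \to l$ with $x_n = Q^2(l_n)$ all distinct. Since the $x_n$ are distinct, at most one of them equals $Q^2(l)$, so after discarding that term we have $l_n \neq l$ for all $n$ (because $l_m = l$ would force $x_m = Q^2(l)$). Hence $l$ is a limit of leaves distinct from itself, i.e. $l \in L(T)'$. This direction is essentially immediate.

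For the reverse inclusion $L(T)' \subseteq L_r(T)$: let $l \in L(T)'$, so there is a sequence $l_n \to l$ with $l_n \neq l$ for all $n$. Set $x_n = Q^2(l_n)$ and $x = Q^2(l)$; by continuity $x_n \to x$. The goal is to extract a subsequence along which the $x_n$ are pairwise distinct, which would exhibit $l$ as regular. Suppose, for contradiction, this is impossible; then the set $\{x_n : n \in \mathbb{N}\}$ is finite (any infinite set of points in the sequence would, if it contained infinitely many distinct values, yield such a subsequence). So, passing to a subsequence, $x_n = y$ is constant, and by continuity $y = x$, i.e. every $l_n$ lies in the fiber $(Q^2)^{-1}(x)$. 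But this fiber is finite by \cite[Theorem 5.3]{CH10}, so the sequence $(l_n)$ takes only finitely many values; since $l_n \to l$, one of these values must equal $l$, contradicting $l_n \neq l$. Therefore a subsequence with distinct $x_n$ exists, and $l \in L_r(T)$.

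I expect the only real subtlety to be the bookkeeping in the dichotomy ``either we can extract a subsequence with distinct $Q^2$-values, or the fiber is finite and forces $l_n = l$ eventually'' — one must be careful that ``cannot extract infinitely many distinct values'' genuinely means the $x_n$ eventually land in a finite set, and that this finite set is contained in a single $Q^2$-fiber only after passing to a further subsequence on which $x_n$ is literally constant. Once that is set up, freeness of the action enters exactly once, via the uniform finiteness of $Q^2$-fibers, and the argument closes. No induction or delicate tree geometry is needed beyond the cited results.
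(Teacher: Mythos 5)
Your proof is correct and follows exactly the route the paper intends: the authors state the lemma without proof, having just recorded the one needed fact (finiteness of the fibers of $Q^2$ when the action is free, from \cite[Theorem~5.3]{CH10}), and your two-inclusion argument via continuity of $Q^2$ and the finite-fiber dichotomy is the standard way to fill in that \qed.
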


\subsection{Mixing Properties for $\FN$-trees}\label{SS.Mixing}

From the work of V.~Guirardel \cite{Gui08} (after
J.~Morgan \cite{Mor88}), there are several notions of
``minimality'' for the dynamics of an action of
a group on an $\R$-tree. These notions are hierarchized as
follows:
\begin{enumerate}
\item \textbf{dense orbits}: the $\FN$-orbit of some (hence any)
  point $P$ of $T$ is dense in $T$;
\item \textbf{arc-dense}: every orbit meets every non-degenerate segment of $T$;
\item \textbf{arc-dense directions}: for each $x \in T$, each
  direction $d$ at $x$, and each non-degenerate arc $I \subseteq T$,
  there exists $g\in\FN$ such that $gx\in I$ and $gd\cap I$ is
  non-degenerate;
\item \textbf{mixing}: for any non-degenerate segments $I$ and $J$ in
  $T$, there exists finitely many elements $u_1,\ldots,u_n$ in $\FN$
  such that $I\subseteq u_1J\cup u_2J\cup\cdots\cup u_nJ$;
\item \textbf{indecomposable}: for any non-degenerate segments $I$ and
  $J$ in $T$, there exists finitely many elements $u_1,\ldots,u_n$ in
  $\FN$ such that
\begin{enumerate}
\item $I\subseteq u_1J\cup u_2J\cup\cdots\cup u_nJ$
\item $u_iJ\cap u_{i+1}J$ is a non degenerate segment for any $i=1,\ldots,n-1$.
\end{enumerate}
\end{enumerate}

We remark that this hierarchy is not exactly strict as

\begin{lem}[{\cite[Lemma 12.6]{R10b}}]\label{L.MixArcDense}
  Let $T \in \barcvn$.  The action $\FN\curvearrowright T$ is
  mixing if and only if it has arc-dense directions.\qed
\end{lem}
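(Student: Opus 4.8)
The plan is to prove the two implications separately, the easier direction being that arc-dense directions implies mixing. Assume $\FN \curvearrowright T$ has arc-dense directions, and let $I, J \subseteq T$ be non-degenerate segments. First I would reduce to showing that some translate $uJ$ of $J$ overlaps $I$ in a non-degenerate segment that contains a fixed sub-segment near an endpoint of $I$: pick a point $x$ in the interior of $I$ and a direction $d$ at $x$ pointing along $I$; applying arc-dense directions with the arc being a small sub-arc of $J$ produces $g$ with $gx$ in that sub-arc and $gd$ overlapping $J$ non-degenerately, hence $g\inv J$ overlaps $I$ non-degenerately through $x$ in the direction $d$. Iterating this ``from both endpoints'' along $I$ and using compactness of $I$, finitely many translates $u_1 J, \dots, u_n J$ cover $I$; to get the overlap condition (b) of mixing one orders these translates along $I$ and, where two consecutive ones only abut at a point, inserts one more translate obtained by a further application of arc-dense directions at that point. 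Since condition (b) is not actually required for mixing (only (4)), the bare covering statement already suffices, so this direction is essentially immediate once $g\inv J$ is seen to overlap $I$.

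The substantive direction is that mixing implies arc-dense directions. Assume $\FN \curvearrowright T$ is mixing, fix $x \in T$, a direction $d$ at $x$, and a non-degenerate arc $I$. Choose a non-degenerate sub-arc $J = [x, y] \subseteq T$ representing the direction $d$, i.e.\ with $[x,y]$ leaving $x$ in the direction $d$. By mixing there are $u_1, \dots, u_n$ with $I \subseteq u_1 J \cup \dots \cup u_n J$. The key observation is that at least one $u_i J$ must meet the interior of $I$ in a non-degenerate arc: if every $u_i J \cap I$ were a single point we could not cover the non-degenerate arc $I$ by finitely many of them, since a point-intersection $u_i J \cap I$ covers only one point of $I$. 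So some $u_i J$ overlaps $I$ non-degenerately; write $g = u_i\inv$, so $g I \cap J$ is non-degenerate. Now the delicate point is to promote this overlap to one that respects the direction $d$ and produces a point of the orbit of $x$ landing inside $I$ with the image direction overlapping $I$.

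The hard part will be this promotion step: a non-degenerate overlap of $gI$ with $J$ need not pass through $x$ nor leave $x$ in the direction $d$, so one cannot immediately read off arc-dense directions. The plan is to handle it by a refinement argument. Shrink $J$ toward $x$: for each $\varepsilon > 0$ let $J_\varepsilon = [x, y_\varepsilon]$ be the sub-arc of length $\varepsilon$ in the direction $d$, apply mixing to $I$ and $J_\varepsilon$, and extract as above some $g_\varepsilon$ with $g_\varepsilon I \cap J_\varepsilon$ non-degenerate; equivalently $g_\varepsilon\inv J_\varepsilon \cap I$ is a non-degenerate sub-arc of $I$, and $g_\varepsilon\inv J_\varepsilon$ leaves $g_\varepsilon\inv x$ in the direction $g_\varepsilon\inv d$. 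As $\varepsilon \to 0$ the point $g_\varepsilon\inv x$ is the endpoint of a germ of $g_\varepsilon\inv J_\varepsilon$ lying in $I$; one argues that for $\varepsilon$ small enough $g_\varepsilon\inv x$ lies in $I$ itself and the germ $g_\varepsilon\inv d$ overlaps $I$ non-degenerately — either because the overlap arc $g_\varepsilon\inv J_\varepsilon \cap I$ has $g_\varepsilon\inv x$ as an interior point of $I$, or, if $g_\varepsilon\inv x$ escapes to an endpoint of $I$, by instead enlarging $I$ slightly at the outset (arc-dense directions for $I$ follows from arc-dense directions for any arc containing $I$, by intersecting). Setting $g = g_\varepsilon$ for such an $\varepsilon$ gives $gx \in I$ and $gd \cap I$ non-degenerate, which is exactly arc-dense directions. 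One should double-check the edge case where the direction $d$ is itself not represented by any non-degenerate arc of $T$ — but in an $\R$-tree every direction at $x$ is represented by a non-degenerate initial segment, so this does not occur.
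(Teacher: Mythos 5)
First, a remark on scope: the paper does not prove this lemma at all --- it is quoted from \cite{R10b} with a \qed --- so there is no internal proof to compare against, and your proposal has to stand on its own. Your direction ``arc-dense directions implies mixing'' is essentially sound: from $gx\in J$ and $gd\cap J$ non-degenerate one gets that $g\inv J$ contains $x$ together with a non-degenerate sub-segment entering the direction $d$, and any two non-degenerate segments issuing from $x$ into the same component of $T\ssm\{x\}$ share a non-degenerate initial segment (their endpoints lie in a common connected component, so the median is distinct from $x$); doing this for both directions along $I$ at interior points, and for the single inward direction at the two endpoints, compactness of $I$ gives the finite covering. You are also right that the overlap condition is not part of mixing.

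The other direction, however, has a genuine gap at exactly the step you flag as delicate, and your proposed repair does not close it. Covering $I$ by translates $u_iJ_\varepsilon$ of the short arc $J_\varepsilon=[x,y_\varepsilon]$ gives no control on where the points $u_ix$ land: a translate meeting $I$ non-degenerately may do so through its $y_\varepsilon$-end or its interior, with $u_ix$ hanging off $I$ at a branch point or protruding past an endpoint of $I$. Letting $\varepsilon\to 0$ changes nothing, since the elements $g_\varepsilon$ are unrelated to one another and $g_\varepsilon\inv x$ merely stays within distance $\varepsilon$ of $I$, which does not place it in $I$; and enlarging $I$ does not help, because arc-dense directions for a larger arc does not imply it for $I$. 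The fix is to apply mixing with the roles of the two arcs exchanged: choose $y$ with $(x,y]\subseteq d$ and cover $[x,y]$ by finitely many translates $v_jI$. The sets $v_jI\cap[x,y]$ are closed sub-segments of $[x,y]$ covering it; if every one containing $x$ were degenerate, the union of the remaining ones would be a closed set containing $(x,y]$ but not $x$, which is absurd. Hence some $v_jI\cap[x,y]=[x,c]$ with $c\neq x$, and $g=v_j\inv$ satisfies $gx\in I$ and $gd\cap I\supseteq g\bigl((x,c]\bigr)$, which is exactly arc-dense directions.
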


In this paper we will use two characterizations of indecomposable
trees.  A \textbf{transverse family} for an action $\FN
\curvearrowright T$ of $\FN$ on an $\R$-tree $T$ is an $\FN$-invariant
family $\{T_v\}_{v\in V}$ of non-degenerate, proper subtrees of $T$
such that if $T_v \neq T_{v'}$, then $T_v\cap T_{v'}$ contains at most
one point.

\begin{prop}[{\cite[Lemma 4.1]{R10a}}]\label{P.IndNoTF}
  Let $\FN \curvearrowright T$ be an action of $\FN$ on an
  $\mathbb{R}$-tree $T$. Then $\FN \curvearrowright T$ is
  indecomposable if and only if there is no transverse family.\qed
\end{prop}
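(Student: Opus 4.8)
The plan is to prove both implications by contraposition; the substance lies in the direction ``$\FN\acts T$ not indecomposable $\Rightarrow$ there is a transverse family.''

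For the easy direction, suppose $\{T_v\}_{v\in V}$ is a transverse family, and I will produce a pair of segments violating indecomposability. Fix $v_0\in V$, a non-degenerate segment $[a_1,a_2]\subseteq T_{v_0}$, and (using $T_{v_0}\subsetneq T$) a point $p\in T\ssm T_{v_0}$. Let $c$ be the median of $\{a_1,a_2,p\}$, so $c\in[a_1,a_2]\subseteq T_{v_0}$; relabel so that $c\neq a_1$, and put $J:=[a_1,c]$ and $I:=[a_1,p]$. Then $J\subseteq I\cap T_{v_0}$ is non-degenerate while $p\in I\ssm T_{v_0}$, so $I$ lies in no member of the family: $I\subseteq T_v$ would make $T_v\cap T_{v_0}\supseteq I\cap T_{v_0}$ non-degenerate, hence $T_v=T_{v_0}\not\ni p$. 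On the other hand $u_iJ\subseteq u_iT_{v_0}=T_{u_iv_0}$ for every $u_i\in\FN$, so if $u_iJ\cap u_{i+1}J$ is non-degenerate then so is $T_{u_iv_0}\cap T_{u_{i+1}v_0}$, forcing $T_{u_iv_0}=T_{u_{i+1}v_0}$. Hence any chain $u_1J,\dots,u_nJ$ with non-degenerate consecutive overlaps is contained in a single $T_w$, which cannot contain $I$. So $\FN\acts T$ is not indecomposable.

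For the converse, fix non-degenerate segments $I_0,J_0$ such that no chain of $\FN$-translates of $J_0$ with non-degenerate consecutive overlaps covers $I_0$. For a non-degenerate segment $J$, call a sequence $J=W_1,W_2,\dots,W_n$ a \emph{$J$-chain} if each $W_i$ is an $\FN$-translate of $J$ and $W_i\cap W_{i+1}$ is non-degenerate for all $i<n$; let $\mathcal{O}_J$ be the set of non-degenerate segments contained in $\bigcup_iW_i$ for some $J$-chain, and put $T_J:=\bigcup_{K\in\mathcal{O}_J}K$. I would first check that $T_J$ is a subtree: each chain-union $\bigcup_iW_i$ is convex and contains $J$, and given $J$-chains $(W_i)_{i\le n}$ and $(V_j)_{j\le k}$ the ``out-and-back'' sequence $W_1,\dots,W_n,W_{n-1},\dots,W_1=J=V_1,V_2,\dots,V_k$ is again a $J$-chain whose union contains both --- here it is crucial that chains may repeat edges --- so these convex subsets form an upward-directed family. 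Next, $T_J$ is \emph{saturated}: any non-degenerate segment $K\subseteq T_J$ has its two endpoints in a common chain-union, hence lies in it, so $K\in\mathcal{O}_J$. In particular $I_0\notin\mathcal{O}_{J_0}$ gives $I_0\not\subseteq T_{J_0}$, so $T_{J_0}$ --- and each translate $T_{gJ_0}=gT_{J_0}$ --- is a proper, non-degenerate subtree, and $\{T_{gJ_0}\}_{g\in\FN}$ is $\FN$-invariant.

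The remaining point is transversality: if $A:=T_{gJ_0}\cap T_{hJ_0}$ contains a non-degenerate segment, then $T_{gJ_0}=T_{hJ_0}$. By saturation $A\in\mathcal{O}_{gJ_0}\cap\mathcal{O}_{hJ_0}$, so $A$ is covered by a $gJ_0$-chain $(U_i)$ and by an $hJ_0$-chain $(V_j)$. Looking near one endpoint $a$ of $A$: among the finitely many sub-segments $U_i\cap A$ some $U_{i^*}\cap A$ is non-degenerate with endpoint $a$ (otherwise a neighborhood of $a$ in $A$ is uncovered), and likewise some $V_{j^*}\cap A$; then $U_{i^*}\cap V_{j^*}$ contains a non-degenerate segment, and $U_{i^*},V_{j^*}$ are both $\FN$-translates of $J_0$. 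Splicing the prefix $gJ_0=U_1,\dots,U_{i^*}$ to the reversed prefix $V_{j^*},\dots,V_1=hJ_0$ at this overlap yields a chain of $\FN$-translates of $J_0$ running from $gJ_0$ to $hJ_0$; prepending this chain (resp. its reverse) to an arbitrary $hJ_0$-chain (resp. $gJ_0$-chain) shows $\mathcal{O}_{hJ_0}\subseteq\mathcal{O}_{gJ_0}$ and conversely, hence $T_{gJ_0}=T_{hJ_0}$. Thus $\{T_{gJ_0}\}_{g\in\FN}$ is a transverse family, which completes the contrapositive.

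I expect the main obstacle to be this transversality step --- specifically, upgrading a merely set-theoretic non-degenerate overlap $A\subseteq T_{gJ_0}\cap T_{hJ_0}$ to an honest non-degenerate overlap between one translate from each chain. ``Reachability by chains'' is not literally transitive (two translates may meet in a single point, or cross transversally at a branch point), and the endpoint/subdivision argument together with the backtracking normalization of chains is exactly what repairs this; convexity of chain-unions, saturation, $\FN$-invariance and properness are then routine.
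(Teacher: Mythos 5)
Your proof is correct and complete in both directions. The paper itself offers no argument here, deferring entirely to the cited reference, and your construction — pulling a bad pair $(I,J)$ out of a transverse family for one direction, and building the transverse family $\{gT_{J_0}\}$ from chain-unions of translates of $J_0$, with the saturation and chain-splicing step to verify transversality, for the other — is essentially the standard proof given in that reference.
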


In the proof of Lemma~\ref{L.NearlyMin}, we need a refined understanding of transverse families that occur in free $\FN$-trees. We collect the following:

\begin{prop}[{\cite[Lemma~4.4]{R10b}} and
  {\cite[Theorem~5]{Lev94}}] \label{prop:transverse} Let $T\in\barcvn$
  be free with dense orbits. If $T$ is not indecomposable there exists
  a non-degenerate subtree $T_0$ of $T$, such that 
  \begin{enumerate}
  \item $\{gT_0\ |\ g\in\FN\}$ is a transverse family,
  \item $H=\Stab(T_0)$ is a free factor of $\FN$,
\item $H$ acts on $T_0$ with dense orbits.\qed
\end{enumerate}
\end{prop}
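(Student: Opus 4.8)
The plan is to extract from an arbitrary transverse family a single $\FN$-orbit $\{gT_0\}$, chosen so that $\Stab(T_0)$ acts with dense orbits, and then to obtain the free-factor statement from the graph-of-actions (skeleton) decomposition this transverse family determines, using crucially that $\FN\acts T$ is free. Since $T$ is not indecomposable, Proposition~\ref{P.IndNoTF} furnishes a transverse family $\{T_v\}_{v\in V}$. Fixing one index $v_0$, the $\FN$-orbit of $T_{v_0}$ is again a transverse family (a subfamily of an $\FN$-invariant one), so items~(1) and (2) present no real difficulty for $T_{v_0}$ in place of $T_0$; the genuine work --- this is \cite[Lemma~4.4]{R10b} --- is to pass from $T_{v_0}$ to a subtree $T_0$ on which $H:=\Stab(T_0)$ acts with \emph{dense} orbits, which is item~(3).

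For that refinement one passes to the minimal invariant subtree of $\Stab(T_{v_0})$ and discards its simplicial part, using the decomposition of very small actions into simplicial and dense-orbit pieces \cite{Lev94}. The surviving dense-orbit part $T_0$ is $\Stab(T_{v_0})$-invariant (the decomposition being canonical), and its $\FN$-orbit is again a transverse family since $T_0\subseteq T_{v_0}$ and $\Stab(T_{v_0})\le\Stab(T_0)$. That $T_0$ is non-degenerate and that $H\acts T_0$ has dense orbits is then forced by the standing hypothesis that $\FN\acts T$ has dense orbits: density of the $\FN$-orbit of a point of $T_0$ inside $T_0$ translates, through the tree structure and the transverse-family property, into density of the $H$-orbit inside $T_0$. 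This secures items~(1) and (3).

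Finally, that $H$ is a free factor follows from the graph-of-actions decomposition attached to the transverse family $\{gT_0\}$, cf. \cite[Theorem~5]{Lev94}: the skeleton of $\{gT_0\}$, in the sense of Guirardel \cite{Gui08}, is a simplicial $\FN$-tree $S$ with one vertex for each translate $gT_0$, one vertex for each point of $T$ lying in two distinct translates, and with every edge stabilizer equal to the $\FN$-stabilizer of a point of $T$. As $\FN\acts T$ is free, all point stabilizers are trivial, so $S$ is a simplicial $\FN$-tree with trivial edge stabilizers; Bass--Serre theory then expresses $\FN$ as a free product of the vertex groups together with a free group, with $\Stab(T_0)=H$ as one of the factors, so $H$ is a free factor of $\FN$, in particular finitely generated. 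The main obstacle is the middle step, \cite[Lemma~4.4]{R10b}: replacing a piece of a bare transverse family by a subtree whose stabilizer acts with dense orbits, which is precisely where the density hypothesis on $\FN\acts T$ is indispensable --- without it one gets only item~(1) --- whereas item~(2) is then a formal consequence of the skeleton construction and the triviality of point stabilizers in a free action.
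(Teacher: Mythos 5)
First, note that the paper does not prove this proposition at all: it is imported wholesale from \cite[Lemma~4.4]{R10b} and \cite[Theorem~5]{Lev94} (hence the \qed in the statement), so there is no in-paper argument to compare against. Your sketch does reproduce the standard architecture of the cited proof (extract an orbit of a transverse family, refine to a dense-orbit piece via Levitt's decomposition, read off the free factor from a skeleton with trivial edge stabilizers), but at the two places where the real work happens you substitute an assertion for an argument, and one of those assertions rests on a false general principle.

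The main gap is item (3). You claim that density of the $\FN$-orbit of a point $x\in T_0$ inside $T_0$ ``translates, through the tree structure and the transverse-family property, into density of the $H$-orbit.'' It does not: the set $\FN x\cap T_0$ consists of $Hx$ together with all points $gx$ ($g\notin H$) for which $gx$ is the unique point of $gT_0\cap T_0$, and a priori these single-point intersections with other translates can themselves be dense in $T_0$, so nothing forces $Hx$ to be dense. Establishing (3) --- and even the preliminary facts that $\Stab(T_{v_0})$ is nontrivial, finitely generated, and contains a hyperbolic isometry, without which ``pass to the minimal invariant subtree and apply Levitt's decomposition'' is not licensed --- is precisely the content of \cite[Lemma~4.4]{R10b}, and your write-up defers to it at exactly that point while presenting the deferral as a deduction. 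A smaller but related slip: Levitt's decomposition of the minimal $\Stab(T_{v_0})$-tree is canonical as a \emph{collection} of dense-orbit pieces, so an individual piece $T_0$ need not be $\Stab(T_{v_0})$-invariant; its stabilizer may be a proper subgroup, and the subsequent free-factor argument must be run for that subgroup.

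The second gap is in item (2). Guirardel's skeleton is defined for a \emph{transverse covering} (a transverse family of closed subtrees such that every arc of $T$ is covered by finitely many members), not for a bare transverse family; only in the former case is the skeleton a simplicial $\FN$-tree to which Bass--Serre theory applies. Proposition~\ref{P.IndNoTF} hands you only a transverse family, and upgrading it to a transverse covering (e.g.\ by passing to closures and proving the finiteness condition, which again uses dense orbits) is part of what the cited results accomplish. Once that is in place, your endgame is fine: freeness of $\FN\acts T$ kills the edge stabilizers, and a free splitting with $H$ as a vertex group exhibits $H$ as a free factor. So the route is the right one, but the two steps you describe as routine are exactly the ones that are not.
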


A finitely generated subgroup $H$ of $\FN$ is quasiconvex, thus the
boundaries $\partial H\subseteq\partial\FN$ come with a natural
inclusion. We say that a line $(X,Y)\in\partial^2\FN$ is
\textbf{carried} by $H$ if $(X,Y)\in\partial^2H$.

\begin{prop}[{\cite[Corollary~4.8]{R10a}}]\label{P.Indecomp}
  Let $T \in \barcvn$ be free and indecomposable, and let $H
  \leq \FN$ be finitely generated.  Then $H$ carries a leaf of
  $L(T)$ if and only if $H$ has finite index in $\FN$.\qed
\end{prop}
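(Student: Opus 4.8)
The plan is to treat the two implications quite differently: ``if'' is essentially formal, while ``only if'' is the real content and will be proved by contradiction, producing a transverse family and invoking Proposition~\ref{P.IndNoTF}.

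For the ``if'' direction: if $[\FN:H]<\infty$ then $\partial H=\partial\FN$, so $\partial^2 H=\partial^2\FN\supseteq L(T)$ and $H$ carries \emph{every} leaf of $L(T)$; one only needs $L(T)\neq\emptyset$. Since $T$ is indecomposable it has dense orbits, and then $Q\colon\partial\FN\to\widehat T$ of Proposition~\ref{prop:Qonto} is a continuous surjection from a Cantor set onto a space which is connected in the observers' topology and has more than one point, hence $Q$ is not injective and some pair $(X,Y)$ with $Q(X)=Q(Y)$ exists.

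For the ``only if'' direction, assume toward a contradiction that $H$ is finitely generated, $[\FN:H]=\infty$, and $H$ carries a leaf $\ell=(X,Y)\in L(T)$; put $p:=Q(X)=Q(Y)$, so $p\in\overline T$ since points of $\partial T$ have a unique $Q$-preimage (Proposition~\ref{prop:Qonto}). First I would run the easy reductions: $T$ free forces $H$ free with all nontrivial elements hyperbolic, and $H$ cannot be cyclic (if $H=\langle h\rangle$ the only lines it carries are the pairs of endpoints of $A(h)$, which since $\ell_T(h)>0$ map under $Q$ to the two distinct ends of $A_T(h)$ in $\partial T$). So $H$ has rank $\geq2$ and a non-degenerate minimal invariant subtree $T_H:=T_H^{\min}$. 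Next, the leaf descends to $T_H$: taking $u_n\in H$ with $u_n\to X$ and $P\in T_H$, one has $u_nP\in T_H$ and $u_nP\to p$, and symmetrically from the $Y$-side, so $p\in\overline{T_H}$ and $\ell$ is a leaf of the dual lamination of the $H$-tree $T_H$. In particular $T_H$ is not simplicial (a free minimal simplicial action has injective $Q$), so $H\curvearrowright T_H$ has dense orbits on a non-degenerate subtree, and after passing to that subtree via the Levitt decomposition I may assume $H\curvearrowright T_H$ has dense orbits; then $\overline{T_H}$ coincides, up to boundary ends, with $Q(\partial H)$. Finally, each translate $gT_H=T_{gHg^{-1}}$ is a \emph{proper} subtree of $T$: if $T_H=T$ then $H$ acts minimally on $T$, but $[\FN:H]=\infty$ makes $\partial H$ nowhere dense in $\partial\FN$, and since the fibers of $Q^2$ are finite of uniformly bounded size (\cite[Theorem 5.3]{CH10}) one gets $Q(\partial H)\subsetneq Q(\partial\FN)=\widehat T$, so $T_H$, being the convex hull of $Q(\partial H)\cap\partial T$, is proper.

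With these in place, I would build the transverse family as follows. Form the ``overlap classes'' of the $\FN$-orbit $\{gT_H:g\in\FN\}$, declaring two translates equivalent when they share a non-degenerate arc; the union of each class is an $\FN$-invariantly-permuted subtree, and distinct such unions meet in at most one point, so this is a transverse family \emph{unless some class fills $T$}. Thus the contradiction is immediate except in this residual case, and that case is the main obstacle. To kill it one uses the leaf together with two combinatorial facts about finitely generated subgroups of free groups: $\partial H$ is nowhere dense in $\partial\FN$, and for any $Z\in\partial\FN$ only finitely many cosets $gH$ satisfy $Z\in\partial(gHg^{-1})$ (these inject into the vertex set of the core of the Schreier graph $H\backslash X_A$). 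Applied to the finite set $Q^{-1}(p)$, the second fact shows that only finitely many translates $gT_H$ contain $p$ in their closure; pushing this through (and using compactness of $L(T)\cap C_A$) one argues that if a class fills $T$ then every leaf of $L(T)$ is ``trapped'', near each of its $Q$-images, inside finitely many conjugates of $H$, which forces the $\FN$-orbit of $\ell$ into a nowhere-dense subset of $\partial^2\FN$ — contradicting the minimality of $\FN\curvearrowright\partial\FN$. Hence no class fills $T$, the overlap-class family is a genuine transverse family, and Proposition~\ref{P.IndNoTF} gives the contradiction; therefore $[\FN:H]<\infty$.

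The hard part is precisely the last paragraph: for an indecomposable tree the $\FN$-orbit of any non-degenerate subtree is extremely entangled, so $T_H$ itself never yields a transverse family directly, and the work is in certifying that the chosen (overlap-class) subtree is proper — i.e.\ in converting the finiteness of $Q^2$-fibers and of conjugates of $H$ meeting a given boundary point into the geometric statement that some class misses a point of $T$, against the density forced by minimality of the boundary action.
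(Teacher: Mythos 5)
The paper does not prove this proposition: it is imported verbatim from \cite[Corollary~4.8]{R10a} and used as a black box (it is the key external input to Theorem~A), so there is no in-paper argument to compare yours against and it must stand on its own. Your ``if'' direction is correct. The skeleton of your ``only if'' direction --- pass to the minimal subtree $T_H$, check it is non-degenerate, has dense orbits and is proper, form overlap classes of the translates $gT_H$, and invoke Proposition~\ref{P.IndNoTF} --- is a reasonable plan, and the properness of $T_H$ can indeed be extracted along the lines you sketch (surjectivity of $Q$ for the $H$-action would force $Q^{-1}(\partial T)\subseteq\partial H$, while $\{g^{+\infty}\,:\,g\neq 1\}\subseteq Q^{-1}(\partial T)$ is dense in $\partial\FN$ and $\partial H$ is nowhere dense).

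The gap is exactly the step you flag as the crux, and the contradiction you propose there is not a contradiction. The fact that only finitely many translates $gT_H$ contain a given point of $\overline T$ in their closure is a finite-multiplicity statement; it is perfectly compatible with all translates lying in a single overlap class (a chain of pairwise-overlapping intervals covers a line with multiplicity two), so it does not produce a second class or a proper union. Worse, the final step fails outright: concluding that the $\FN$-orbit of $\ell$ lies in a nowhere-dense subset of $\partial^2\FN$ contradicts nothing, because $L(T)$ itself is a closed, $\FN$-invariant and (for free $T$ with dense orbits, by finiteness of the $Q$-fibers) nowhere-dense subset of $\partial^2\FN$ that already contains that orbit. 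Minimality of $\FN\acts\partial\FN$ concerns orbits of single boundary points; the action on $\partial^2\FN$ is not minimal, and no leafwise orbit is expected to be dense there. So the case of a single overlap class filling $T$ --- precisely the case that separates indecomposable trees from merely mixing ones --- remains open, and this is where the actual content of \cite[Corollary~4.8]{R10a} lies; closing it requires a finer analysis of how $\partial^2H$ can meet $L(T)$ than the finiteness of $Q$-fibers and of conjugates of $H$ through a given boundary point.
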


\section{The Rips Machine and Types of Actions}

\subsection{The Rips Machine}\label{SS.Rips}

We recall the generalization of Process I of the Rips Machine
\cite{GLP94,BF95} that was first studied in the present contex in
\cite{CH10}.

Let $S=(F,A)$ be a system of isometries.  The output of one step of
the Rips Machine applied to $S$ is a new system of isometries $S'=(F',
A')$ defined as follows:
\[
 F':=\{x \in F| \exists a \neq a' \in A^{\pm 1}, x \in \dom(a)\cap\dom(a')\}
\]
Since $A$ is finite and since intersections of domains of isometries
are compact $\mathbb{R}$-trees, we have that $F'$ is again a finite
forest.  We let $A'$ consist of all maximal restrictions of the
elements of $A$ to pairs of connected components of $F'$, so
$S'=(F',A')$ is indeed a system of isometries, as required.

The suspension $\mathscr{S}'$ of $S'$ is a subspace of the suspension
$\mathscr{S}$ of $S$. We can regard each leaf-path in $\mathscr{S}'$
as a leaf-path in $\mathscr{S}$, in particular for bi-infinite
admissible leaf paths $L(\mathscr{S}')\subseteq L(\mathscr{S})$.
On the other hand, the Rips Machine does not modify bi-infinite admissible
leaf paths, thus:

\begin{lem}\label{L.NonSingRips}
  Let $S=(F,A)$ be a system of isometries, and let $S'=(F',A')$ denote
  the output of the Rips machine applied to $S$.  The laminations are
  equal: $L(\mathscr{S})=L(\mathscr{S}')$.\qed
\end{lem}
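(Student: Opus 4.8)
The claim is that the Rips machine does not change the bi-infinite admissible leaf lamination, i.e. $L(\mathscr{S}) = L(\mathscr{S}')$. The plan is to establish the two inclusions separately, the nontrivial direction being $L(\mathscr{S}) \subseteq L(\mathscr{S}')$.

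For the inclusion $L(\mathscr{S}') \subseteq L(\mathscr{S})$: since $\mathscr{S}'$ is, by construction, a subspace of $\mathscr{S}$ (the forest $F'$ sits inside $F$, and each band of $\mathscr{S}'$, being a maximal restriction of a band of $\mathscr{S}$, embeds in the corresponding band of $\mathscr{S}$), the foliation on $\mathscr{S}'$ is just the restriction of the foliation on $\mathscr{S}$. Hence any admissible leaf path in $\mathscr{S}'$ is, a fortiori, an admissible leaf path in $\mathscr{S}$; in particular every bi-infinite one lies in $L(\mathscr{S})$. This is essentially already noted in the text preceding the lemma, so I would only spell it out briefly.

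For the harder inclusion $L(\mathscr{S}) \subseteq L(\mathscr{S}')$: let $\gamma \colon \R \to \mathscr{S}$ be a bi-infinite admissible leaf path, crossing the sequence of bands $(a_k)_{k\in\Z}$ with $a_k \in A^{\pm 1}$, and let $x \in \dom(\gamma)$ be a point on the central fiber (so $\gamma$ passes through $x \in F$ at an integer time, say $k=0$). By admissibility, for every $i \le j$ the restriction $\gamma|_{[i,j]}$ has nonempty domain, so the point $x_m := x \cdot a_1 a_2 \cdots a_m$ (the image of $x$ after crossing $m$ consecutive bands, for any $m$, positive or negative with the obvious convention) lies in $F$ and is a legitimate point on the leaf. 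The key observation is that each such $x_m$ lies in $\dom(a_{m+1}) \cap \dom(a_m^{-1})$ — it is in the domain of the next partial isometry $a_{m+1}$ because $\gamma$ continues past it, and it is in the domain of $a_m^{-1}$ because $\gamma$ arrived there via $a_m$. Since $\gamma$ is reduced, $a_{m+1} \ne a_m^{-1}$ in $A^{\pm 1}$, so in fact $x_m \in \dom(a) \cap \dom(a')$ for two distinct $a \ne a' \in A^{\pm 1}$, which means precisely $x_m \in F'$. Moreover the restriction of $a_{m+1}$ carrying $x_m$ to $x_{m+1}$ is a maximal restriction to connected components of $F'$ containing these points, i.e. it is (a sub-isometry of) an element of $A'$. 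Therefore $\gamma$, read off as a leaf path, lifts to an admissible leaf path in $\mathscr{S}'$: at each integer time it passes through a point of $F'$, and each band-crossing it performs is realized by an element of $A'$. Hence $\gamma \in L(\mathscr{S}')$.

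The main obstacle is the bookkeeping in the last step: one must check that the band of $\mathscr{S}'$ actually used is the correct maximal restriction (the element of $A'$ is defined as the maximal restriction to a \emph{pair of connected components} of $F'$, so I need that $x_m$ and $x_{m+1}$ lie in single components of $F'$ and that $a_{m+1}$ restricted between those components still sends $x_m$ to $x_{m+1}$), and that the leaf path obtained is genuinely a \emph{locally isometric, immersed} leaf path in $\mathscr{S}'$ — immersion follows from reducedness of the underlying edge path, which is unchanged. I would present the argument for one-sided infinite paths first (showing $\dom(\gamma) \cap F' \ne \emptyset$ along the orbit) and then glue the two halves, or simply run the indexing over all of $\Z$ at once as above. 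No estimates or limits are needed; it is a purely combinatorial-topological verification.
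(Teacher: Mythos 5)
Your argument is correct and is exactly the content the paper intends: the inclusion $L(\mathscr{S}')\subseteq L(\mathscr{S})$ comes from $\mathscr{S}'$ being a subspace of $\mathscr{S}$, and the reverse inclusion comes from observing that each integer-time point $x_m$ of a bi-infinite admissible leaf path lies in $\dom(a_{m+1})\cap\dom(a_m^{-1})$ with $a_{m+1}\neq a_m^{-1}$ by reducedness, hence in $F'$. The paper leaves this as an assertion ("the Rips Machine does not modify bi-infinite admissible leaf paths"), and your write-up is the natural fleshing-out of that same argument.
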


\subsection{Types of Actions}\label{SS.Types}

We consider the output of iterating the Rips Machine on a system
of isometries $S_0=(F_0, A_0)$; we denote by $S_i$ the output of the
$i^{th}$ iteration of the Rips Machine.  If for some $i_0$, one has
that $F_{i_0}=F_{i_0+1}$, \emph{i.e.} the Rips Machine \textbf{halts} on
$S_{i_0}$, then we say that the Rips Machine \textbf{eventually halts}
on $S_0$.

\begin{defn}
  Let $S_0$ be a system of isometries.  If the Rips Machine eventually
  halts on $S_0$, then $S_0$ is called \textbf{surface type}.
\end{defn}

The \textbf{limit set} of $S_0$ is $\Omega=\cap_{i\in\N}F_i$. If $S_0$
is of surface type then $\Omega=F_{i_0}$. If $S=(K_A,A)$ is a system
of isometries associated to a tree $T\in\barcvn$ with dense orbits,
then the limit set $\Omega$ of the system of isometries is equal to
the compact limit set $\Omega_A$ with respect to the basis $A$ defined
in Secton~\ref{SS.Q2}.

\begin{defn}
  Let $S_0$ be a system of isometries, and suppose that the Rips
  machine does not eventually halt on $S_0$.  If the limit set $\Omega$
  associated to $S_0$ is totally disconnected, then $S_0$ is said to
  be \textbf{Levitt type}.
\end{defn}

In \cite{CH10} it is shown that for $T \in \barcvn$ with dense
orbits, if for some basis $A$, the system of isometries associated to
$K_A$ is of surface type (resp. Levitt type), then for every basis
$A'$, the system of isometries associated to $K_{A'}$ is of surface
type (resp. Levitt type).  In this case we say that $T$ is of
\textbf{surface type} (resp. \textbf{Levitt type}).  It should be noted
that there are trees in $\barcvn$ that are neither of surface
type nor Levitt type; however, we have the following:

\begin{prop}[{\cite[Proposition 5.14]{CH10}}]\label{P.MixType}
  Let $T \in \barcvn$ have dense orbits.  If the action $\FN
  \curvearrowright T$ is mixing, then $T$ is either of surface type or
  Levitt type.\qed
\end{prop}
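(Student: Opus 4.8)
The plan is to establish the following sharper statement, which clearly implies the proposition: if $\FN\acts T$ is mixing with dense orbits and the limit set is not totally disconnected, then the Rips machine halts, so $T$ is of surface type. This suffices because, by the definitions in Section~\ref{SS.Types}, $T$ is of surface type exactly when the Rips machine halts and of Levitt type exactly when it does not halt while the limit set is totally disconnected; hence the only way $T$ could be of neither type is that the machine fails to halt while the limit set is not totally disconnected. So fix a basis $A$, let $S_0=(K_A,A)$ be the system of isometries on the heart, let $S_i=(F_i,A_i)$ be its $i$-th Rips iterate, and let $\Omega_A=\bigcap_i F_i$ be the limit set; assume $\Omega_A$ is not totally disconnected.

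The first step is to use mixing to spread out the limit set. Since $\Omega_A$ is not totally disconnected it contains a non-degenerate arc, and as $T$ has dense orbits I may choose a non-degenerate segment $J\subseteq T$ with $J\subseteq\Omega_A$. The full limit set $Q^2(L(T))\subseteq\bar T$ is $\FN$-invariant, by $\FN$-invariance of $L(T)$ and equivariance of $Q$, and it contains $\Omega_A$, hence contains $J$. Given an arbitrary non-degenerate segment $I\subseteq T$, mixing produces $u_1,\dots,u_n\in\FN$ with $I\subseteq u_1J\cup\dots\cup u_nJ\subseteq Q^2(L(T))$; letting $I$ range over all segments of $T$, I conclude $T\subseteq Q^2(L(T))$. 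Thus, under mixing, a single non-degenerate arc in the limit set forces $Q^2$ to be onto $T$: every point of $T$ is $Q(X)=Q(Y)$ for some leaf $(X,Y)\in L(T)$.

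The second step is to push this back into the system of isometries and show that the Rips machine erases no point of $K_A$. Recall that $F_1$ is exactly the set of points of $K_A$ lying in the domains of two distinct generators in $A^{\pm1}$, and that one checks $\Omega_A$ to be precisely the set of points of $K_A$ that are interior points of a bi-infinite admissible leaf path of the suspension $\mathscr{S}_0$ (equivalently, of a leaf of $L(\mathscr{S}_0)=L(T)\cap C_A$, by Proposition~\ref{P.HeartProperties}(ii)). Given $x\in K_A\cap T$, the first step provides a leaf $(X,Y)\in L(T)$ with $Q(X)=Q(Y)=x$ and $X\neq Y$. I would then argue that $x$ is an interior point of a bi-infinite admissible leaf path of $\mathscr{S}_0$: each of the rays $X,Y$ determines an infinite admissible leaf path issuing from $x$ with domain $\{Q(X)\}=\{Q(Y)\}=\{x\}$ by Proposition~\ref{P.HeartProperties}(iii), these half-leaves are distinct since $X\neq Y$ and leave $x$ along distinct directions, hence glue to a bi-infinite admissible leaf path through $x$, so $x\in\Omega_A$. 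Since $\Omega_A$ is closed and $K_A\cap T$ is dense in $K_A$, this yields $\Omega_A=K_A=F_0$; as the $F_i$ are decreasing with intersection $\Omega_A$, we get $F_0=F_1$, the Rips machine halts, and $T$ is of surface type. Combined with the trivial case in which $\Omega_A$ is totally disconnected ($T$ is then of surface type if the machine halts and of Levitt type otherwise), this proves the proposition.

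The step I expect to be the main obstacle is the second one, namely transporting the pointwise surjectivity of $Q^2$ onto $T$ into the uniform combinatorial statement that every point of $K_A$ lies in two distinct domains. The subtlety is that the identification $L(\mathscr{S}_0)=L(T)\cap C_A$ only records the leaves of $L(T)$ lying in the cylinder $C_A$, so for an arbitrary $x=Q(X)=Q(Y)$ one must carefully realize the rays $X$ and $Y$ as genuine admissible leaf paths \emph{based at $x$} — not merely at an $\FN$-translate of $x$ — and verify that they leave $x$ in genuinely distinct directions; this is precisely where the construction and basic properties of $Q$ and of the heart $K_A$ are needed (Propositions~\ref{P.DefQ}, \ref{prop:Qonto}, \ref{P.HeartProperties}), together with the fact that the admissible lamination is undisturbed by the Rips machine (Lemma~\ref{L.NonSingRips}); dense orbits is used throughout the $Q$-machinery, and points of $\bar T\ssm T$ cause no trouble since $\Omega_A$ is closed. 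One could alternatively package part of this differently: first deduce from mixing that the $\FN$-translates of the non-degenerate connected components of $\Omega_A$ cover $T$ — if some $z\in T$ lay in none of them, a segment through $z$ together with a segment inside such a component would violate the mixing property — and then run the same identification on each such component.
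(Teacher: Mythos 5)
The paper offers no internal proof of this proposition: it is imported verbatim from \cite[Proposition 5.14]{CH10}, so the only comparison available is against that reference. Your overall strategy (show that mixing together with a non-degenerate component of the limit set forces the Rips machine to halt) has the right shape, and your first step is essentially sound: the interior of any non-degenerate arc of $\bar T$ lies in $T$ (a point of $\bar T\ssm T$ interior to an arc would disconnect $\bar T$ while $T$ is a connected dense subtree, which is absurd), so a non-degenerate arc of $\Omega_A$ yields a segment $J\subseteq T$ inside the $\FN$-invariant set $Q^2(L(T))$, and mixing then gives $T\subseteq Q^2(L(T))=\FN\cdot\Omega_A$.

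The second step --- the one you yourself flag as the main obstacle --- has a genuine gap, and the intermediate statement it aims at is false. From $x=\CQ(X)=\CQ(Y)$ with $X\neq Y$ and $x\in K_A$ you cannot conclude $x\in\Omega_A$: the set $\Omega_A=Q^2(L(T)\cap C_A)$ only records leaves whose two ends have \emph{distinct first letters} in the basis $A$. If every pair of preimages of $x$ shares a nontrivial common prefix $u$, all one gets is $u^{-1}x\in\Omega_A$, i.e.\ $x\in u\,\Omega_A$, and your assertion that the two half-leaves ``leave $x$ along distinct directions'' is precisely what fails; nothing in the $\CQ$-machinery excludes this (Proposition~\ref{P.HeartProperties}(iii) passes from admissible leaf paths to $\CQ$, not conversely). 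Consequently the conclusion $\Omega_A=K_A=F_0=F_1$ is too strong: it would mean the Rips machine halts at step $0$ for every basis, whereas $\Omega_A$ is in general a proper (even disconnected) subforest of its convex hull $K_A$ even for mixing surface-type trees --- this is exactly why the Convention preceding Definition~\ref{def:splitting} insists on running the machine until it halts before doing anything else. What your first step genuinely establishes is $T\subseteq\FN\cdot\Omega_A$; the missing content of the proposition is the bridge from this covering statement to the assertion that the decreasing sequence $F_i$ stabilizes after finitely many steps (equivalently, that $\Omega_A$ is a finite forest reached in finite time), which requires controlling the connected components of $\Omega_A$ and how the machine erases $K_A\ssm\Omega_A$. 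That bridge is the substance of \cite[Proposition 5.14]{CH10} and is absent from your proposal.
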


\subsection{Levitt type Actions}

Let $T \in \barcvn$ has dense orbits, and let $S_0=(K_A,
A)=(F_0,A_0)$ be an associated system of isometries.  Denote by $S_i$
the output of the $i^{th}$ iteration of the Rips machine.   

Recall the definition of the graph $\Gamma$ associated to a system of
isometries $S$ from Section~\ref{SS.Systems}.  Let $\Gamma_i$ denote
the graph associated to $S_i$: $\Gamma_i$ is got by contracting each
band of $\mathscr{S}_i$ onto one of its leaves. There are
induced graph morphisms $\tau_i:\Gamma_i \rightarrow \Gamma_{i-1}$.
The following Lemma follows from \cite[Propositions~3.12, 3.13, and 5.6]{CH10}.

\begin{lem}\label{L.TauIsHE}
  Let $T \in \barcvn$ have dense orbits; let $A$ a basis for
  $\FN$; and let $S_0$ denote the associated system of isometries.
  Denote by $S_i$ the output of the $i^{th}$ iteration of the Rips
  machine applied to $S_0$, and let $\Gamma_i$ be the associated graph.
\begin{enumerate}
 \item [(i)] $\Gamma_i$ has no vertices of valence 0 or 1, and 
 \item [(ii)] the maps $\tau_i :\Gamma_i \rightarrow \Gamma_{i-1}$ are
   homotopy equivalences.\qed
\end{enumerate}  
\end{lem}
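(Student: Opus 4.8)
The plan is to trace through the Rips-machine construction keeping track of what happens to the graph $\Gamma_i$, and to cite the three referenced propositions of \cite{CH10} at the appropriate moments. The statement has two parts, and I would treat them separately.

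For part (ii), that each $\tau_i\colon\Gamma_i\to\Gamma_{i-1}$ is a homotopy equivalence, the natural route is to pass through the suspensions. Recall that $\Gamma_{i-1}$ is a deformation retract of $\mathscr S_{i-1}$ (collapse each component of $F_{i-1}$ to a point and each band to a core fiber), and likewise for $\Gamma_i$ and $\mathscr S_i$; since $\mathscr S_i \subseteq \mathscr S_{i-1}$ is the subspace obtained by the Rips move, the map $\tau_i$ is the composition $\Gamma_i \hookrightarrow \mathscr S_i \hookrightarrow \mathscr S_{i-1} \to \Gamma_{i-1}$ up to homotopy. So it suffices to show the inclusion $\mathscr S_i\hookrightarrow\mathscr S_{i-1}$ is a homotopy equivalence, equivalently (since both are aspherical graphs-of-spaces with free $\pi_1$) that it induces an isomorphism on $\pi_1$. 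This is exactly the content of \cite[Propositions 3.12 and 3.13]{CH10}: Process~I of the Rips machine erases from $F_{i-1}$ only the points lying in at most one domain, and the leaves through such erased points are ``terminal'' — removing them zips up along a forest and does not change the homotopy type of the suspension. One must check that collapsing each component of $F_i$ (rather than of $F_{i-1}$) and each band of $\mathscr S_i$ still yields a deformation retraction onto $\Gamma_i$ — routine — and that the bookkeeping identifies the resulting map with $\tau_i$. I would phrase this as: $\tau_i$ fits into a commuting square of homotopy equivalences with the inclusion $\mathscr S_i\hookrightarrow\mathscr S_{i-1}$, which is a homotopy equivalence by \cite[Prop.~3.12, 3.13]{CH10}, hence so is $\tau_i$.

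For part (i), that $\Gamma_i$ has no vertex of valence $0$ or $1$: a valence-$0$ vertex is a component of $F_i$ meeting no band, and a valence-$1$ vertex is a component met by the base of exactly one band (counted with multiplicity, and not both bases of the same band lying in it). Such a component would carry a ``free'' or ``terminal'' subtree in the suspension. The point is that $F_i = \bigcap_{j\le i} F_j$ is, at every stage, made up only of points lying in at least two domains (for $i\ge 1$; for $i=0$ one uses that $S_0=(K_A,A)$ comes from the \emph{heart}, i.e. $K_A$ is the convex hull of $\Omega_A = \bigcap_i F_i$, so no leaf of $\mathscr S_0$ through a point of $K_A$ is terminal). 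After one Rips step every point of $F_i$ lies in $\dom(a)\cap\dom(a')$ for distinct $a\ne a'\in A^{\pm1}$, so locally there are at least two band-directions leaving the corresponding vertex — this forbids valence $\le 1$. I would cite \cite[Proposition 5.6]{CH10} for the statement that these systems of isometries are ``clean'' / ``reduced'' in the relevant sense; the combinatorial translation into the valence condition on $\Gamma_i$ is then immediate from the definition of the edges of $\Gamma_i$ (one edge per element of $A_i$, joining the components containing $\dom$ and $\mathrm{im}$).

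The main obstacle, as I see it, is not conceptual but bookkeeping: pinning down precisely how ``no terminal leaves'' and ``every point in two domains'' translate into the two graph-theoretic conditions, and making sure the base case $i=0$ genuinely uses the heart hypothesis (on a general compact subtree $K$, $\Gamma_0$ could well have valence-$1$ vertices). Once the correspondence between the local structure of $F_i$ at a component and the valence of the corresponding vertex of $\Gamma_i$ is spelled out, both parts are direct consequences of the cited results, so I would keep the proof short and lean on \cite{CH10}.
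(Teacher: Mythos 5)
Your proposal is correct and takes essentially the same approach as the paper: the paper offers no independent argument for this lemma, simply asserting that it follows from \cite[Propositions~3.12, 3.13, and 5.6]{CH10}, and your write-up is a faithful unpacking of exactly that citation (part (ii) via the homotopy equivalence of the suspensions, part (i) via the ``every point in at least two domains'' property). The only minor remark is that for $i=0$ the valence claim is immediate without invoking the heart hypothesis, since $K_A$ is connected and $\Gamma_0$ is a rose with $N$ petals.
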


Note that, as $F_0=K_A$ is connected, $\Gamma_0$ is a rose with $N$
petals, so Lemma~\ref{L.TauIsHE} gives a uniform bound $2N-2$ on the
number of vertices of valence strictly greater than two in $\Gamma_i$.

\begin{lem}\label{L.Lev}
  Let $T \in \barcvn$ be free with dense orbits, and suppose
  that $T$ is of Levitt type.  If $L_0 \subsetneq L_r(T)$ is a
  proper sublamination, then every leaf of $L_0$ is carried by a
  proper free factor of $\FN$.
\end{lem}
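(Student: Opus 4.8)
The plan is to exploit the inductive structure provided by the Rips machine: in the Levitt type case the graphs $\Gamma_i$ stabilize in homotopy type (Lemma~\ref{L.TauIsHE}) but the forests $F_i$ shrink strictly, and the limit set $\Omega = \cap_i F_i$ is totally disconnected. Fix a proper sublamination $L_0 \subsetneq L_r(T)$, and pick a leaf $l_r \in L_r(T) \ssm L_0$ together with a leaf $l \in L_0$; I want to show $l$ is carried by a proper free factor. The idea is that since $l_r$ is regular, the points $Q^2(l_n)$ of a sequence $l_n \to l_r$ are distinct, so $l_r$ ``uses'' a part of $\Omega$ (equivalently a branch of $\Gamma_i$ for $i \gg 0$) that accumulates onto a genuine point of the totally disconnected limit set; whereas the complement of an $\FN$-orbit of such branches should carry the rest.

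First I would set up the combinatorial dictionary: by Proposition~\ref{P.HeartProperties}(ii) and Lemma~\ref{L.NonSingRips}, $L_0 \cap C_A$ and $L_r(T) \cap C_A$ are described by bi-infinite reduced paths in the common graph $\Gamma_i$ (for every $i$), and by Lemma~\ref{C.LdBasisInd} the regular/derived sublamination is exactly $L(T)'$. Because $\Gamma_i$ has a uniformly bounded number of branch vertices and $\tau_i$ are homotopy equivalences, the marking $\pi_1(\Gamma_i) \simeq \FN$ is preserved. Next, since $L_0$ is properly contained in $L_r(T)$, there is some index $i_0$ and an edge (band) of $\Gamma_{i_0}$ — or more precisely a ``turn'' at a switch — that is crossed by some leaf of $L_r(T)$ but by no leaf of $L_0$; equivalently, after passing far enough down the Rips sequence, there is a band $B$ of $\mathscr{S}_i$ whose interior meets no leaf of $L_0$. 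The key point is then that the union of bands crossed by leaves of $L_0$, together with the forest, has fundamental group a proper subgroup of $\FN$, and one must check this subgroup is a \emph{free factor}. For that I would invoke the standard fact that a connected subgraph (or a subgraph obtained by deleting edges) of a graph with a marking carries a free factor; alternatively, run the Rips machine / subgraph argument so that the carrying subgraph is an immersed subgraph of $\Gamma_i$, hence $\pi_1$ injects as a free factor by a Stallings-type argument. Since $l \in L_0$ is a bi-infinite path staying inside this proper subgraph, $l$ is carried by the corresponding proper free factor.

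The main obstacle I anticipate is the step asserting that the bands not crossed by $L_0$ form a \emph{nonempty} collection whose removal still leaves a connected-enough subgraph carrying a free factor — in other words, ruling out that deleting those bands disconnects $\Gamma_i$ in a way that only shows $L_0$ is carried by a subgroup, not a free factor, and ensuring the deleted set is genuinely nonempty. Nonemptiness should follow from $L_0 \subsetneq L_r(T)$ together with the fact (Levitt type, $\Omega$ totally disconnected) that $L_r(T)$ ``spreads out'' over all of $\Gamma_i$ for large $i$: if every band of $\Gamma_i$ were crossed by $L_0$ for all large $i$, a limiting/minimality argument using that leaves of $L_r(T)$ approximate all branches of $\Gamma_i$ would force $L_0 = L_r(T)$, a contradiction. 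The free-factor conclusion should then be handed off to Proposition~\ref{prop:transverse}-style technology or a direct graph-of-groups argument; I would phrase it via an immersed proper subgraph of some $\Gamma_i$ whose fundamental group is automatically a free factor of $\pi_1(\Gamma_i) = \FN$.
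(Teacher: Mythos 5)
Your overall strategy is the same as the paper's: locate a portion of $\Gamma_i$ that no leaf of $L_0$ crosses, delete it, and conclude that $L_0$ is carried by a proper subgraph, hence by a proper free factor (this last step is fine, via Lemma~\ref{L.TauIsHE} and the absence of valence~$\leq 1$ vertices). The problem is that the step you yourself flag as ``the main obstacle'' --- the existence of an edge of some $\Gamma_{i_0}$ crossed by a regular leaf but by no leaf of $L_0$ --- is the entire content of the lemma, and the justification you offer for it (``if every band were crossed by $L_0$, a limiting/minimality argument would force $L_0=L_r(T)$'') is not an argument but a restatement of what must be proved. A priori, a proper closed $\FN$-invariant sublamination could perfectly well cross every edge of every $\Gamma_i$: compare a proper subshift of a subshift, which can use every word up to any fixed length. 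What must be shown is that the edges of $\Gamma_i$ eventually \emph{encode} arbitrarily long words of $\Gamma_0$, so that crossing a single edge forces a leaf to spell out a word forbidden for $L_0$; this is quantitative and does not follow from a soft compactness argument.

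The paper's mechanism, which your sketch does not supply, runs as follows. Pick a \emph{regular} leaf $l\in L_r(T)\ssm L_0$ and choose $M$ so that no leaf of $L_0$ contains the word $l|_{[-M,M]}$ (possible since $L_0$ is closed and $l\notin L_0$). Regularity gives leaves $l_n\to l$ with $l_n|_{[-n,n]}=l|_{[-n,n]}$ and with the points $x_n=\CQ^2(l_n)$ pairwise distinct; total disconnectedness of $\Omega$ then separates $x_1,\dots,x_m$ into distinct components of $F_i$ for $i\geq i(m)$, producing $m$ \emph{distinct} reduced paths in $\Gamma_i$, all labelled $l|_{[-M,M]}$. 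Freeness forces every reduced loop of $\Gamma_i$ to have length $>2M$ for $i$ large, and $\Gamma_i$ has at most $2N-2$ branch vertices, so only boundedly many vertices of $\Gamma_i$ lie within distance $M$ of a branch vertex; taking $m$ large, one of these $m$ paths is an embedded arc in the interior of a single topological edge of $\Gamma_i$. Only then does your desired conclusion hold: any bi-infinite reduced path crossing any edge of that arc must traverse the whole arc, hence contains the forbidden word $l|_{[-M,M]}$ and cannot lie in $L_0$. Your proposal has the right ingredients in view (regularity, total disconnectedness, the bound on branch vertices) but never assembles them --- in particular you never introduce the forbidden word $l|_{[-M,M]}$ or explain why crossing one edge of $\Gamma_i$ forces a leaf to contain it --- so as written the proof has a genuine gap at its central step.
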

\begin{proof}
  Fix a basis $A$ for $\FN$, and let $S_0=(K_A, A)$ be the associated
  system of isometries; let $S_i=(F_i, A_i)$ denote the output of the
  $i^{th}$ iteration of the Rips machine, and let $\Gamma_i$ denote
  the graph associated to $S_i$.  Recall that since $T$ is of Levitt
  type, the limit set $\Omega$ is totally disconnected, hence the
  number of vertices of $\Gamma_i$ goes to infinity with $i$.  Let $l$
  be a bi-infinite admissible leaf path in $L_r(\mathscr{S}_0)\ssm
  L_0$. There exists a sequence $l_n$ in $L(\mathscr{S}_0)$ converging
  to $l$ such that the $x_n=\CQ^2(l_n)$ are distinct and distinct from
  $x=\CQ^2(l)$.  Additionally, we can assume that
  $l_n|_{[-n,n]}=l|_{[-n,n]}$ (viewed as admissible paths in
  $\Gamma_0$).  As ${\Omega}$ is totally disconnected, for any $m$,
  there is $i(m)$ such that $x_n$ lie in separate components of $F_i$
  for $n \leq m$ and $i \geq i(m)$.  Also, since $l \notin L_0$, there
  is $M$ such that
  \[
  \{l' \in L_0\ |\ l'|_{[-M,M]}=l|_{[-M,M]}\}=\emptyset.
  \]
  We now apply Lemma~\ref{L.NonSingRips} to view the leaves $l_n$ as
  bi-infinite admissible leaf-paths in the suspensions
  $\mathscr{S}_i$.

  For a given $m$ the leaves $l_n$, $n\leq m$, define distinct
  admissible paths $l_m|_{[-M;M]}$ in $\Gamma_{i(m)}$ each of length
  $2M$.  As the action of $\FN$ on $T$ is free there exists $j(M)$
  such that for $i\geq j(M)$ the size of any reduced loop in
  $\Gamma_i$ is strictly bigger than $2M$. Recall that there are at
  most $2N-2$ vertices of valence strictly bigger than two in
  $\Gamma_i$. Thus for $m$ large enough, for any $i$ bigger than
  $i(m)$ and $j(M)$, there exists $n\leq m$ such that the admissible
  reduced path $l_n|_{[-M;M]}$ in $\Gamma_i$ does not cross any vertex
  of valence strictly greater than two.

  Note that no leaf of $L_0$ could cross any edge in the image of
  $l_n|_{[-M,M]}$ in $\Gamma_{i}$; thus every leaf of $L_0$ is
  contained the the subgraph $G_0:=(\Gamma_{i}\ssm
  Im(l_n|_{[-M,M]})$.  By Lemma~\ref{L.TauIsHE}, we have that $G_0$
  corresponds to a proper free factor $H \leq \FN$ and every leaf of
  $L_0$ is carried by $H$.
\end{proof}

We now have our first main  result.

\begin{prop}\label{P.LevMin}
  Let $T \in \barcvn$, and assume that $T$ is free and
  indecomposable and of Levitt type.  The regular sublamination
  $L_r(T)\subseteq L(T)$ is minimal.
\end{prop}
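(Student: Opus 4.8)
The plan is to show that any nonempty sublamination $L_0 \subseteq L_r(T)$ must actually equal $L_r(T)$, using Lemma~\ref{L.Lev} to derive a contradiction from properness. Suppose toward a contradiction that $L_0 \subsetneq L_r(T)$ is a proper sublamination. Since $L_r(T) = L(T)'$ (Lemma~\ref{C.LdBasisInd}), every leaf of $L_0$ is a leaf of $L(T)$. We may, after passing to a sublamination, assume $L_0$ is minimal (every lamination contains a minimal one, by a Zorn/compactness argument on the closed invariant subsets). By Lemma~\ref{L.Lev}, every leaf of $L_0$ is carried by a proper free factor of $\FN$; but a proper free factor is finitely generated of infinite index in $\FN$. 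This is where the characterization of indecomposability enters.

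The key step is to invoke Proposition~\ref{P.Indecomp}: since $T$ is free and indecomposable, a finitely generated subgroup $H \leq \FN$ carries a leaf of $L(T)$ if and only if $H$ has finite index. A proper free factor has infinite index, so it cannot carry any leaf of $L(T)$. This directly contradicts the conclusion of Lemma~\ref{L.Lev} applied to any leaf $l \in L_0$ (note $L_0$ is nonempty as a lamination by definition). Hence no proper nonempty sublamination of $L_r(T)$ exists, which is exactly the statement that $L_r(T)$ is minimal — provided we also check $L_r(T)$ is itself nonempty and is genuinely a lamination.

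The remaining bookkeeping is to verify that $L_r(T)$ is nonempty and closed, so that the word ``minimal'' makes sense. Nonemptiness follows because $T$ is indecomposable, hence mixing, hence has dense orbits, and the dual lamination $L(T)$ of such a tree is nonempty and in fact perfect in the relevant cases; more concretely, since $T$ is of Levitt type the heart $K_A$ is not a finite tree, the number of vertices of $\Gamma_i$ grows without bound, and the inverse limit description of $L(T)$ forces $L(T)$ to have no isolated leaves carried by the whole group, so $L(T)' = L_r(T) \neq \emptyset$. Closedness of $L(T)'$: the derived set of the closed set $L(T)$ is closed, and it is visibly $\FN$-invariant and $i$-invariant, so it is a lamination. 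Alternatively one cites Lemma~\ref{C.LdBasisInd}, which already presupposes $L_r(T)$ is a sublamination.

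The main obstacle I anticipate is not the logical skeleton — which is short — but rather making sure the reduction ``pass to a minimal sublamination'' is legitimate and that Lemma~\ref{L.Lev}'s hypothesis ``$L_0 \subsetneq L_r(T)$'' is correctly matched: we need $L_0$ proper \emph{inside $L_r(T)$}, not merely inside $L(T)$. If $L_r(T)$ itself contained a proper minimal sublamination $L_0$, then $L_0 \subsetneq L_r(T)$ and Lemma~\ref{L.Lev} applies; if instead $L_r(T)$ is already minimal we are done. So the dichotomy is clean. The one genuine subtlety is confirming that ``minimal up to the trivial obstruction'' here really means honest minimality of $L_r(T)$, i.e.\ that Lemma~\ref{L.Lev} plus Proposition~\ref{P.Indecomp} leave no room for $L_0$; this is immediate once phrased as above. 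I would write the proof in three or four sentences.

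=== PROOF ===
\begin{proof}
  Fix a basis $A$ for $\FN$. By Lemma~\ref{C.LdBasisInd}, the
  regular sublamination $L_r(T)$ equals the derived lamination
  $L(T)'$, and in particular it is a (nonempty) lamination: as $T$ is
  of Levitt type, the heart $K_A$ is not a finite tree, so by
  Lemma~\ref{L.TauIsHE} and the discussion following it the number of
  vertices of $\Gamma_i$ tends to infinity, whence $L(T)$ has
  non-isolated leaves and $L(T)'\neq\emptyset$.

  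Suppose, for contradiction, that $L_r(T)$ is not minimal. Then it
  contains a proper sublamination $L_0\subsetneq L_r(T)$. By
  Lemma~\ref{L.Lev}, every leaf of $L_0$ is carried by a proper free
  factor of $\FN$. A proper free factor $H\leq\FN$ is finitely
  generated and of infinite index in $\FN$. But $T$ is free and
  indecomposable, so by Proposition~\ref{P.Indecomp} a finitely
  generated subgroup of $\FN$ carries a leaf of $L(T)$ if and only if
  it has finite index in $\FN$. Since $L_0$ is a lamination, it is
  nonempty, so it has a leaf carried by some proper free factor $H$,
  contradicting Proposition~\ref{P.Indecomp}.

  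Therefore $L_r(T)$ admits no proper sublamination, i.e.\ $L_r(T)$ is
  minimal.
\end{proof}
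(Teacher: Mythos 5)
Your proof is correct and follows exactly the paper's argument: a proper sublamination $L_0\subsetneq L_r(T)$ would, by Lemma~\ref{L.Lev}, have every leaf carried by a proper free factor, contradicting Proposition~\ref{P.Indecomp}. The extra remarks on nonemptiness of $L_r(T)$ are harmless additions the paper omits.
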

\begin{proof}
  According to Lemma~\ref{L.Lev}, if there happened to be a proper
  sublamination $L_0 \subsetneq L_r(T)$, then every leaf of $L_0$
  would be carried by a proper free factor $H$ of $\FN$.  This is
  impossible by Proposition~\ref{P.Indecomp}.
\end{proof}

\section{Splitting}\label{S.Splitting}

In this section we define an inductive procedure that allows us to
study the dual lamination of a free, surface type tree.  To define
this procedure, which is a generalization of the classical Rauzy-Veech
induction, we need to find ``good'' singularities in systems of
isometries associated to trees of surface type.  Toward that end we
recall some results regarding indices of trees.

\subsection{$Q$-Index and Geometric Index}

Let $T \in \barcvn$ have dense orbits, and let $Q: \partial
\FN \rightarrow \That=\bar T\cup \partial T$ be the map defined in
Section~\ref{SS.Q}.  For $x \in \overline{T}$, let $\Stab(x) \leq \FN$
denote the stabilizer of $x$.  It was shown in \cite{GL95} that there
are finitely many orbits of points in $T$ with non-trivial stabilizer
and that $\Stab(x)$ is finitely generated.  Note that for $x
\in \That\ssm T$, $\Stab(x)$ is always trivial.  From the definition
of $Q$, one sees that $\partial\Stab(x) \subseteq Q^{-1}(x)$; put
$Q^{-1}_r(x):=Q^{-1}(x) \ssm \partial\Stab(x)$.  Evidently, $\Stab(x)$
acts on $Q^{-1}(x)$, leaving invariant $Q^{-1}_r(x)$.  For $x \in
\That$, the \textbf{$Q$-index} of $x$ is
\[
 \ind_Q(x):=|Q^{-1}_r(x)/\Stab(x)| + 2 \Rank(\Stab(x)) -2
\]
The $Q$-index is constant on $\FN$-orbits in $T$, and the
\textbf{$Q$-index of $T$} is
\[
 \ind_Q(T):=\sum_{[x] \in \That/\FN} \max \{0,\ind_Q(x)\}
\]

As $Q$ is injective on $Q^{-1}(\partial T)$, only points of $\bar T$
contribute to the $Q$-index of $T$.  The following is established in
\cite{CH10}:

\begin{thm}[{\cite[Theorem 5.3]{CH10}}]\label{T.QIndex}
  Let $T \in \barcvn$ have dense orbits.  Then $\ind_Q(T) \leq
  2N-2$. Moreover, $T$ is surface type if and only if
  $\ind_{Q}(T)=2N-2$.\qed
\end{thm}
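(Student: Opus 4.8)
The plan is to transport $\ind_Q(T)$ into the combinatorial world of the system of isometries $S=(K_A,A)$ attached to a basis $A$, and then to read off the bound $2N-2$ from an Euler-characteristic count run through the Rips machine. First I would fix $A$, form the heart $K_A$, its suspension $\mathscr{S}$, and the dual tree $T_{\mathscr S}$, which satisfies $T_{\mathscr S}^{\min}=T$ by Proposition~\ref{P.HeartProperties}. The key dictionary entry is Proposition~\ref{P.HeartProperties}(iii): an infinite admissible leaf ray $X$ of $\mathscr{S}$ has $\dom(X)=\{Q(X)\}$, so a point $x$ of the limit set $\Omega_A$ ``sees'' exactly the half-leaves of the foliation on the universal cover of $\mathscr{S}$ converging to it, and these are in bijection with the points of $Q^{-1}(x)\cap\partial\FN$ carried by lines through the base vertex. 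Using this I would identify $\ind_Q(x)$ with a local Euler-characteristic-type quantity at $x$: the number of $\Stab(x)$-orbits of germs of directions at $x$ that are ``filled'' by the lamination, corrected by $2\Rank(\Stab x)-2$ for the vertex group --- the same shape as the Gaboriau--Levitt geometric index, but counting only leaf-filled directions and so ignoring the $1$-ended-leaf hairs of $T_{\mathscr S}$ (indeed the points of $\partial T$, on which $Q$ is injective, contribute nothing). Establishing this matching --- an index defined through fibers of $Q$ on $\partial\FN$ against one defined through directions and ends of leaves --- is the first technical step; it rests on $|Q^{-1}_r(x)/\Stab(x)|$ being finite, which I would record beforehand as a separate lemma: those germs inject into the directions at a point of the compact $\R$-tree cut out by $S$, in number controlled by the combinatorics of $S$.

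For the upper bound I would iterate the Rips machine, obtaining systems $S_i=(F_i,A_i)$ and graphs $\Gamma_i$ with homotopy equivalences $\tau_i:\Gamma_i\to\Gamma_{i-1}$ (Lemma~\ref{L.TauIsHE}). Since $\Gamma_0$ is a rose with $N$ petals and no $\Gamma_i$ has a vertex of valence $\le 1$, the identity $\sum_{v\in V(\Gamma_i)}(\mathrm{val}(v)-2)=-2\chi(\Gamma_i)=2N-2$ holds at every stage. By Lemma~\ref{L.NonSingRips} the Rips machine leaves the bi-infinite leaves, and hence $\ind_Q(T)$, unchanged; and each point $x$ with $\ind_Q(x)>0$ is ``detected'' at some finite stage $i$, in the sense that its half-leaves split into distinct germs at a single vertex $v_x$ of $\Gamma_i$ --- of valence $\ge 3$, or a loop-vertex carrying $\Stab(x)$ --- with $\ind_Q(x)\le\mathrm{val}(v_x)-2$ in the graph-of-groups sense. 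Once $i$ is large enough the assignment $[x]\mapsto v_x$ becomes injective on the orbits with $\ind_Q(x)>0$, so summing against the displayed identity gives $\ind_Q(T)\le 2N-2$. Making ``detected at a finite stage'' uniform --- picking one $i$ that serves all relevant orbits at once and ruling out collisions at a common vertex --- is the step I expect to be most delicate; it is the index-tracking refinement of the counting argument already used in the proof of Lemma~\ref{L.Lev}.

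For the equality statement, the surface-type direction is essentially a computation: if $T$ is of surface type the Rips machine halts, so after finitely many lamination-preserving steps $\mathscr{S}$ is the suspension of an interval-exchange-type system, i.e.\ a compact surface-with-boundary carrying a measured foliation with finitely many singularities, and the classical singularity-index count for that foliation gives exactly $2N-2$, which matches $\ind_Q(T)$ by the dictionary above. For the converse I want: a non-surface-type $T$ with dense orbits has $\ind_Q(T)\le 2N-3$. Here the Rips machine never halts, and I would argue that at every sufficiently large stage $i$ there is a vertex $v$ of $\Gamma_i$ of valence $\ge 3$ whose splitting germs do \emph{not} all lift to distinct points of $\That$ --- the fresh branch structure the machine keeps producing is ``phantom'', unmatched by any $x$ with $\ind_Q(x)>0$ --- so the unit $\mathrm{val}(v)-2\ge 1$ it occupies in $\sum_v(\mathrm{val}(v)-2)=2N-2$ is wasted, forcing $\ind_Q(T)\le 2N-3$. (This can also be seen through Gaboriau--Levitt's index inequality~\cite{GL95} applied to geometric truncations of $K_A$, combined with the fact that a dense-orbit tree is geometric precisely when it is surface type.) Locating exactly where the index escapes when the Rips machine fails to halt is the main obstacle of the whole argument, and it is precisely the reason the Rips-machine bookkeeping, rather than a single Euler-characteristic count on $T$ itself, is indispensable.
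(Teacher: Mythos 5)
First, a structural point: the paper does not prove this statement at all --- it is imported verbatim from \cite[Theorem 5.3]{CH10} and closed with a \verb|\qed|, so there is no internal argument to compare against. Your overall strategy (transport $\ind_Q$ into the system of isometries $S=(K_A,A)$, run the Rips machine, and play the local indices off against the Euler--characteristic identity $\sum_v(\mathrm{val}(v)-2)=2N-2$ on the graphs $\Gamma_i$) is indeed the shape of the argument in \cite{CH10}, so the plan is pointed in the right direction.

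But as a proof it has genuine gaps and one outright error. The error: your parenthetical ``a dense-orbit tree is geometric precisely when it is surface type'' is false, and the paper itself keeps these notions carefully separate --- geometric means the heart $K_A$ is a finite tree and is detected by $\indgeom(T)=2N-2$ (Theorem~\ref{T.GeomIndex}), while surface type means the Rips machine halts and is detected by $\ind_Q(T)=2N-2$; thin-type geometric trees are of Levitt type, and the proof of Proposition~\ref{prop:splittingPointExist} in this paper explicitly treats ``surface type and geometric'' as a special coincidence to be contradicted. So the alternative route you offer for the converse does not exist. Relatedly, your ``dictionary'' step silently assumes that distinct points of $Q^{-1}_r(x)$ give distinct leaf-filled germs of directions, i.e.\ essentially $\ind_Q(x)\le\indgeom(x)$ in general; the paper's Lemma~\ref{L.BoundQFiber} proves such an inequality only under strong hypotheses (surface type, every point of the orbit non-extremal in every base), and it is exactly the failure of this identification in general that makes the two indices different. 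Finally, the two steps you yourself flag as delicate --- (i) that each orbit's $Q$-index is eventually fully captured at a single vertex of some $\Gamma_i$ with no collisions between orbits, and (ii) that a non-halting Rips machine forces a strict deficit $\ind_Q(T)\le 2N-3$ --- are the actual content of the theorem, and ``the fresh branch structure is phantom, so a unit is wasted'' is a heuristic, not an argument (note also that when the machine halts the suspension need not be a surface: the bases are compact $\R$-trees, not intervals, so even your ``easy'' direction needs an index formula for a foliated $2$-complex, not the classical surface count). The correct mechanism, which you would need to supply, is a monotone index attached to the systems of isometries $S_i$ themselves, shown to be non-increasing under the Rips machine and to converge to $\ind_Q(T)$, with strict loss quantified at each non-halting step.
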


Let $T \in \barcvn$ have dense orbits, and let $x \in T$.
Then $\Stab(x)$ acts on $\pi_0(T\ssm\{x\})$.  Following
\cite{GL95}, one defines the \textbf{geometric index} of $x$ to be
\[
 \indgeom(x):=|\pi_0(T\ssm \{x\})/\Stab(x)| + 2\Rank(\Stab(x)) -2
\]

The geometric index is constant on $\FN$-orbits in $T$, and
one defines the \textbf{geometric index} of $T$ to be
\[
 \indgeom(T):=\sum_{[x] \in T/\FN} \indgeom(x)
\]

\noindent We have the following:

\begin{thm}[\cite{GL95}]\label{T.GeomIndex}
  Let $T \in \barcvn$.  Then $\indgeom(T) \leq 2N-2$. Moreover,
  $T$ is geometric if and only if $\indgeom(T)=2N-2$.\qed
\end{thm}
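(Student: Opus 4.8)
This is a theorem of Gaboriau and Levitt \cite{GL95}. The strategy I would follow is to establish the equality in the geometric case by a Poincar\'e--Hopf count on a foliated $2$-complex, and then deduce the inequality in general by geometric approximation.

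\emph{Step 1: the geometric case.} Suppose $T$ is geometric, so that for some basis $A$ the heart $K_A$ is a finite tree and, by Proposition~\ref{P.HeartProperties}, $T=T_{\mathscr{S}}^{\min}$ for the suspension $\mathscr{S}$ of $S=(K_A,A)$. After a standard reduction one may assume the band complex $\mathscr{S}$ is minimal (no band can be shortened or deleted without changing the dual tree), so that the foliation on $\mathscr{S}$ is minimal and $T$ is exactly the dual tree. Then $\mathscr{S}$ is a compact foliated $2$-complex which deformation retracts onto a graph with $\pi_1=\FN$, hence $\chi(\mathscr{S})=1-N$. The branch points of $T$, together with the points of $T$ carrying a non-trivial stabiliser, correspond exactly to the singular leaves of the foliation on $\mathscr{S}$: the quantity $|\pi_0(T\ssm\{x\})/\Stab(x)|$ counts the prongs of the corresponding singularity modulo its symmetry group, and $2\Rank(\Stab(x))$ records the handles absorbed by that symmetry. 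Cutting $\mathscr{S}$ open along the singular leaves (and along the spine of the finite tree $K_A$) and summing the local Euler contributions, exactly as in the theory of measured foliations on surfaces, yields $\indgeom(T)=\sum_{[x]}\indgeom(x)=-2\chi(\mathscr{S})=2N-2$.

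\emph{Step 2: the general case.} An arbitrary $T\in\barcvn$ decomposes, by Levitt's structure theorem, as a graph of actions whose vertex actions are either trivial (the simplicial part) or have dense orbits; the geometric index is additive over this decomposition, and simplicial trees are geometric with index exactly $2N-2$, so it suffices to treat $T$ with dense orbits. For such $T$ there is a geometric tree $\widehat T$ --- dual to the system of isometries associated to a sufficiently large finite subtree of $K_A$, or given by a Levitt--Paulin approximation --- together with a surjective $\FN$-equivariant morphism $\pi\colon\widehat T\to T$ which is an isometry on larger and larger subtrees as the approximation improves. The key lemma is that a morphism does not increase the geometric index: a fold can only identify directions at an image point and merge $\FN$-orbits of special points, and a direction-by-direction, stabiliser-by-stabiliser bookkeeping gives $\indgeom(T)\le\indgeom(\widehat T)=2N-2$. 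This bookkeeping is strict unless $\pi$ is injective, so $\indgeom(T)=2N-2$ forces $\pi$ to be an isometry, whence $T\cong\widehat T$ is geometric; the converse is Step 1.

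The main obstacle is Step 1: making the ``Poincar\'e--Hopf for foliated band complexes'' rigorous, i.e.\ carefully accounting for the contributions of the finite tree $K_A$, the band gluings, and the boundary leaves so that the local terms really add up to $-2\chi(\mathscr{S})$. The monotonicity lemma of Step 2 is the second delicate point, since one must control how the local index at a point changes when two of its preimages --- possibly carrying non-trivial, possibly conjugate stabilisers --- are separated by an unfolding.
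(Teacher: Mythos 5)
This theorem is imported from Gaboriau--Levitt \cite{GL95}; the paper states it with a \verb|\qed| and gives no proof, so there is nothing internal to compare against. Your outline does follow the genuine strategy of \cite{GL95} --- compute the index of a geometric tree by an Euler-characteristic count on a band complex, then treat a general $T$ via a resolution by geometric trees $\widehat T\to T$ --- so the architecture is right.

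However, the equality case of Step 2 has a real gap. The claim that your ``bookkeeping is strict unless $\pi$ is injective,'' so that $\indgeom(T)=2N-2$ forces the resolving morphism to be an isometry, is false: an equivariant fold between two free simplicial (hence geometric) $\FN$-trees is a non-injective morphism between trees that both have index exactly $2N-2$. The underlying reason is that the local index is \emph{not} monotone under a morphism: if $k$ orbits of points upstairs are identified to one orbit downstairs with $D'$ directions merging to $D\le D'$, then $\ind(f(w))=D-2$ while $\sum_j\ind(u_j)=D'-2k$, so identifying points \emph{raises} the local index by $2(k-1)$ even as identifying directions lowers it; a pointwise ``direction-by-direction, stabiliser-by-stabiliser'' comparison therefore does not close up, and even the inequality $\indgeom(T)\le\indgeom(\widehat T)$ requires the global lifting argument of \cite{GL95} (every direction at a branch point of $T$ and every generator of a point stabiliser is already detected, in distinct orbits, at a finite stage of the approximation), not a local count. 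Likewise, ``$\indgeom(T)=2N-2\Rightarrow T$ geometric'' is proved in \cite{GL95} by showing that when the full index $2N-2$ is already realised by finitely many orbits of points of $T$, the approximating sequence of geometric trees must stabilise --- a different and more delicate mechanism than strictness of a single fold. Step 1 is essentially correct in spirit (this is the Poincar\'e--Hopf-type computation for systems of isometries on finite trees), though ``the foliation on $\mathscr{S}$ is minimal'' is not something you can assume without first splitting off the simplicial and non-minimal components.
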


\subsection{Finding Splitting Points}

\begin{convention}
  If $T \in \barcvn$ is of surface type and if $B$ is a basis
  for $\FN$, we let $S=(K_B,B)$ denote the associated system of
  isometries.  In the sequel, we assume that that any system of
  isometries $S=(F,A)$ associated to a surface type action is obtained
  from some $S=(K_B,B)$ by running the Rips machine until it halts.
\end{convention}

A point $x$ is \textbf{extremal} in a tree $K$ if it is not contained
in the interior of an arc contained in $K$.  In this section we are
interested in points which are extremal in some bases of a system of
isometries but which are non-extremal in the underlying forest.

Let $S=(F,A)$ be a system of isometries. A partial isometry $a\in
A^{\pm 1}$ is \textbf{defined in direction} $d$ at a point $x\in F$
if $d$ is a direction in $\dom(a)$: $x\in\dom(a)$ and
$d\cap\dom(a)\neq\emptyset$.

\begin{prop}\cite[Proposition 4.3]{CH10}\label{prop:surfaceDirections}
  Let $T\in\barcvn$ be a tree with dense orbits of surface type and
  $S=(F,A)$ be a system of isometries associated to $T$.  For each
  direction $d$ at a point $x$ in $F$ there are exactly two partial
  isometries $a,b\in A^{\pm 1}$ defined in $d$.\qed
\end{prop}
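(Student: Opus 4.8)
The plan is to establish the two inequalities $n(x,d)\ge 2$ and $n(x,d)\le 2$, where for $x\in F$ and a direction $d$ at $x$ we write $n(x,d)$ for the number of partial isometries $a\in A^{\pm1}$ defined in $d$ at $x$. The first inequality is a soft pigeonhole argument; the second is where the surface-type hypothesis enters, through the $Q$-index.

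For the lower bound, I would first note that by the Convention above $S=(F,A)$ is obtained from some $(K_B,B)$ by running the Rips machine until it halts, so $F$ equals its own Rips-output $F'=\{y\in F:\exists\,a\ne a'\in A^{\pm1},\ y\in\dom(a)\cap\dom(a')\}$; hence for every $y\in F$ at least two distinct $a\in A^{\pm1}$ have $y\in\dom(a)$. Now fix $x\in F$ and a direction $d$ at $x$ in $F$, and choose a nondegenerate arc $[x,z]\subseteq F$ with $(x,z]\subseteq d$. Since each point of $(x,z]$ lies in at least two domains and $A^{\pm1}$ is finite, the pigeonhole principle yields $a\ne b\in A^{\pm1}$ and points $y_n\to x$ in $(x,z]$ with $y_n\in\dom(a)\cap\dom(b)$ for all $n$. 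Domains are closed convex subtrees of $F$, so $x\in\dom(a)$ and $[x,y_n]\subseteq\dom(a)$; hence $\dom(a)$ meets $d$, i.e.\ $a$ is defined in $d$ at $x$, and likewise for $b$. Therefore $n(x,d)\ge 2$.

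For the upper bound I would argue by contradiction: assuming $n(x,d)\ge 3$ for some point--direction pair, I would deduce $\ind_Q(T)<2N-2$, contradicting Theorem~\ref{T.QIndex} (which gives $\ind_Q(T)=2N-2$, as $T$ is of surface type). The combinatorial frame is that the graph $\Gamma$ associated to $S$ is connected with $\pi_1(\Gamma)\cong\FN$ and, by Lemma~\ref{L.TauIsHE}(i), has no vertex of valence $0$ or $1$; since $\sum_{v}\deg v=2|E(\Gamma)|$ and $|E(\Gamma)|-|V(\Gamma)|=N-1$ (as $\Gamma$ is connected with first Betti number $N$), one gets $\sum_{v\in V(\Gamma)}(\deg v-2)=2N-2$, where $\deg v=\#\{a\in A^{\pm1}:\dom(a)\subseteq F_v\}$ for the component $F_v$ of $F$ corresponding to $v$. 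Then I would appeal to a sharpening of the estimate $\ind_Q(T)\le 2N-2$ contained in the proof of \cite[Theorem 5.3]{CH10}: analyzing the local picture of the bands over each point $y\in F$ --- its Whitehead graph, that is, the directions at $y$ decorated by the band-germs entering them together with the way half-leaves continue through $y$ --- one sees that the $Q$-index contribution localized over $F_v$ is its ``valence budget'' $\deg v-2$ \emph{diminished} by a non-negative defect, and that each point--direction pair with $n(y,d)\ge 3$ adds at least one further unit to that defect, because an extra band-germ forces two half-leaves to separate instead of splicing into a regular bi-infinite leaf through $y$. Summing over the components of $F$ yields $\ind_Q(T)\le 2N-2-(\text{total defect})$, so $\ind_Q(T)=2N-2$ forces the total defect, hence $n(x,d)-2$, to vanish. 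Together with the lower bound this gives $n(x,d)=2$.

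The step I expect to be the main obstacle is exactly this sharpening: one must extract from the proof of the $Q$-index theorem not merely the global bound but the \emph{local} assertion that each direction at each point carries at most two band-germs, charged one unit of index-defect per excess germ. Concretely, this means carrying out the Poincar\'e--Hopf--type count for the foliated complex $\mathscr S$ --- equivalently, for the organization of the infinite admissible leaf-rays around each $F_v$, using Proposition~\ref{P.HeartProperties}(iii) to identify such rays with $Q$-preimages --- at the refined granularity of individual directions at individual points rather than aggregated per component. The pigeonhole lower bound and the identity $\sum_v(\deg v-2)=2N-2$ are routine by comparison.
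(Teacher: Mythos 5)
This proposition is imported verbatim from \cite[Proposition~4.3]{CH10}; the paper offers no proof of its own, so your proposal must stand on its own. Your lower bound $n(x,d)\ge 2$ is correct and complete: by the Convention the Rips machine has already halted, so $F=F'$ and every point of $(x,z]\subseteq d$ lies in at least two domains; the pigeonhole extraction of a fixed pair $a\ne b$ whose domains contain a sequence $y_n\to x$ inside $d$, followed by closedness and convexity of domains to get $x\in\dom(a)$ and $\dom(a)\cap d\ne\emptyset$, is sound.

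The upper bound is where the surface-type hypothesis does all the work, and there you have a genuine gap rather than a proof. Everything hinges on the asserted ``sharpening'' of $\ind_Q(T)\le 2N-2$: that the index count can be localized so that each point--direction pair with $n(y,d)\ge 3$ charges at least one unit of defect against the equality $\ind_Q(T)=2N-2$. You state this, correctly identify it as the main obstacle, and do not prove it. As formulated it is not even clearly well-posed: $\ind_Q(T)$ is a sum over $\FN$-orbits of points of $\overline T$ (only the finitely many orbits with at least three $Q$-preimages contribute), whereas your budget $\sum_v(\deg v-2)=2N-2$ is a sum over components of $F$; a single orbit meets infinitely many components and each component meets uncountably many orbits, so ``the $Q$-index contribution localized over $F_v$'' needs a definition before it can be diminished by anything. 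The mechanism that would make the strategy work is the orbitwise comparison of Lemma~\ref{L.BoundQFiber}: the ends of $\Gamma_x$ compute $\ind_Q(x)$, the components of the direction graph $\Gamma_x^d$ compute $\indgeom(x)$, and the valence of a vertex $d$ of $\Gamma_x^d$ is exactly your $n(y,d)$ --- but that lemma takes the present proposition as an input to conclude that $\Gamma_x^d$ is a union of lines. To run the argument in reverse you would have to show that a vertex of valence $\ge 3$ in some $\Gamma_x^d$ forces a strict global drop in the index balance of \cite[Theorem~5.3]{CH10}, and that is precisely the step that is missing. The proposal is therefore a plausible outline (likely close in spirit to what \cite{CH10} actually does), not a proof.
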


In the surface type case, we can locally compare the geometric and $\CQ$-indices.

\begin{lem}\label{L.BoundQFiber}
  Let $T \in \barcvn$ be free with dense orbits of surface
  type, and let $S=(F,A)$ be a system of isometries associated to $T$.
  Let $x \in F$ lie in the intersection of at least three distinct
  bases.  If every point of the $S$-orbit of $x$ is non-extremal in
  all bases that contain it, then $\ind_\CQ(x)\leq\indgeom(x)$.
\end{lem}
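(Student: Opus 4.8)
The plan is to relate the two indices at $x$ by comparing the two things they count: the number of directions at $x$ modulo $\Stab(x)$ (which governs $\indgeom$, since $\Stab(x)$ is trivial here — $T$ is free, $x\in F$) versus the number of points in $Q^{-1}_r(x)/\Stab(x)$ (which governs $\ind_Q$, again with trivial stabilizer). Since the action is free, both indices simplify: $\indgeom(x)=|\pi_0(\overline T\ssm\{x\})|-2$ and $\ind_Q(x)=|Q^{-1}_r(x)|-2=|Q^{-1}(x)|-2$ (as $\Stab(x)$ is trivial, $\partial\Stab(x)=\emptyset$). So the statement reduces to showing $|Q^{-1}(x)|\le |\pi_0(\overline T\ssm\{x\})|$, i.e. that there is an injection from preimages of $x$ under $Q$ into directions at $x$ in $\overline T$.

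First I would set up the correspondence between leaves through $x$ and directions. By Proposition~\ref{P.HeartProperties}(iii), for an infinite admissible leaf path $X$ in $\mathscr S$ one has $\dom(X)=\{Q(X)\}$; so each $Y\in Q^{-1}(x)$ corresponds to an infinite reduced path (a half-leaf) in the system of isometries $S=(F,A)$ whose nested domains shrink to $x$. I want to attach to each such half-leaf the direction at $x$ it ``exits into.'' Concretely, the first letter $a\in A^{\pm1}$ of the path $Y$ is a partial isometry with $x\in\dom(a)$; the half-leaf then reads $x, xa^{-1}=x a\inv,\ldots$ (or rather the suspension leaf travels across the band of $a$), and this picks out a germ of arc at $x$ inside $\dom(a)$, hence a direction $d(Y)$ at $x$ in $F$ — and thus a direction at $x$ in $\overline T$ since $F=K_A\subseteq\overline T$. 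The key point is to show $Y\mapsto d(Y)$ is injective on $Q^{-1}(x)$. This is where Proposition~\ref{prop:surfaceDirections} enters: at each direction $d$ at $x$ there are \emph{exactly two} partial isometries $a,b\in A^{\pm1}$ defined in $d$, so at most two half-leaves can begin by exiting $x$ into a fixed direction $d$; in the worst case, the map $Y\mapsto d(Y)$ is two-to-one. To get an honest injection — and only the inequality, not equality — I would use the hypotheses: $x$ lies in at least three distinct bases, and every point of the $S$-orbit of $x$ is non-extremal in every basis containing it. Non-extremality means that when a half-leaf exits $x$ into direction $d$ via partial isometry $a$, the image point $x\cdot a$ (or the continuation) is again non-extremal in the base it lands in, so the leaf genuinely continues and is not forced to ``turn around''; this lets me distinguish the two potential half-leaves exiting into $d$ by the \emph{next} direction they determine, or equivalently, it prevents a direction at $x$ from being ``used up'' by a single leaf bouncing back. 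Iterating: two half-leaves exiting $x$ into the same direction $d$ must eventually separate (else they'd define the same point of $Q^{-1}(x)$, contradiction, or they'd coincide as leaves), and the first place they separate is at some $S$-orbit point of $x$, again with two partial isometries available — a bookkeeping argument over the orbit, using that there are at least three bases at $x$, yields that the total count of preimages does not exceed the total count of directions.

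The main obstacle I anticipate is the injectivity/counting step: making precise the claim that the two-to-one possibility allowed by Proposition~\ref{prop:surfaceDirections} collapses to a one-to-one correspondence (or at worst stays under the direction count) once non-extremality is imposed along the whole orbit. The subtlety is that $Q$-preimages are asymptotic objects (whole half-leaves), while directions are local, so the argument must track how a pair of half-leaves sharing an initial direction behaves — they cannot merge (distinct points of $Q^{-1}(x)$) and cannot both ``close up'' without violating non-extremality, so one of the two slots provided by Proposition~\ref{prop:surfaceDirections} at a nearby orbit point of $x$ absorbs the discrepancy. I would organize this as: (1) reduce to $|Q^{-1}(x)|\le|\pi_0(\overline T\ssm\{x\})|$ using freeness; (2) define the direction map $d(\cdot)$ via Proposition~\ref{P.HeartProperties}(iii); (3) invoke Proposition~\ref{prop:surfaceDirections} to bound its fibers by $2$; (4) use non-extremality along the $S$-orbit of $x$ plus the ``at least three bases'' hypothesis to upgrade the bound to an injection, by charging each excess preimage to a distinct direction at another orbit-point of $x$ and transporting back via the partial isometries.
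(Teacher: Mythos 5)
Your reduction is right: with $\Stab(x)$ trivial the claim is $|\CQ^{-1}(x)|\le |\pi_0(\overline T\ssm\{x\})|$, and you have correctly identified the two inputs (Proposition~\ref{prop:surfaceDirections} and the non-extremality hypothesis). But the central counting step --- the actual content of the lemma --- is missing, and the mechanism you propose for it does not work. First, the map $Y\mapsto d(Y)$ is ill-defined under the very hypotheses of the lemma: since $x$ is non-extremal in $\dom(a_0)$ for the first letter $a_0$ of $Y$, the domain $\dom(a_0)$ meets at least \emph{two} directions at $x$, so the first letter does not select a direction; and since the nested domains $\dom(Y_i)$ shrink to $\{x\}$ (Proposition~\ref{P.HeartProperties}(iii)), no single direction need survive in the limit. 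Second, Proposition~\ref{prop:surfaceDirections} bounds the number of partial isometries defined in a given direction by two; it does not bound the number of half-leaves exiting into that direction, since arbitrarily many elements of $\CQ^{-1}(x)$ can share the same initial partial isometry and diverge later. So the claimed ``at most 2-to-1'' bound is unjustified, and the ``upgrade to an injection'' in step (4) is exactly the gap you flag yourself.

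The paper's proof runs the count in the opposite direction, globally over the $S$-orbit of $x$ rather than at $x$ alone. It introduces two infinite graphs: the orbit tree $\Gamma_x$ (vertices the $S$-orbit of $x$, edges the partial isometries; its ends are $\CQ^{-1}(x)$, finitely many by Theorem~\ref{T.QIndex}), and the direction graph $\Gamma_x^d$ (vertices the directions in $F$ at orbit points, edges $d\to d.a$; its connected components number $\indgeom(x)+2$). Proposition~\ref{prop:surfaceDirections} makes every vertex of $\Gamma_x^d$ have valence exactly two, so $\Gamma_x^d$ is a disjoint union of bi-infinite lines; non-extremality along the orbit guarantees that each edge of $\Gamma_x$ is covered by at least two edges of $\Gamma_x^d$, hence each end of $\Gamma_x$ is reached by at least two line-ends. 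Since each line has exactly two ends, the number of lines is at least the number of ends of $\Gamma_x$, i.e.\ $\indgeom(x)+2\ge \ind_\CQ(x)+2$. Note that the factor of two appears on both sides and cancels --- there is no injection from preimages to directions at $x$ to be extracted; the inequality comes from this global end-versus-component count.
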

\begin{proof}
  Let $\Gamma_x$ the (infinite) graph with vertices $V(\Gamma_x)$ the
  $S$-orbit of $x$ and with an edge labeled by $a\in A^{\pm 1}$
  between each pair of vertices $x.u$ and $x.ua$ with $u\in\FN$ a
  partial isometry defined at $x$.  As $T$ is free, the graph
  $\Gamma_x$ is a tree and from Proposition~\ref{P.HeartProperties},
  its space of ends can be identified with
  $\CQ\inv(x)\subseteq\partial\FN$. From Theorem~\ref{T.QIndex},
  $\Gamma_x$ has finitely many ends.

  Let $\Gamma_x^d$ be the (infinite) graph with vertices the
  directions in $F$ at points in the $S$-orbit of $x$ and with an edge
  labeled by $a\in A^{\pm 1}$ between each pair of vertices $d$ and
  $d.a$ (in particular $a$ is defined in $d$). The number of connected
  components in $\Gamma_x^d$ is $\indgeom(x)+2$. From
  Theorem~\ref{T.GeomIndex}, $\Gamma^d_x$ has finitely many connected
  components.

  By Proposition~\ref{prop:surfaceDirections}, $\Gamma_x^d$ is a
  disjoint union of bi-infinite lines. By our hypothesis on $x$, for
  each edge labeled by $a$ from $x.u$ to $x.ua$ in $\Gamma_x$ there
  are at least two edges labeled by $a$ in $\Gamma_x^d$ from $d_1$ to
  $d_1.a$ and from $d_2$ to $d_2.a$ where $d_1,d_2\in V(\Gamma_x^d)$
  are directions at $x.u$. 

  Each end of $\Gamma_x$ is reached by at least to bi-infinite lines
  in $\Gamma_x^d$ and a bi-infinite line has two ends, thus the number
  of lines in $\Gamma_x^d$ is bounded below by the number of ends in
  $\Gamma_x$:
  \[
  \ind_\CQ(x)\leq\indgeom(x).\qedhere
  \]
\end{proof}

\begin{defn}\label{def:splitting}
  A \textbf{splitting point} in a system of isometries $S=(F,A)$ is a
  point $x$ in the connected component $K_x$ in $F$ such that
  \begin{enumerate}
    \renewcommand{\theenumi}{(S\arabic{enumi})}
    \renewcommand{\labelenumi}{\theenumi}
  \item\label{defSplit:1} $x$ is not extremal in $K_x$;
  \item\label{defSplit:2} there exists a partial isometry $a_0\in A^{\pm 1}$ defined at
    $x$ such that $x$ is extremal in the base $\dom(a_0)$, and $\dom(a_0)$
    is not reduced to $\{x\}$. We denote by $d_x$ the unique
    direction at $x$ which meets $\dom(a_0)$. We call $d_x$ the
    \textbf{splitting direction}
  \item\label{defSplit:3} There exists exactly one other partial isometry $a_1\in A^{\pm
      1}\ssm\{a_0\}$ defined at $x$ and such that $\dom(a_1)$ meets
    $d_x$.
  \item\label{defSplit:4} The point $x$ is not extremal in $\dom(a_1)$.
  \end{enumerate}
\end{defn}  

\noindent From the previous Proposition we get that splitting points exists.

\begin{prop}\label{prop:splittingPointExist}
  Let $T\in\barcvn$ be free, indecomposable, and of surface type. Let $S=(F,A)$ be a system
  of isometries dual to $T$. There exists a splitting point $x$ in
  $S$.
\end{prop}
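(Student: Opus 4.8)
The plan is to locate a splitting point by a counting argument that plays the geometric index of $T$ against its $Q$-index, using the hypothesis that $T$ is of surface type (so both indices equal $2N-2$ by Theorems~\ref{T.QIndex} and \ref{T.GeomIndex}) and that $T$ is indecomposable (so no nontrivial transverse family exists, by Proposition~\ref{P.IndNoTF}). First I would argue that there must be a point $x$ in $F$ lying in the intersection of at least three distinct bases which \emph{is} extremal in some base that contains it. Indeed, suppose toward a contradiction that at every point of every $S$-orbit meeting three or more bases, the point is non-extremal in all bases containing it; then Lemma~\ref{L.BoundQFiber} applies at each such orbit and gives $\ind_Q(x)\le\indgeom(x)$. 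Summing over the relevant orbits (and noting that points in fewer than three bases contribute non-positively to $\ind_Q$, while such points contribute $0$ to $\indgeom$ when they are not branch points) yields $\ind_Q(T)\le\indgeom(T)$; combined with $\ind_Q(T)=2N-2=\indgeom(T)$ this forces equality orbit-by-orbit, which I would push to a contradiction with indecomposability — equality in the line-counting of Lemma~\ref{L.BoundQFiber}'s proof means the directions at $x$ organize into exactly the right number of $S$-orbit-lines, and this rigidity produces an $\FN$-invariant family of proper subtrees (the subtrees "spanned" by the extra directions), i.e.\ a transverse family, contradicting Proposition~\ref{P.IndNoTF}. Hence some such extremal-in-a-base point $x$ exists; this establishes \ref{defSplit:1} (after possibly passing along the $S$-orbit to a representative non-extremal in $K_x$, which exists since the action has dense orbits and $K_x$ is a genuine arc-containing tree) together with the existence of $a_0$ in \ref{defSplit:2}.

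Next I would verify \ref{defSplit:2} in full: having chosen $a_0\in A^{\pm1}$ with $x$ extremal in $\dom(a_0)$, I need $\dom(a_0)\neq\{x\}$. If $\dom(a_0)=\{x\}$ for \emph{every} partial isometry in which $x$ is extremal, then one can replace $a_0$ by a different base — here I would invoke the Convention that $S$ is obtained by running the Rips machine until it halts, so that $F=\Omega_A$ and every point of $F$ lies in at least two bases with nondegenerate intersection (this is exactly the halting condition, $F'=F$), which rules out all containing bases being trivial at $x$. So pick $a_0$ with $\dom(a_0)$ nondegenerate and $x$ extremal in it; let $d_x$ be the unique direction at $x$ meeting $\dom(a_0)$.

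For \ref{defSplit:3} and \ref{defSplit:4} I would use Proposition~\ref{prop:surfaceDirections}: at the direction $d_x$ at $x$ there are \emph{exactly two} partial isometries in $A^{\pm1}$ defined in $d_x$; one of them is $a_0$, call the other $a_1$. This immediately gives \ref{defSplit:3} (exactly one other partial isometry $a_1\neq a_0$ is defined at $x$ with $\dom(a_1)$ meeting $d_x$). For \ref{defSplit:4}, that $x$ is \emph{not} extremal in $\dom(a_1)$: if $x$ were extremal in $\dom(a_1)$ as well, then near $x$ the forest $F$ would locally consist only of the single direction $d_x$ (since by Proposition~\ref{prop:surfaceDirections} every direction at $x$ in $F$ is covered by exactly two partial isometries, and if all bases at $x$ other than along $d_x$ were trivial then $x$ would be extremal in $K_x$), contradicting \ref{defSplit:1}; more carefully, one uses that $x$ non-extremal in $K_x$ forces at least two directions at $x$ inside $F$, hence some base is nondegenerate in a direction $\neq d_x$, and tracing which partial isometries are defined there via Proposition~\ref{prop:surfaceDirections} shows $a_1$ must extend past $x$ along that direction — so $x$ is interior to $\dom(a_1)$.

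The main obstacle I expect is the contradiction step in the first paragraph: converting the orbit-wise equality $\ind_Q(x)=\indgeom(x)$ forced by the index identities into an actual transverse family. This requires understanding precisely the discrepancy measured in the proof of Lemma~\ref{L.BoundQFiber} — the lines of $\Gamma_x^d$ versus the ends of $\Gamma_x$ — and showing that when no splitting point exists, the "doubled" directions propagate coherently under the pseudo-group to carve out a proper invariant subtree. I would handle this by a direct construction: the non-existence of splitting points means that wherever a base is extremal at an orbit point, the configuration fails one of \ref{defSplit:2}--\ref{defSplit:4} in a way that either trivializes that base or forces $x$ non-extremal everywhere, and a careful bookkeeping shows the only way to keep $\ind_Q(T)=2N-2$ under these constraints is for $F$ to split as a transverse family, invoking Proposition~\ref{prop:transverse} in the contrapositive. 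The remaining verifications (\ref{defSplit:1}--\ref{defSplit:4}) are then routine applications of Proposition~\ref{prop:surfaceDirections} and the halting convention.
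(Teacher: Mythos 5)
There are two genuine gaps. First, your contradiction in the opening step is not carried out, and your starting premise is off: being of surface type gives $\ind_Q(T)=2N-2$ (Theorem~\ref{T.QIndex}), but it does \emph{not} give $\indgeom(T)=2N-2$; that equality characterizes \emph{geometric} trees (Theorem~\ref{T.GeomIndex}), which is a different condition. The correct logic is: if no point satisfies \ref{defSplit:1} and \ref{defSplit:2}, then Lemma~\ref{L.BoundQFiber} yields $\indgeom(T)\geq\ind_Q(T)=2N-2$, hence $\indgeom(T)=2N-2$ and $T$ is geometric. At that point the paper does not pass through a transverse family at all: geometric means the forest $F$ is a finite tree with finitely many extremal points, and the failure of \ref{defSplit:1}--\ref{defSplit:2} forces the partial isometries to permute this finite set of extremal points, contradicting \emph{freeness} of the action. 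Your proposed route --- extracting a transverse family from ``orbit-wise equality'' of the two indices --- is exactly the step you flag as the main obstacle, and it is indeed not a routine consequence of the line-counting in Lemma~\ref{L.BoundQFiber}; as written the proof is incomplete there.

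Second, your argument for \ref{defSplit:4} does not work. Proposition~\ref{prop:surfaceDirections} says each direction at $x$ carries exactly two partial isometries, but the two partial isometries defined in a direction $d'\neq d_x$ at $x$ (which exists by \ref{defSplit:1}) need not include $a_1$; there is no reason the pair covering $d'$ has anything to do with the pair $\{a_0,a_1\}$ covering $d_x$, so you cannot conclude that $a_1$ ``extends past $x$'' into $d'$. The paper handles this case differently and does use indecomposability here: if $x$ is extremal in both $\dom(a_0)$ and $\dom(a_1)$, then both domains meet only the direction $d_x$, and since $a_0,a_1$ are the only partial isometries meeting $d_x$, the point $x$ locally separates the suspension $\mathscr{S}$; this produces a proper free factor carrying every regular leaf, contradicting Proposition~\ref{P.Indecomp}. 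Your treatment of \ref{defSplit:3} via Proposition~\ref{prop:surfaceDirections} is correct and matches the paper.
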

\begin{proof}
  We first prove that there exist points satisfying
  conditions~\ref{defSplit:1} and \ref{defSplit:2}. 

  If there were no points satisfying conditions~\ref{defSplit:1} and
  \ref{defSplit:2} then we can use Proposition~\ref{L.BoundQFiber} to get  
  \[
  \indgeom(T)=\sum_{[x]\in T/\FN} \indgeom(x)\geq \sum_{[x]\in T/\FN}
  \ind_\CQ(x)=\ind_\CQ(T).
  \]
  As $T$ is of surface type, from Theorem~\ref{T.QIndex},
  $\ind_\CQ(T)=2N-2$ and by Theorem~\ref{T.GeomIndex} we get that $T$
  is geometric. By definition of geometric trees the forest $F$ has
  finitely many extremal points and as conditions~\ref{defSplit:1} and
  \ref{defSplit:2} fail, partial isometries send extremal points to
  extremal points, and the action is not free, a contradiction.

  Thus, there exists a point $x$ in $F$ and a partial isometry $a_0\in
  A^{\pm 1}$ satisfying conditions~\ref{defSplit:1} and
  \ref{defSplit:2}. As $T$ is of surface type, according to
  Proposition~\ref{prop:surfaceDirections} condition~\ref{defSplit:3}
  is satisfied. If condition~\ref{defSplit:4} does not hold then $x$
  locally separates the suspension $\mathscr{S}$ of $S$.  In this case there is a proper free factor $F'$ of $F_N$ that carries every leaf of $L_r(\mathscr{S})$, contradicting Proposition \ref{P.Indecomp}.
\end{proof}

\subsection{Splitting}

Let $S=(F,A)$ be a system of isometries, and let $\Gamma$ be its associated graph. Assume that $x$ is a
splitting point for $S$. We use the notations of 
Definition~\ref{def:splitting}.

We split the connected component $K_x$ of $x$ into two new compact
$\R$-trees: $K'=d_{x}\cup\{x\}$ and $K''=K_x\ssm d_{x}$.  We denote by
$F'$ the finite forest obtained by replacing $K_x$ by two disjoint
compact $\R$-trees $K'$ and $K''$ (in particular there are two copies
of $x$ in $F'$). Let $A_0$ be the set of partial isometries which do
not meet $x$:
\[
A_0=\{ a\in A\ |\ x\not\in \dom(a)\cup\dom(a\inv)\}.
\]
Let $a_0'$ be the same partial isometry as $a_0$ with domain in $K'$.
Let $a_1'$ be the restriction of $a_1$ to $K'$ and $a_1''$ be the
restriction of $a_1$ to $K''$.  For any other partial isometry $a\in
A^{\pm 1}\ssm\{a_0,a_1\}$ such that $x\in\dom(a)$ we let $a''$ be the
same partial isometry as $a$ defined on $K''$.  Put
$A'=A_0\cup\{a_0',a_1',a_1''\}\cup\{a''\ |\ a\in A^{\pm
  1}\ssm\{a_0,a_1\},\ x\in\dom(a)\}$.  We say that the system of isometries $S'=(F',A')$ is obtained from
$S=(F,A)$ by \textbf{splitting} at $x$ in the splitting direction $d_x$.

The suspension $\mathscr{S}'$ of $S'$ can be
``zipped-up'' to recover the suspension $\mathscr{S}$ of $S$: the map
$z:\mathscr{S}'\to\mathscr{S}$ which identify the leaves $\{(x,t)\ |\ t\in [0;1]\}$ in
the band $K'\times [0;1]$ with $\{(x,t)\ |\ t\in [0;1]\}$ in the band
$K''\times [0,1]$ is a homotopy equivalence.

\begin{lem}\label{lem:zippingLeaves}
  A regular bi-infinite admissible leaf $l$ in $\mathscr{S}$ can be
  lifted by $z$ to a regular bi-infinite admissible leaf in
  $\mathscr{S}'$.
\end{lem}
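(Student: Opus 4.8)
The plan is to work with the natural lift: since $z : \mathscr{S}' \to \mathscr{S}$ only identifies the two copies of the fiber over $x$ (the leaf $\{(x,t) : t \in [0,1]\}$ appearing in both bands $K' \times [0,1]$ and $K'' \times [0,1]$), the map $z$ is injective away from this identified fiber, and over the identified fiber it is exactly two-to-one. Thus a leaf path $l$ in $\mathscr{S}$ fails to lift uniquely only when it passes through the fiber over $x$, and at such a crossing one must choose to route $l$ through the $K'$-side or the $K''$-side. The key observation is that $l$ crosses the fiber over $x$ in a \emph{transverse} direction (it is a leaf path, i.e.\ locally isometric onto leaves of the form $\{pt\}\times[0,1]$, so it meets $F'$ only at integer times and crosses $x$ at such a time coming in along some partial isometry and leaving along another). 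By Proposition~\ref{prop:surfaceDirections}, since $T$ is surface type, at the point $x$ there are exactly two partial isometries defined in the splitting direction $d_x$ (namely $a_0$ and $a_1$), and the way $a_0, a_1$ were split in the construction of $S'$ — $a_0'$ lives on $K'$, $a_1''$ lives on $K''$, $a_1'$ on $K'$ — is precisely designed so that a leaf path approaching $x$ through $d_x$ and leaving through $d_x$ can be consistently routed.

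First I would reduce to understanding the local picture at a single crossing of the fiber over $x$ in $\mathscr{S}$, and enumerate the possible pairs (incoming partial isometry, outgoing partial isometry) for an admissible leaf path through $x$: the incoming/outgoing isometries are among the partial isometries of $A^{\pm 1}$ defined at $x$. Condition~\ref{defSplit:3} says exactly one partial isometry other than $a_0$, namely $a_1$, is defined in direction $d_x$, so a leaf path through $x$ that lies (on at least one side) in the direction $d_x$ must use $a_0^{\pm 1}$ or $a_1^{\pm 1}$ on that side; other partial isometries $a$ at $x$ point in directions other than $d_x$ and were all put onto $K''$ (the $a''$'s). The routing rule I would adopt: put the portion of $l$ lying in direction $d_x$ onto $K'$ and everything else onto $K''$; check this is consistent at $x$ for each admissible combination, i.e.\ the resulting sequence of partial isometries in $A'$ is still composable with nonempty domain. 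The place where regularity enters is condition~\ref{defSplit:4}: because $x$ is \emph{not} extremal in $\dom(a_1)$, the partial isometry $a_1$ extends past $x$ into $K''$ as well (giving $a_1''$), so a leaf that comes into $x$ via $a_1$ from the $d_x$-side and continues on the non-$d_x$ side still has a well-defined continuation; this is what fails when $x$ locally separates $\mathscr{S}$ (the excluded case in Proposition~\ref{prop:splittingPointExist}).

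The only remaining point — and the main obstacle — is to show that the lifted leaf $l'$ is again \emph{regular}, not merely admissible. Here I would use that $z$ is a homotopy equivalence of suspensions inducing an isomorphism on $\pi_1$, hence an $\FN$-equivariant homeomorphism between the corresponding pieces of $\partial^2 \FN$, carrying $L(\mathscr{S}') $ onto $L(\mathscr{S})$ (the splitting does not change bi-infinite admissible leaf paths, in the same spirit as Lemma~\ref{L.NonSingRips} for the Rips machine). Since regularity of a leaf $l$ is characterized (Lemma~\ref{C.LdBasisInd}, and the definition of regular via the derived space) as $l$ being a non-isolated point of $L(T)$ — equivalently, $l$ is a limit of leaves with pairwise distinct $Q^2$-images — and since $z$ is a homeomorphism on laminations preserving $Q^2$ (the dual tree $T_{\mathscr{S}'} = T_{\mathscr{S}} = T$ is unchanged, so the maps $Q^2$ agree under the identification), non-isolation of $l$ in $L(\mathscr{S})$ pulls back to non-isolation of $l'$ in $L(\mathscr{S}')$. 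I expect the delicate bookkeeping to be verifying that the approximating sequence $l_n \to l$ lifts \emph{compatibly} — i.e.\ that one can lift the $l_n$ so that $l_n' \to l'$ with the $Q^2(l_n')$ still distinct — which follows because only finitely many "routing choices" occur on any bounded window $[-M,M]$ and, by taking $n$ large with $l_n|_{[-M,M]} = l|_{[-M,M]}$ (as in the proof of Lemma~\ref{L.Lev}), the routing of $l_n'$ on $[-M,M]$ agrees with that of $l'$, forcing convergence.
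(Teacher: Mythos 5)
Your third paragraph contains the essential error: the claim that $z$ carries $L(\mathscr{S}')$ onto $L(\mathscr{S})$ ``in the same spirit as Lemma~\ref{L.NonSingRips}'' is not available and is in general false. Unlike the Rips machine, splitting \emph{does} change the set of bi-infinite admissible leaf paths: a leaf of $\mathscr{S}$ that passes through the point $x$ itself and turns there from the splitting direction $d_x$ into another direction via $a_0$ (say it uses a turn $\{a_0^{\pm 1},a^{\pm 1}\}$ with $a\notin\{a_0,a_1\}$) is cut into two half-leaves by the splitting, since $a_0'$ is defined only at the copy of $x$ in $K'$ while $a''$ is defined only at the copy in $K''$. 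Destroying such singular leaves is the whole point of the construction; if $z$ induced a homeomorphism of laminations, the lemma would hold for \emph{all} leaves and the hypothesis ``regular'' would be vacuous. For the same reason the routing rule of your second paragraph (``$d_x$-side to $K'$, everything else to $K''$'') is not well defined on a leaf that actually passes through the point $x$: in the case just described it sends the two halves of $l$ into different components of $F'$ and yields no admissible lift. The missing step is precisely the one where regularity must be used: showing that a \emph{regular} leaf never makes such a forbidden turn at $x$.

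The paper proves the lemma not by a local case analysis but by the approximation argument you only sketch at the very end. Since $l$ is regular, there are leaves $l_n\to l$ with the points $Q^2(l_n)$ pairwise distinct, hence (discarding at most one term) distinct from $x$; one then arranges that the finite leaf paths $l_n|_{[-n,n]}$ avoid the point $x$, so that they lift verbatim to $\mathscr{S}'$ (splitting modifies the system only at $x$), and a subsequence of these lifts converges to a bi-infinite admissible leaf path $l'$ in $\mathscr{S}'$ with $z(l')=l$, regular by construction. This argument simultaneously supplies the consistency of the routing (the side of $x$ on which the points $Q^2(l_n)$ accumulate dictates the choice of $K'$ or $K''$) and rules out the forbidden turns: if $l$ turned at $x$ from $\dom(a_0)\subseteq \overline{d_x}$ into $\dom(a)$ with $\dom(a)\cap d_x=\emptyset$, then for large $n$ the point $Q^2(l_n)$ would lie in the intersection of the corresponding domains, which is $\{x\}$, contradicting distinctness. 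Your closing sentences contain the right idea; they should be the proof rather than an afterthought appended to an argument that does not close.
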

\begin{proof}
  There exists a sequence $l_n$ of bi-infinite admissible leaves in
  $\mathscr{S}$ converging to $l$ such that the $x_n=\CQ(l_n)$ are
  distinct and distinct from $x$. We can assume that for each $n$ the
  finite admissible leaf path $l_n|_{[-n,n]}$ does not cross $x$. From the definition of splitting, $l_n|_{[-n,n]}$ can be lifted to a finite
  admissible leaf path $\gamma_n$ in $\mathscr{S}'$. The paths
  $\gamma_n$ converges to a bi-infinite admissible leaf path $l'$ in
  $\mathscr{S}'$ which is a lift of $l$.
\end{proof}

We can now iteratively split our system of isometries dual to a tree
of surface type to get the analogue of Lemma~\ref{L.TauIsHE}. As there
might be more than one splitting point (there are at most finitely many), we
choose, as a convention, to split all splitting points simultaneously.

\begin{lem}\label{L.TauIsHE2}
  Let $T \in \barcvn$ be free with dense orbits, and suppose
  that $T$ is of surface type.  Let $S_0$ denote a system of
  isometries associated to $T$.  Denote by $S_i$ the output of
  splitting $S_{i-1}$, and let $\Gamma_i$ be the graph associated to $S_i$.
\begin{enumerate}
 \item [(i)] $\Gamma_i$ has no vertices of valence 0 or 1, and
 \item [(ii)] the maps $\tau_i :\Gamma_i \rightarrow \Gamma_{i-1}$ are
   homotopy equivalences.\qed
\end{enumerate}  
\end{lem}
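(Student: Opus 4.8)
The plan is to mirror the proof of Lemma~\ref{L.TauIsHE}, showing that a single splitting move changes the associated graph only by a homotopy equivalence, and then iterate. So the first thing I would do is fix notation as in Definition~\ref{def:splitting} and the subsequent construction: $S_{i-1}=(F,A)$ with splitting point $x$, splitting direction $d_x$, the special isometries $a_0,a_1$, and the split system $S_i=(F',A')$ obtained by cutting $K_x$ into $K'=d_x\cup\{x\}$ and $K''=K_x\ssm d_x$ and redistributing the partial isometries. It is harmless to treat the case of a single splitting point; the simultaneous case is the obvious disjoint combination. I would then describe the effect on the graph $\Gamma=\Gamma_{i-1}$ explicitly: the vertex $v$ corresponding to $K_x$ is blown up into two vertices $v'$ (for $K'$) and $v''$ (for $K''$); the edge of $a_0$ now has $v'$ as the endpoint incident to $\dom(a_0)$; the edge of $a_1$ is \emph{doubled} into an edge at $v'$ (for $a_1'$) and an edge at $v''$ (for $a_1''$); every other isometry with $x\in\dom(a)$ gets its corresponding endpoint moved to $v''$; isometries in $A_0$ are untouched. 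So $\Gamma_i$ has exactly one more vertex and one more edge than $\Gamma_{i-1}$.

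Next I would identify $\tau_i:\Gamma_i\to\Gamma_{i-1}$ concretely: it is the graph map induced by the ``zip-up'' $z:\mathscr S'\to\mathscr S$ of Section~\ref{S.Splitting}, i.e. it collapses the new edge $e$ (the core of the band $K'\times[0,1]$ identified, under $z$, with the leaf through $x$) onto the vertex $v$, and otherwise is the identity on vertices and edges. Collapsing a single non-loop edge of a graph is a homotopy equivalence provided the edge is not a loop — and here the edge $e$ runs between $v'$ and $v''$, which are distinct by construction, so $e$ is not a loop. Hence $\tau_i$ is a homotopy equivalence, giving part~(ii). Alternatively, and perhaps cleaner to write, I would simply invoke that $z:\mathscr S'\to\mathscr S$ is a homotopy equivalence (already asserted in the text) and that $\Gamma_i$, $\Gamma_{i-1}$ are deformation retracts of $\mathscr S'$, $\mathscr S$ respectively, with $\tau_i$ compatible with these retractions up to homotopy; then $\tau_i$ is a homotopy equivalence by the two-out-of-three property. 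Either way part~(ii) is essentially formal once the local picture is pinned down.

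For part~(i), the claim is that $\Gamma_i$ has no valence-$0$ or valence-$1$ vertices. Since $\Gamma_{i-1}$ has this property — by the Convention preceding the lemma, $S_0$ comes from $(K_B,B)$ by running the Rips machine to a halt, so $\Gamma_0$ is a rose (no valence $\le 1$ vertices), and inductively $\Gamma_{i-1}$ has none — it suffices to check that the blow-up does not create one. Every vertex other than $v'$, $v''$ keeps its incident edges, so retains valence $\ge 2$. The vertex $v''$ is incident to the edge for $a_1''$, to the edge $e$, and to edges for all other isometries formerly at $v$ with $x\in\dom$; in particular it meets $e$ and the $a_1''$-edge, so has valence $\ge 2$. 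The vertex $v'$ is incident to $e$, to the $a_0'$-edge, and to the $a_1'$-edge: that is the crux. By property~\ref{defSplit:2}, $a_0$ is genuinely defined at $x$ with $\dom(a_0)\ne\{x\}$, so its edge is present at $v'$; by~\ref{defSplit:3}, $a_1$ (distinct from $a_0$) has $\dom(a_1)$ meeting $d_x$, so its restriction $a_1'$ to $K'$ is a non-empty partial isometry and contributes an edge at $v'$; together with $e$ that is valence $\ge 3$, in fact. So no low-valence vertex is created, proving~(i).

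The main obstacle, and the only place real content beyond bookkeeping enters, is making sure that after splitting \emph{at all splitting points simultaneously} the procedure is still well-defined and the halt-state convention is maintained — i.e. that the new system $S_i$ is again of the form to which splitting applies (surface type, Rips machine halted). For the latter, I would note that splitting does not change the lamination $L(\mathscr S)=L(\mathscr S')$ (Lemma~\ref{lem:zippingLeaves} and the fact that $z$ is a homotopy equivalence preserving leaf-paths), hence does not change the dual tree $T$, which is still free, indecomposable, and of surface type; and one may re-run the Rips machine to a halt if needed, which by Lemma~\ref{L.NonSingRips} and Lemma~\ref{L.TauIsHE} again only alters the graph by a homotopy equivalence and keeps valence $\ge 2$. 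With that observed, (i) and (ii) follow by the straightforward induction sketched above.
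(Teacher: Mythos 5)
The paper states this lemma without proof, so the only check is against the construction itself, and there your combinatorial model of $\Gamma_i$ is wrong in a way that matters. By the definition in Section~\ref{SS.Systems}, the edges of $\Gamma_i$ are exactly the partial isometries of $A_i$; splitting replaces the vertex $v$ by $v'$ and $v''$ and replaces the edge of $a_1$ by two edges, one for $a_1'$ issuing from $v'$ and one for $a_1''$ issuing from $v''$ (both ending where the $a_1$-edge ended). There is \emph{no} new edge $e$ joining $v'$ to $v''$, and $\tau_i$ is not the collapse of such an edge: it identifies $v'$ with $v''$ and identifies the $a_1'$- and $a_1''$-edges with each other, i.e.\ it is a Stallings fold of two edges with distinct initial vertices and common terminal point --- exactly as the paper's own Remark~\ref{R.InductionFolds} says (``the graph morphisms $\tau_i$ only fold at illegal turns''). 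Part~(ii) survives this correction, either because such a fold is a homotopy equivalence, or via your fallback argument through $z:\mathscr S'\to\mathscr S$ and the deformation retractions onto the graphs; the Euler-characteristic count (one more vertex, one more edge) you give is consistent with this.

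The real casualty is your argument for part~(i) at $v''$: you certify valence $\geq 2$ there by listing ``the edge $e$'' among the incident edges, and the remaining edges you invoke (the $a''$ for $a\in A^{\pm1}\ssm\{a_0,a_1\}$ with $x\in\dom(a)$) are not obviously nonempty, so once $e$ is removed you are left with only the $a_1''$-edge. To close this you must use condition~\ref{defSplit:1} together with Proposition~\ref{prop:surfaceDirections}: since $x$ is not extremal in $K_x$ there is a direction $d'\neq d_x$ at $x$, which lies in $K''$, and exactly two elements of $A^{\pm1}$ are defined in $d'$; their restrictions give two bases contained in $K''$ (or one band with both bases in $K''$, i.e.\ a loop), hence two edge-ends at $v''$. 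Your argument at $v'$ is fine as it stands, since the $a_0'$- and $a_1'$-edges both genuinely issue from $v'$ by~\ref{defSplit:2} and~\ref{defSplit:3}. The closing worry about re-running the Rips machine after splitting is reasonable but peripheral; the lemma only concerns the graphs produced by the splitting moves themselves.
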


\subsection{Surface Type Actions}

We now establish analogues of Lemma~\ref{L.Lev} and
Proposition~\ref{P.LevMin} for actions of surface type.

\begin{lem}\label{L.Surface}
  Let $T \in \barcvn$ be free with dense orbits, and suppose
  that $T$ is of surface type.  If $L_0 \subsetneq L_r(T)$
  is a proper sublamination, then every leaf of $L_0$ is carried by a
  proper free factor of $\FN$.
\end{lem}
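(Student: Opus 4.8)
The plan is to mirror the proof of Lemma~\ref{L.Lev}, replacing the Rips machine by the splitting procedure of Section~\ref{S.Splitting} and Lemma~\ref{L.TauIsHE2} in the role previously played by Lemma~\ref{L.TauIsHE}. First I would fix a basis $B$ for $\FN$, take the associated system of isometries $S_0=(K_B,B)$ run until the Rips machine halts, and let $S_i=(F_i,A_i)$ denote the result of iterating the simultaneous splitting, with $\Gamma_i$ the associated graph. Since $T$ is of surface type and indecomposable, Proposition~\ref{prop:splittingPointExist} guarantees that a splitting point exists at every stage, so the sequence $(S_i)$ is well-defined and infinite; moreover, by Lemma~\ref{L.TauIsHE2} each $\Gamma_i$ has no valence-$0$ or valence-$1$ vertices and each $\tau_i:\Gamma_i\to\Gamma_{i-1}$ is a homotopy equivalence, so the number of vertices of valence strictly greater than two in $\Gamma_i$ stays bounded by $2N-2$ (as $\Gamma_0$ is a rose with $N$ petals). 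By Lemma~\ref{lem:zippingLeaves}, regular leaves persist: any $l\in L_r(\mathscr{S}_0)$ lifts through all the zipping maps to regular leaves in every $\mathscr{S}_i$, so leaves of $L_0$ (and the leaf we single out below) can be read off in every $\Gamma_i$.

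The core of the argument is then identical in spirit to Lemma~\ref{L.Lev}: pick a bi-infinite admissible leaf $l\in L_r(\mathscr{S}_0)\ssm L_0$, and a sequence $l_n\to l$ with $x_n=\CQ^2(l_n)$ distinct and distinct from $x=\CQ^2(l)$, arranged so that $l_n|_{[-n,n]}=l|_{[-n,n]}$ as paths in $\Gamma_0$. Since $l\notin L_0$, choose $M$ with $\{l'\in L_0\mid l'|_{[-M,M]}=l|_{[-M,M]}\}=\emptyset$. As the action is free, there is $j(M)$ so that for $i\geq j(M)$ every reduced loop in $\Gamma_i$ has length strictly greater than $2M$. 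The one genuinely new point is the analogue of "for any $m$ there is $i(m)$ such that the $x_n$, $n\leq m$, lie in separate components of $F_i$": in the Levitt case this used total disconnectedness of $\Omega$, which fails here since $F_i=\Omega_B$ is a finite union of compact trees. Instead, I would argue that since the $x_n$ are distinct points of the compact forest $\Omega_B$ and the leaves $l_n$ are regular, repeated splitting eventually separates the initial segments of the $l_n$ combinatorially in $\Gamma_i$ — this is exactly the surface-type analogue of the statement that the branches of $\Gamma_i$ approximate leaves of $L(T)'$ for $i\gg 0$. Concretely, for $m$ large and $i$ large (depending on $m$ and $M$), the finitely many bounded reduced paths $l_n|_{[-M,M]}$, $n\leq m$, are pairwise distinct in $\Gamma_i$ and, since there are at most $2N-2$ branch vertices and loops are long, at least one $l_n|_{[-M,M]}$ avoids every vertex of valence $>2$.

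To finish, I would observe that no leaf of $L_0$ can cross any edge in the image of that $l_n|_{[-M,M]}$ in $\Gamma_i$ (by the choice of $M$, since crossing such an edge would force agreement with $l|_{[-M,M]}$ on too long a segment, using that $l_n|_{[-M,M]}=l|_{[-M,M]}$). Hence every leaf of $L_0$ lies in the proper subgraph $G_0:=\Gamma_i\ssm \mathrm{Im}(l_n|_{[-M,M]})$; since $l_n|_{[-M,M]}$ is an embedded reduced arc missing the branch vertices, removing its interior edges strictly drops the first Betti number, so by Lemma~\ref{L.TauIsHE2} $G_0$ corresponds to a proper free factor $H\leq\FN$ carrying every leaf of $L_0$. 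The main obstacle I anticipate is making the separation step rigorous without total disconnectedness: one must show that, although the forests $F_i$ stabilize to $\Omega_B$, the graphs $\Gamma_i$ nevertheless refine enough that any finite collection of distinct regular leaves eventually have distinct bounded prefixes in $\Gamma_i$ — this should follow from the fact that $L(T)=L(\mathscr{S}_i)$ is an inverse limit of the $\Gamma_i$ together with Proposition~\ref{P.HeartProperties}(iii), which says distinct points of $\Omega_B$ correspond to distinct leaves, but it requires some care to phrase at the level of combinatorial paths.
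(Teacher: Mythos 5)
Your overall strategy is the paper's: run the Rips machine until it halts, iterate the splitting procedure, and rerun the argument of Lemma~\ref{L.Lev} using Lemma~\ref{lem:zippingLeaves} and Lemma~\ref{L.TauIsHE2}. But the step you yourself flag as ``the main obstacle'' --- showing that finitely many distinct points $x_n\in\Omega$ eventually lie in distinct components of $F_i$ --- is precisely the new content needed in the surface case, and your proposal does not supply it. Appealing to ``$L(T)$ is an inverse limit of the $\Gamma_i$'' together with Proposition~\ref{P.HeartProperties}(iii) does not close the gap: a priori the splitting could keep cutting the forest only in one region and never separate two given points $y\neq y'$, and nothing in the inverse-limit description rules this out. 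The paper closes the gap with a dynamical argument: by Lemma~\ref{L.MixArcDense} the action has arc-dense directions, so by Proposition~\ref{P.HeartProperties} the pseudo-orbit of the splitting direction $d_x$ meets every non-degenerate arc $[y,y']\subseteq F$; that is, there is a finite admissible path $\gamma$ in $\Gamma_0$ with $x.\gamma\in[y,y']$ and $d_x.\gamma$ meeting $[y,y']$, and after $|\gamma|$ further splitting steps the cut made at $x$ has propagated along $\gamma$ to the point $x.\gamma$, so the images of $y$ and $y'$ lie in different components of $F_i$ for all $i\geq|\gamma|$. Without this (or an equivalent) argument, your separation claim is an assertion, not a proof.

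Two smaller points. First, the lemma does not assume $T$ indecomposable, so you cannot invoke Proposition~\ref{prop:splittingPointExist} unconditionally; the paper instead records the dichotomy that either splitting points exist at each stage or, from the proof of that proposition, every leaf of $L_r(T)$ is already carried by a proper free factor, in which case there is nothing left to prove. Second, under splitting the forests do not ``stabilize to $\Omega_B$'': their underlying point set is essentially constant but their component structure keeps refining, and it is exactly this refinement that the arc-density argument quantifies. The remainder of your proof (the counting of distinct length-$2M$ paths against the $2N-2$ branch vertices and long loops, and the passage from the complementary subgraph $G_0$ to a proper free factor) matches the paper and is fine once the separation step is in place.
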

\begin{proof}
  Let $S=(K,A)$ be a system of isometries associated to $T$. By
  definition of surface type after finitely many steps the Rips
  machine starting on $S$ halts on a surface type system of isometries
  $S_0$.  According to Proposition~\ref{prop:splittingPointExist} and its proof and
  Corollary~\ref{L.TauIsHE2}, either we can then perform splittings on
  $S_0$ or every leaf of $L_r(T)$ is carried by a proper free factor of $\FN$. Let $S_i=(F_i, A_i)$ denote the result of the $i^{th}$
  iteration of splitting applied to $S_0$. 

  By Lemma~\ref{L.MixArcDense}, directions are arc dense in $T$ and by
  Proposition~\ref{P.HeartProperties}, directions are arc dense in $F_i$
  under the action of the pseudogroup $S_i$.  In particular if $d_x$ is
  the first splitting direction, for any non-degenerate arc $[y,y']
  \subseteq F$, there is a finite admissible path $\gamma$ in the
  graph $\Gamma_0$ associated to $S_0$ such that $x.\gamma_0\in
  [y,y']$ and $d_x.\gamma_0$ meets $[y,y']$.  For all $i \geq
  i(y,y')=|\gamma|$, the images of $y$ and $y'$ in $F_i$ lie in
  different components of $F_i$.

  Using Lemma~\ref{lem:zippingLeaves} and Propostion~\ref{L.TauIsHE2},
  we may now conclude exactly as in the proof of Lemma~\ref{L.Lev}.
\end{proof}

\noindent We have:

\begin{prop}\label{P.SurfaceMin}
  Let $T \in \barcvn$, and assume that $T$ is free,
  indecomposable and of surface type.  The regular sublamination
  $L_r(T)\subseteq L(T)$ is minimal.
\end{prop}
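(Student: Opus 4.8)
The plan is to mirror exactly the argument used in Proposition~\ref{P.LevMin} for the Levitt case, now that the surface-type analogue of the key lemma is in place. First I would invoke Lemma~\ref{L.Surface}: if the regular sublamination $L_r(T)$ were not minimal, it would contain a proper sublamination $L_0 \subsetneq L_r(T)$, and then every leaf of $L_0$ would be carried by a proper free factor $H$ of $\FN$. Next I would derive a contradiction from the indecomposability hypothesis via Proposition~\ref{P.Indecomp}: since $T$ is free and indecomposable, a finitely generated subgroup $H$ carries a leaf of $L(T)$ if and only if $H$ has finite index in $\FN$; but a proper free factor has infinite index, so it cannot carry any leaf of $L(T)$, hence certainly none of $L_0 \subseteq L_r(T) \subseteq L(T)$. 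This contradicts the nonemptiness of $L_0$.

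The only point requiring a little care is that $L_0$ is genuinely a nonempty lamination, so that "every leaf of $L_0$ is carried by a proper free factor" actually produces a leaf to feed into Proposition~\ref{P.Indecomp}; this is immediate since by definition a sublamination is closed, $\FN$-invariant, $i$-invariant and nonempty. One should also note that $L_r(T)$ is itself nonempty under the standing hypotheses — for an indecomposable (hence mixing, non-simplicial) free tree with dense orbits the dual lamination has no isolated leaves in abundance, so $L_r(T) = L(T)'$ by Lemma~\ref{C.LdBasisInd} is a bona fide lamination — but for the statement "minimal" we only need: if a proper sublamination existed we reach a contradiction, which is exactly the structure above.

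There is no real obstacle here: the heavy lifting has already been done in Lemma~\ref{L.Surface} (which in turn rests on the existence of splitting points, Proposition~\ref{prop:splittingPointExist}, and the homotopy-equivalence bookkeeping of Lemma~\ref{L.TauIsHE2} together with the leaf-lifting Lemma~\ref{lem:zippingLeaves}). So the proof is a two-line deduction. Concretely:

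\begin{proof}
  By Lemma~\ref{L.Surface}, if there were a proper sublamination $L_0 \subsetneq L_r(T)$, then every leaf of $L_0$ would be carried by a proper free factor $H$ of $\FN$. Since a proper free factor has infinite index in $\FN$, Proposition~\ref{P.Indecomp} shows that $H$ carries no leaf of $L(T)$, hence no leaf of $L_0 \subseteq L_r(T) \subseteq L(T)$; as $L_0$ is non-empty this is a contradiction. Therefore $L_r(T)$ has no proper sublamination, i.e.\ it is minimal.
\end{proof}
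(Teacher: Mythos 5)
Your proof is correct and is essentially identical to the paper's own argument: both deduce from Lemma~\ref{L.Surface} that a proper sublamination $L_0 \subsetneq L_r(T)$ would have every leaf carried by a proper free factor, contradicting Proposition~\ref{P.Indecomp}. The extra remarks on nonemptiness are fine but not needed beyond what the paper already implicitly uses.
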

\begin{proof}
  According to Lemma~\ref{L.Surface}, if there happened to be a proper
  sublamination $L_0 \subsetneq L_r(T)$, then every leaf of $L_0$ would
  be carried by a proper free factor $H\leq \FN$.  This is
  impossible by Proposition~\ref{P.Indecomp}.
\end{proof}

\section{Diagonal Leaves}

Recall from the introduction that a lamination $L$ is minimal up to
diagonal leaves if $L$ contains a unique minimal sublamination $L_0$, such
that $L \ssm L_0$ consists of finitely many $\FN$-orbits of leaves
that are diagonal over $L_0$.

In this section we use both the Rips machine and the splitting
induction to reach our main result.

\subsection{Decomposable trees}

\begin{lem}\label{L.NearlyMin}
  Let $T\in\barcvn$ be free with dense orbits.  If $L(T)$ is
  minimal up to diagonal leaves, then $T$ is indecomposable.
\end{lem}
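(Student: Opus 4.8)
The plan is to prove the contrapositive: assuming $T$ is \emph{not} indecomposable, I will show that $L(T)$ fails to be minimal up to diagonal leaves. First I would invoke Proposition~\ref{prop:transverse}: since $T$ is free with dense orbits and not indecomposable, there is a non-degenerate subtree $T_0\subseteq T$ whose $\FN$-translates form a transverse family, with $H:=\Stab(T_0)$ a free factor of $\FN$ acting on $T_0$ with dense orbits. Minimality of $T$ forces $H\neq\FN$, so $H$ is a \emph{proper} free factor of infinite index, and $\Rank(H)\geq 2$ (a copy of $\mathbb{Z}$ has no dense orbits on a non-degenerate tree). The $H$-tree $T_0$ is free with dense orbits, so by Theorem~\ref{T.QIndex} applied to $H\acts T_0$ its dual lamination $L(T_0)\subseteq\partial^2 H$ is non-empty. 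Moreover, for $X\in\partial H$ any sequence $u_n\in H$ with $u_n\to X$ computes both $\CQ_{T_0}(X)$ and $\CQ_T(X)$ (using $T_0\subseteq\overline T$), so the two maps agree on $\partial H$ whenever the value lies in $\overline{T_0}$; hence every leaf of $L(T_0)$ is a leaf of $L(T)$ carried by $H$, and $\CQ^2(L(T_0))\subseteq\overline{T_0}$.

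Now suppose, for contradiction, that $L(T)$ is minimal up to diagonal leaves, with unique minimal sublamination $L_0$ and $L(T)\ssm L_0$ a finite union of $\FN$-orbits of leaves diagonal over $L_0$. Choose a minimal sublamination $\Lambda\subseteq L(T_0)$ (as an $H$-lamination; such exists by Zorn), so that the $H$-orbit of every leaf of $\Lambda$, together with its flip, is dense in $\Lambda$. The key structural inputs are: (a) a proper free factor $H$ of $\FN$ is malnormal, so $\partial(gHg\inv)\cap\partial(g'Hg'\inv)=\emptyset$ whenever $gH\neq g'H$; and (b) $g_n\ell_n$ tends to the diagonal of $(\partial\FN)^2$ whenever $\ell_n\in\partial^2 H$ and $g_nH\to\infty$ in $\FN/H$ --- writing $g_n=u_nv_n$ with $v_n\in H$ the maximal $H$-suffix, the words $g_nX_n$ and $g_nY_n$ share the initial segment $u_n$, whose length goes to infinity. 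From (b), any convergent sequence of leaves in $\bigcup_g g\Lambda$ keeping away from the diagonal must come from finitely many cosets $gH$, and then, since $\Lambda$ is $H$-invariant and compact, its limit lies in some $g\Lambda$; thus $\bigcup_g g\Lambda$ is closed in $\partial^2\FN$, and combining this with density of $H$-orbits in $\Lambda$ one sees $\bigcup_g g\Lambda$ is a minimal $\FN$-sublamination of $L(T)$. By uniqueness, $L_0=\bigsqcup_{gH\in\FN/H}g\Lambda$ (the union being disjoint by (a)); in particular every leaf of $L_0$ is carried by exactly one conjugate of $H$.

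Next I would push this through the diagonal leaves. If $(X_1,X_m)$ is diagonal over $L_0$ with witnesses $(X_i,X_{i+1})\in L_0$ carried by $g_iHg_i\inv$, then the shared endpoint $X_{i+1}\in\partial(g_iHg_i\inv)\cap\partial(g_{i+1}Hg_{i+1}\inv)$, so by (a) all the $g_iHg_i\inv$ coincide and $(X_1,X_m)$ is carried by that single conjugate. Therefore \emph{every} leaf of $L(T)$ is carried by a conjugate of $H$, whence $\Omega:=\CQ^2(L(T))\subseteq\bigcup_{g\in\FN}\overline{gT_0}$. The contradiction now comes from the geometry of $T$: $\Omega$ is non-empty (there are leaves) and $\FN$-invariant, and every $\FN$-orbit in $T$ is dense, so $\Omega$ is dense in $\overline T$; hence the union $\bigcup_g gT_0$ of the transverse family is dense in $T$. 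But collapsing each $gT_0$ to a point yields an $\FN$-equivariant, surjective map to a minimal $\FN$-tree in which the image of the dense set $\bigcup_g gT_0$ is the (discrete) orbit of a single point; pushing this to a contradiction shows that a transverse family of \emph{proper} subtrees cannot have dense union in a minimal free dense-orbit tree, and hence forces $T_0=T$, against $T_0$ being proper. This contradiction shows $L(T)$ is not minimal up to diagonal leaves.

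I expect the final step to be the main obstacle: converting ``$\Omega\subseteq\bigcup_g\overline{gT_0}$ with $\Omega$ dense'' into an honest contradiction requires the precise fact that a transverse family of proper subtrees in a minimal free tree with dense orbits cannot cover a dense set --- equivalently, that the associated collapse map has non-degenerate target, so that $\overline{\bigcup_g gT_0}\subsetneq T$. Some additional care is needed in the bookkeeping for flip-invariance when choosing $\Lambda$, in the escaping-to-the-diagonal estimate (b), and in citing non-emptiness of $L(T_0)$, which uses $\Rank(H)\geq 2$, freeness, and density of the $H$-orbits via Theorem~\ref{T.QIndex}.
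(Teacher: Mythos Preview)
Your reduction through the point where, under the contradiction hypothesis, every leaf of $L(T)$ is carried by a single conjugate of $H$ is correct, and this is exactly the principle the paper exploits (phrased contrapositively): both arguments rest on malnormality of the free factor $H$ and the resulting disjointness of the translates $g\partial^2 H$. The gap is precisely where you flag it, and it cannot be closed along the lines you propose. The assertion you need --- that a transverse family $\{gT_0\}$ of proper subtrees in a free minimal dense-orbits tree cannot have dense union --- is false: the family produced by Proposition~\ref{prop:transverse} can perfectly well be a transverse \emph{covering}, with $\bigcup_g gT_0=T$. For instance, take $\FN=H*\langle c\rangle$ with $H$ free of rank $\geq 2$ acting freely with dense orbits on $T_0$, and build $T$ as a graph of actions over the one-vertex HNN Bass--Serre tree with edge length zero and generic attaching points in $T_0$; then the translates $gT_0$ tile all of $T$, meeting only in points, and your collapse map sends $T$ to a single point. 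So no contradiction is available from $\Omega\subseteq\bigcup_g\overline{gT_0}$.

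The paper finishes instead by directly producing a leaf of $L(T)$ with one end in $\partial H$ and the other outside it. If some $g\notin H$ satisfies $g\overline{T_0}\cap\overline{T_0}\neq\emptyset$ (as always happens in the covering situation above), surjectivity of $\CQ:\partial H\to\widehat T_0$ from Proposition~\ref{prop:Qonto} gives $X,Y\in\partial H$ with $\CQ(X)=g\CQ(Y)$, and $(X,gY)$ is such a leaf. Otherwise, density of $\FN$-orbits yields $g'_n\notin H$, normalized to begin with a letter outside a fixed basis of $H$, with $d(T_0,g'_nT_0)\to 0$; compactness in the observers' topology then extracts $Y\in\partial\FN\ssm\partial H$ with $\CQ(Y)\in\widehat T_0$, and any $X\in\partial H$ with $\CQ(X)=\CQ(Y)$ gives the leaf $(X,Y)$. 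By your own malnormality chain argument such a leaf cannot be diagonal over $\FN\cdot L(T_0)$, so this construction can replace your steps~5--7 and complete the proof.
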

\begin{proof}
  We argue the contrapositive.  Let $T\in\barcvn$ have dense orbits,
  and suppose that $T$ is not indecomposable.  Following
  Proposition~\ref{prop:transverse}, there exists a non-degenerate transverse family
  $\{gT_0\ |\ g\in\FN\}$, where $T_0$ is a closed non-degenerate
  subtree of $T$. The stabilizer $H=\Stab(T_0)$ is a proper free
  factor of $\FN$ and acts on $T_0$ with dense orbits.  The dual
  lamination $L_H(T_0)\subseteq\partial^2H$ of $T_0$ is non-empty, and
  $L_H(T_0)$ is diagonally closed. Recall that $H$ is quasi-convex in
  $\FN$ and thus there is an embedding
  $\partial^2H\subseteq\partial^2\FN$. Moreover, as $H$ is a free
  factor we have:
  \[
  \forall g\in\FN, g\partial H\cap\partial H\neq\emptyset\iff g\in H.
  \]
  The lamination generated by $L(T_0)$, $L_0=\FN.L(T_0)$, is a
  sublamination of $L(T)$ closed by diagonal leaves. 

  Assume first that there exists $g\in\FN\ssm H$ such that $g\bar
  T_0\cap \bar T_0\neq\emptyset$, and let $x,y\in\bar T_0$ such that
  $x=gy$. By Proposition~\ref{prop:Qonto} the map $\CQ:\partial
  H\to\That_0$ is onto, thus there exist, $X,Y\in\partial H$ such that
  $\CQ(X)=x$ and $\CQ(Y)=y$. By definition of the dual lamination,
  $(X,gY)$ is a leaf of $L(T)$; by construction $(X,gY)$ is not diagonal over $L_0$.

  We now assume, that $\forall g\in\FN\ssm H$, $g\bar T_0\cap
  T_0=\emptyset$. The action of $\FN$ on $T$ has dense orbits, thus
  there exists a sequence $g_n\in \FN\ssm H$ such that
  $d(T_0,g_nT_0)<\frac 1n$. Fixing a basis $B=\{a_1,\ldots a_r\}$ of
  $H$ which is completed to a basis $A=\{a_1,\ldots, a_N\}$ of $\FN$,
  we can write $g_n=h_n\cdot g'_n$ in reduced form with $h_n\in H$ and
  $g'_n$ starting with a letter in $A\ssm B$. Of course
  $d(T_0,g'_nT_0)<\frac 1n$. The action of $H$ on $T_0$ has dense
  orbits thus for any point $y\in T_0$ there exist $h'_n\in H$ such that
  $d(T_0,g'_nh'_ny)<\frac 1n$. By our assumption the sequence
  $|g'_nh'_n|$ goes to infinity and there is a subsequence converging to
  $Y\in \partial\FN$. The first letter of $Y$ written as an infinite
  reduced word is in $A\ssm B$ thus $Y\not\in\partial H$. Now we use
  the weaker observers' topology so that $\Thatobs$ is compact and, we
  extract again a subsequence to have $g'_nh'_ny$ converging to a
  point $x\in\Thatobs_0$. We get that $\CQ(Y)=x$, but as $x\in\hat
  T_0$ there exists $X\in\partial H$ such that $\CQ(X)=x$. By
  definition of the dual lamination, $(X,Y)$ is a leaf in $L(T)$; by construction $(X,Y)$
  is not diagonal over $L_0$.
\end{proof}

Considering Propositions~\ref{P.LevMin} and \ref{P.SurfaceMin} and
Lemma \ref{L.NearlyMin}, to establish Theorem~A, we need
understand diagonal leaves in $L(T)$ for $T$ free and indecomposable.

\subsection{Train Tracks and the Main Result}\label{SS.TrainTracks}

Let $T$ be a free, indecomposable tree in $\barcvn$.  Let $A$ be a
basis for $\FN$, and let $S=(K_A, A)$ be the associated system of
isometries.  By Proposition~\ref{P.MixType}, $T$ is either suface or
Levitt type.  If $T$ is surface, we run the Rips Machine on $S$ until it
halts.  In either case, we get a system of isometries $S_0=(F_0,A_0)$
(which is equal to $S$ if $T$ is Levitt type), and we denote by
$S_i=(F_i, A_i)$ the result of running either the Rips machine or
splitting on $S_0$ for $i$ steps.  There are homotopy equivalences
$S_i \rightarrow \Gamma_i$ and $\tau_i:\Gamma_i \rightarrow
\Gamma_{i-1}$. 

A \textbf{turn} in $\Gamma_i$ is a pair $\{e,e'\}$ of directed edges
with the same initial vertex.  We give the graph $\Gamma_i$ a
\textbf{train track structure} by declaring a turn \textbf{legal} if
it is crossed by a regular leaf, \emph{i.e.} a regular leaf contains the subpath $\overline{e}e'$.  Train track structures on graphs
were introduced in \cite{BH92}.

\begin{rem}\label{R.InductionFolds}
  From Propostions~\ref{L.TauIsHE} and \ref{lem:zippingLeaves}, our
  inductive procedure (either the Rips machine or splitting) applied
  to $\mathscr{S}_i$ has the effect of ``splitting'' an illegal turn
  in $\Gamma_i$: in other words, the graph morphisms $\tau_i$ only
  fold at illegal turns.
\end{rem}

For a vertex $v$ of $\Gamma_i$, let $Leg(v)$ denote the set of legal
turns in $\Gamma_i$ at $v$, and let $I(v)$ denote the set of edges of
$\Gamma_i$ with initial vertex $v$.  Following \cite{BH92} again, we
define the \textbf{Whitehead graph}, $\wh(v,\Gamma_i)$, associated to
the vertex $v$ of $\Gamma_i$. The vertex set of $\wh(v,\Gamma_i)$ is
$I(v)$ and there is an edge from $e$ to $e'$ if the turn $\{e,e'\}$ is
legal.

\begin{lem}\label{L.Whitehead}
  For every $v \in V(\Gamma_i)$, the Whitehead graph $\wh(v,\Gamma_i)$
  is connected.
\end{lem}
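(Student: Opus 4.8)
The plan is to prove the contrapositive locally: if $\wh(v,\Gamma_i)$ is disconnected for some $i$ and some $v\in V(\Gamma_i)$, I will produce a regular leaf of $L(T)$ carried by a proper free factor of $\FN$, contradicting Proposition~\ref{P.Indecomp}. The mechanism is a \emph{Whitehead move} on $\Gamma_i$ driven by the disconnection, so the first step is to set that up.

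Concretely, suppose $\wh(v,\Gamma_i)$ is disconnected and write $I(v)=E_1\sqcup E_2$ with $E_1,E_2\neq\emptyset$ and no legal turn at $v$ having one half-edge in $E_1$ and the other in $E_2$; this also covers the case where $\wh(v,\Gamma_i)$ has an isolated vertex (put it alone in $E_1$), since an isolated vertex lies in no legal turn. I would form $\Gamma'$ from $\Gamma_i$ by splitting $v$ into two vertices $v_1,v_2$, reattaching the half-edges of $E_j$ at $v_j$, and inserting a new edge $\epsilon$ from $v_1$ to $v_2$. Collapsing $\epsilon$ gives a homotopy equivalence $z\colon\Gamma'\to\Gamma_i$; composing with $\tau_i\circ\cdots\circ\tau_1$ (Lemmas~\ref{L.TauIsHE} and \ref{L.TauIsHE2}) and the marking of $\Gamma_0$ identifies $\pi_1(\Gamma')$ with $\FN$.

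Next I would check that every regular leaf is carried by the fundamental group of a single component $C$ of $\Gamma'\ssm\epsilon$, and that $\pi_1(C)$ is a \emph{proper} free factor of $\FN$. For the first point: a regular leaf $l$, read as a bi-infinite reduced edge-path in $\Gamma_i$, crosses only legal turns (that is the definition of legal), so at every passage through $v$ it uses a turn with both half-edges in the same $E_j$; hence $l$ lifts through $z$ to a reduced path in $\Gamma'$ that never traverses $\epsilon$, i.e.\ a path contained in one component $C$ of $\Gamma'\ssm\epsilon$. For the second point: every vertex of $\Gamma'\ssm\epsilon$ other than $v_1,v_2$ is a vertex of $\Gamma_i$ with the same incident edges, hence has valence $\geq 2$ by Lemmas~\ref{L.TauIsHE}(i) and \ref{L.TauIsHE2}(i); a short rank count then gives $\Rank(\pi_1(C))\leq N-1$. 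Indeed, if $\Gamma'\ssm\epsilon$ is connected its rank is $N-1$; if it has two components their ranks sum to $N$, and neither component can be a tree, since a tree component would contain a valence-$1$ vertex distinct from $v_1,v_2$, hence a valence-$1$ vertex of $\Gamma_i$.

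Finally, by Propositions~\ref{P.LevMin} and \ref{P.SurfaceMin} the regular sublamination $L_r(T)=L(T)'$ is minimal, in particular non-empty; I pick $l\in L_r(T)\cap C_A$ (non-empty because $\partial^2\FN=\bigcup_g gC_A$, translating by an element of $\FN$ if necessary), and using $L(\mathscr{S}_i)=L(T)\cap C_A$ from Proposition~\ref{P.HeartProperties}(ii) together with the fact that $C_A$ is clopen, I get that $l\in L_r(\mathscr{S}_i)$ is carried by a proper free factor of $\FN$. Since $T$ is free and indecomposable, Proposition~\ref{P.Indecomp} forces any free factor carrying a leaf of $L(T)$ to have finite index, which is impossible for a proper free factor; hence $\wh(v,\Gamma_i)$ is connected. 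I expect the fussiest step to be the rank bookkeeping in the previous paragraph — specifically, ruling out that $l$ is carried by a full-rank (non-proper) piece when $\epsilon$ turns out to be a separating edge of $\Gamma'$; the absence of valence-$\leq 1$ vertices in the graphs $\Gamma_i$ is exactly what prevents this.
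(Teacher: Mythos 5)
Your proof is correct and follows essentially the same route as the paper: the paper's proof is a one-line reduction to the argument of \cite[Proposition 4.5]{BH92} (a disconnected Whitehead graph yields, via a blow-up at $v$, a proper free factor carrying every regular leaf, contradicting Proposition~\ref{P.Indecomp}), and you have simply written out the details of that blow-up and the rank count. No substantive difference.
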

\begin{proof}
  Toward contradiction suppose that there is $i$ and $v \in
  V(\Gamma_i)$ such that $\wh(v,\Gamma_i)$ is not connected.
  Following the proof of \cite[Proposition 4.5]{BH92} this proves that
  every regular leaf of $L(T)$ is carried by a proper free factor of
  $\FN$, contradicting Proposition~\ref{P.Indecomp}.
\end{proof}

\begin{lem}\label{L.2EndDer}
  For any $x$ in the limit set $\Omega$, there exists a regular leaf
  $l\in L(T)$ such that $\CQ^2(l)=x$. In particular, if $l\in L(T)$ is
  such that $(\CQ^2)\inv(\CQ^2(l))=\{l\}$ then $l$ is regular.
\end{lem}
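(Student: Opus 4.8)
The plan is to realize $x$ as the image under $\CQ^2$ of a carefully chosen limit of leaves, and to exploit the train-track structure on the graphs $\Gamma_i$ together with the connectedness of the Whitehead graphs (Lemma~\ref{L.Whitehead}) to guarantee that the limiting leaf can be taken to be a genuine leaf realizing distinct $\CQ^2$-values along an approximating sequence. First I would recall that since $x \in \Omega = \CQ^2(L(T))$, there is at least one leaf $l_0 \in L(T)$ with $\CQ^2(l_0)=x$; working with the system of isometries $S_0=(F_0,A_0)$ and passing (via the Rips machine if $T$ is surface type) to the inductive sequence $S_i=(F_i,A_i)$ with associated graphs $\Gamma_i$, the leaf $l_0$ is a bi-infinite admissible leaf path in each $\mathscr{S}_i$ by Lemmas~\ref{L.NonSingRips} and \ref{lem:zippingLeaves}. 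If $l_0$ is already regular we are done, so assume it is not.

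The key step is to produce, for each $i$, a leaf $l_i \in L(T)$ with $\CQ^2(l_i)$ in a small neighborhood of $x$ and with $\CQ^2(l_i) \neq x$, by modifying $l_0$ at a single turn. Because $x$ lies in $\Omega = \cap_i F_i$ (or the limit set in the Levitt case), the point $x$ survives in every $F_i$; consider a long subpath $l_0|_{[-n,n]}$ of $l_0$ viewed in $\Gamma_i$ and a vertex $v$ it crosses corresponding to (the component of $F_i$ containing) $x$. Since $\wh(v,\Gamma_i)$ is connected by Lemma~\ref{L.Whitehead}, the illegal turn taken by $l_0$ at $v$ is connected through a chain of legal turns to the edges on either side, so one can find a leaf that agrees with $l_0$ on $l_0|_{[-n,n]}$ away from $v$ but takes a \emph{different} (legal) turn at an appropriate occurrence of $v$, hence maps under $\CQ^2$ to a point of $K_A$ distinct from $x$ but within distance controlled by the diameter of the relevant band, which shrinks as $i\to\infty$ by the contraction properties in Lemma~\ref{L.TauIsHE} (the reduced loop sizes in $\Gamma_i$ grow, so diameters of the pieces cut out shrink). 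Letting $n,i\to\infty$ simultaneously and using continuity of $\CQ^2$ (Proposition~\ref{P.Q2Cont}) together with compactness of $C_A$, the leaves $l_i$ (suitably translated into $C_A$) converge to a leaf $l \in L(T)$ with $\CQ^2(l)=x$; and by construction the $\CQ^2(l_i)$ are distinct from $x$ and can be arranged to be distinct from one another, so $l$ is regular.

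For the second statement, suppose $(\CQ^2)\inv(\CQ^2(l))=\{l\}$ and set $x=\CQ^2(l)$. By the first part there is a regular leaf $l'$ with $\CQ^2(l')=x$; but then $l' \in (\CQ^2)\inv(x)=\{l\}$, so $l'=l$ and $l$ is regular.

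\medskip

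\noindent\textbf{Main obstacle.} The delicate point is the simultaneous limiting argument: I must choose the modified leaves $l_i$ so that their $\CQ^2$-images both approach $x$ (requiring control on band diameters in $\Gamma_i$, which comes from freeness of the action and Lemma~\ref{L.TauIsHE}/\ref{L.TauIsHE2}) and stay distinct from $x$ (requiring that the alternative legal turn genuinely leads to a different point of $\bar T$, not merely a different combinatorial path — here freeness and Proposition~\ref{P.HeartProperties}(iii), identifying $\dom(X)=\{\CQ(X)\}$, are what rule out a collapse). Verifying that the connectedness of $\wh(v,\Gamma_i)$ really lets one reroute while keeping admissibility of the bi-infinite path is the technical heart, and is where I expect the argument in the paper to spend its effort.
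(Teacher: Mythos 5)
Your overall strategy --- start from a given leaf $l_0$ through $x$, reroute it at a turn using connectedness of the Whitehead graph so as to manufacture leaves with $\CQ^2$-images distinct from $x$, then pass to a limit --- is not the paper's argument, and the two steps you yourself flag as delicate are genuine gaps rather than technical chores. First, legality of a turn $\{e,e'\}$ at $v$ only means that \emph{some} regular leaf crosses $\bar{e}e'$; it does not follow that you can splice that turn onto the ray of $l_0$ arriving at $v$ and still obtain an admissible bi-infinite leaf path, because admissibility requires the domains of the two halves to intersect inside $v$, and the domain of the witnessing regular leaf may well be disjoint from $\dom(l_0^-)$. Nothing in Lemma~\ref{L.Whitehead} provides this compatibility. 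Second, even granting the splice, your claim that the modified leaf has $\CQ^2$-image \emph{distinct} from $x$ is unsupported: distinct leaves can share a $\CQ^2$-image --- that is exactly what happens at singular points and is the situation this lemma is designed to address --- and Proposition~\ref{P.HeartProperties}(iii) only identifies $\dom(X)$ with the singleton $\{\CQ(X)\}$ for a half-leaf; it does not separate the images of two different leaves through the same point.

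The paper's proof sidesteps both issues by not starting from $l_0$ at all. For each $i$ it takes the component $v_i$ of $F_i$ containing $x$; Lemma~\ref{L.Whitehead} (really just non-emptiness of the edge set of $\wh(v_i,\Gamma_i)$, given that valences are at least $2$ by Lemmas~\ref{L.TauIsHE} and~\ref{L.TauIsHE2}) yields a \emph{regular} bi-infinite admissible leaf path $l_i$ through $v_i$, since legal turns are by definition crossed by regular leaves. A subsequential limit $l$ of the $l_i$ is again regular because $L_r(T)=L(T)'$ (Lemma~\ref{C.LdBasisInd}) is a derived set and hence closed; and $\CQ^2(l)=x$ follows from continuity of $\CQ^2$ together with the fact that $\CQ^2(l_i)\in v_i$ and the components $v_i$ eventually separate points (the shrinking argument of Lemmas~\ref{L.Lev} and~\ref{L.Surface}). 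In particular there is no need to produce approximating leaves with pairwise distinct images near $x$, which is precisely the part of your argument that cannot be carried out as written. Your deduction of the second sentence of the lemma from the first is fine.
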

\begin{proof}
  We first translate $x$ in the compact limit set $\Omega_A$. For each
  $i$ let $v_i$ be the connected component of $F_i$ containing $x$. By
  Lemma~\ref{L.Whitehead}, there is a bi-infinite regular admissible
  leaf-path $l_i$ passing through $v_i$. Up to passing to a
  subsequence, $l_i$ converge to a bi-infinite regular admissible leaf path
  $l$. By the continuity of $\CQ^2$ and arguing as in the proof of
  Lemma~\ref{L.Surface} and \ref{L.Lev} we get that $\CQ^2(l)=x$.
\end{proof}

\begin{prop}\label{P.Diag}
  Let $T \in \barcvn$ be free and indecomposable.  Every leaf in $L(T)
  \ssm L_r(T)$ is diagonal over $L_r(T)$, and there are finitely
  many $\FN$-orbits of such leaves.
\end{prop}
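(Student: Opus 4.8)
The plan is to leverage the train track structure on the graphs $\Gamma_i$ together with the finiteness of $Q$-fibers. First I would use Theorem~5.3 of \cite{CH10}: since $T$ is free with dense orbits, every fiber $(\CQ^2)^{-1}(x)$ is finite of uniformly bounded cardinality, and by Lemma~\ref{C.LdBasisInd} the regular sublamination equals the derived lamination $L(T)'$, so $L(T)\ssm L_r(T)$ consists precisely of the isolated leaves. The key point is that an isolated leaf $l$ must share its image point $x=\CQ^2(l)$ with some regular leaf: indeed Lemma~\ref{L.2EndDer} furnishes a regular leaf $l_0$ with $\CQ^2(l_0)=x$, and since $l$ is isolated, $l\neq l_0$. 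So every isolated leaf $l=(X,Y)$ satisfies $\CQ(X)=\CQ(Y)=x$ where $x$ supports at least two leaves; in fact the finite set $(\CQ^2)^{-1}(x)$, as a subset of $\partial^2\FN$ whose endpoints all map to $x$ under $\CQ$, generates a finite complete graph on the vertex set $\CQ^{-1}(x)\subseteq\partial\FN$, and the non-regular leaves over $x$ are exactly the ``chords'' of this graph that are not edges of a spanning structure carried by regular leaves --- this is what ``diagonal over $L_r(T)$'' should mean.

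The heart of the argument is to show: if $x\in\Omega$ has $|\CQ^{-1}(x)|=k$, then the $k$ boundary points $X_1,\dots,X_k\in\CQ^{-1}(x)$ can be cyclically ordered so that the consecutive pairs $(X_j,X_{j+1})$ are all regular leaves. For this I would pass to the graphs $\Gamma_i$: the $k$ ends of the tree $\Gamma_x$ from the proof of Lemma~\ref{L.BoundQFiber} (or equivalently $k$ germs of admissible leaf-paths through the component $v_i$ of $F_i$ containing $x$) correspond to $k$ directed edges at $v_i$, and by Lemma~\ref{L.Whitehead} the Whitehead graph $\wh(v_i,\Gamma_i)$ is connected. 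Running the induction (Rips machine or splitting, both of which only fold at illegal turns by Remark~\ref{R.InductionFolds}), the legal turns at $v_i$ organize these germs; using Proposition~\ref{prop:surfaceDirections} in the surface case and the structure of the Rips machine in the Levitt case, I would argue that for $i$ large the legal turns at the vertex carrying $x$ form a single cycle through all germs over $x$, which gives the cyclic order and exhibits the $k$ consecutive pairs as regular leaves; then any other pair $(X_j,X_m)$ in the fiber is diagonal over these by telescoping $(X_j,X_{j+1}),(X_{j+1},X_{j+2}),\dots,(X_{m-1},X_m)$.

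For the finiteness statement I would combine two bounds. The bound $\ind_\CQ(T)=2N-2$ from Theorem~\ref{T.QIndex} (using that $T$ is surface type, or more precisely: for $T$ of either type one still has $\ind_\CQ(T)\le 2N-2$ and only finitely many $\FN$-orbits of points $x$ with $\ind_\CQ(x)>0$, by \cite[Theorem~5.3]{CH10}) shows there are only finitely many $\FN$-orbits of points $x\in\bar T$ with $|\CQ^{-1}(x)|\ge 2$; since $\Stab(x)$ is finitely generated and acts on the finite fiber, each such $x$ contributes only finitely many $\Stab(x)$-orbits, hence finitely many $\FN$-orbits, of non-regular leaves over it. Summing over the finitely many orbits of such $x$ gives finitely many $\FN$-orbits of leaves in $L(T)\ssm L_r(T)$, as claimed.

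The main obstacle I anticipate is the middle step: proving that the legal turns at the relevant vertex of $\Gamma_i$ eventually cyclically order \emph{all} the germs over a fixed $x$ — i.e., that connectedness of the Whitehead graph (Lemma~\ref{L.Whitehead}) upgrades, in the limit, to the germs over $x$ forming a single legal cycle rather than merely a connected legal graph. This requires a careful analysis of how the finite fiber $\CQ^{-1}(x)$ sits inside the sequence of train tracks and how illegal turns get resolved by the induction; the surface and Levitt cases may need slightly different handling (Proposition~\ref{prop:surfaceDirections} versus the Rips-machine description), and one must be sure that no two distinct germs over $x$ ever get identified in a way that would merge leaves. The rest — isolatedness $=$ non-regularity, the index count for finiteness — is comparatively routine given the results already assembled in the paper.
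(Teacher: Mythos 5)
Your skeleton is right --- isolated $=$ non-regular via Lemma~\ref{C.LdBasisInd}, each fiber $(\CQ^2)^{-1}(x)$ is finite and contains a regular leaf by Lemma~\ref{L.2EndDer}, and finiteness of orbits of non-regular leaves follows from finiteness of orbits of points with $|(\CQ^2)^{-1}(x)|>1$. (One slip there: your intermediate claim that there are finitely many $\FN$-orbits of $x$ with $|\CQ^{-1}(x)|\ge 2$ is false, since \emph{every} point of the limit set has at least two $\CQ$-preimages; the correct threshold is $|\CQ^{-1}(x)|\ge 3$, equivalently $\ind_\CQ(x)>0$, which is what the index bound actually controls.) But the step you yourself flag as the ``main obstacle'' is a genuine gap, and it is also an unnecessary detour: you set out to prove that the points of $\CQ^{-1}(x)$ admit a cyclic order in which consecutive pairs are regular leaves, i.e.\ that the legal turns over $x$ form a single cycle. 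Nothing of the sort is needed for ``diagonal over $L_r(T)$'': the definition only asks for a chain $(X_1,X_2),\dots,(X_{n-1},X_n)$ of regular leaves with $l=(X_1,X_n)$, so \emph{connectedness} of the graph whose vertices are $\CQ^{-1}(x)$ and whose edges are the regular leaves in the fiber already suffices. The single-cycle statement is strictly stronger, reflects the surface picture rather than the general one, and you give no argument for it.

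Moreover, your reduction to the turns at the single component $v_i$ of $F_i$ containing $x$ is too coarse. The $k$ elements of $\CQ^{-1}(x)$ are the ends of the infinite orbit tree $\Gamma_x$ (vertex set the pseudo-orbit of $x$ in $K_A$), and two such ends need not diverge at the vertex $x$ itself: the leaf $l=(X,Y)$ corresponds to a bi-infinite geodesic in $\Gamma_x$ whose turns occur at translates $x.u$, not only at $x$. So one must legalize turns at \emph{every} vertex of $\Gamma_x$ that this geodesic crosses. That is exactly what the paper does: it defines a Whitehead graph $\wh(y,\Gamma_x)$ at each vertex $y$ of $\Gamma_x$ (as the image of the union of the Whitehead graphs $\wh(v,\Gamma_i)$ over all components $v$ of $F_i$ containing a copy of $y$), deduces its connectedness from Lemma~\ref{L.Whitehead}, and concludes that $l$ lies in the diagonal closure of the regular leaves in $(\CQ^2)^{-1}(x)$. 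If you replace your ``single legal cycle'' goal by this connectedness statement at all vertices of $\Gamma_x$, the argument closes without any of the delicate limiting analysis you were worried about.
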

\begin{proof}
  Let $l$ be a leaf in $L(T)$, and let $\CQ^2(l)=x$. If
  $(\CQ^2)\inv(x)=\{l\}$, then by Lemma~\ref{L.2EndDer}, $l$ is
  regular. Assume now that $|(\CQ^2)\inv(x)|>1$. From \cite{CH10},
  there are finitely many orbits of such points $x$ in $\bar T$ and as
  the action is free, $(\CQ^2)\inv(x)$ is finite and there are
  finitely many orbits of such leaves $l$.  By Lemma~\ref{L.2EndDer}, there are regular leaves in
  $(\CQ^2)\inv(x)$, and we now proceed to prove that $l$ is in the
  diagonal closure of the regular leaves in $(\CQ^2)\inv(x)$. 
 
  Let $A$ be a basis of $\FN$ and let $S=(K_A,A)=(F_0,A_0)$ be the
  system of isometries associated to $T$, and let $S_i=(F_i, A_i)$
  denote the output of $i$ iterations of the appropriate inductive
  procedure (either the Rips Machine or splitting, depending on the
  type of $T$).  Let $\Gamma_i$ denote the graph associated to $S_i$.

  Let $\Gamma_x$ be the (infinite graph) with vertex set the
  pseudo-orbit of $x$ under the pseudo-group $S$ (equivalently it is
  the intersection of the orbit of $x$ in $\bar T$ with $F_0=K_A$), and
  such that there is an edge labeled by $a\in A^{\pm 1}$ between $x.u$
  and $x.ua$, where $u\in\FN$ is a partial isometry defined at $x$.

  A turn $\{e,e'\}$ at the vetex $y=x.u$ in $\Gamma_x$ is legal if there
  exists a regular bi-infinite leaf-path $l'\in (\CQ^2)\inv(y)$ such
  that $l'([0,1])=e$ and $l'([0,-1])=e'$. As we may have performed
  splittings, the point $y$ may lie in more than one connected
  component of $F_i$. The Whitehead graph $\wh(y,\Gamma_x)$ at $y$ is
  the image of the union of the Whitehead graphs $\wh(v,\Gamma_i)$ for
  all components $v$ of $F_i$ which contain a copy of $y$. From
  Lemma~\ref{L.Whitehead}, we get that $\wh(y,\Gamma_x)$ is connected.  It follows that $l$ is in the diagonal closure of the set
  of regular leaves in $(\CQ^2)\inv(x)$.
\end{proof}

Combining Propositions~\ref{P.LevMin}, \ref{P.SurfaceMin}, and
\ref{P.Diag} and Lemma~\ref{L.NearlyMin} and \ref{C.LdBasisInd}, we get our main result:

\begin{thmA}
  Let $T$ be an $\R$-tree with a free, minimal action of $\FN$ by
  isometries with dense orbits. The tree $T$ is indecomposable if and
  only if $L(T)$ is minimal up to diagonal leaves. In this case the unique 
  minimal sublamination of $L(T)$ is the regular sublamination, which is equal to derived sublamination of $L(T)$.\qed
\end{thmA}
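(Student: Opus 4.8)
The plan is to obtain Theorem~A by assembling the results already established; there is essentially no new geometry in this last step, only bookkeeping. For the implication ``$L(T)$ minimal up to diagonal leaves $\Rightarrow T$ indecomposable'' nothing needs to be added: this is exactly Lemma~\ref{L.NearlyMin}, whose contrapositive is proved by extracting a non-diagonal leaf from the dual lamination of the stabilized subtree of a transverse family.

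For the converse, assume $T\in\barcvn$ is free, minimal, with dense orbits, and indecomposable. Indecomposable trees are mixing (immediate from the hierarchy of Section~\ref{SS.Mixing}), so by Proposition~\ref{P.MixType} the tree $T$ is of surface type or of Levitt type. Proposition~\ref{P.LevMin} (Levitt case) and Proposition~\ref{P.SurfaceMin} (surface case) then show that the regular sublamination $L_r(T)$ is minimal; Proposition~\ref{P.Diag} shows that $L(T)\ssm L_r(T)$ consists of finitely many $\FN$-orbits of leaves, each diagonal over $L_r(T)$; and Lemma~\ref{C.LdBasisInd} gives $L_r(T)=L(T)'$. Thus condition~(ii) in the definition of ``minimal up to diagonal leaves'' holds with $L_0=L_r(T)$, and the final assertion $L_0=L_r(T)=L(T)'$ follows as well. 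The one thing left to verify is condition~(i): that $L_r(T)$ is the \emph{unique} minimal sublamination of $L(T)$.

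For uniqueness, let $M\subseteq L(T)$ be a minimal sublamination with $M\neq L_r(T)$. Two distinct minimal sublaminations are disjoint (their intersection is a sublamination of each, so empty or equal to both), whence $M\cap L_r(T)=\emptyset$ and $M\subseteq L(T)\ssm L_r(T)=L(T)\ssm L(T)'$, the set of leaves of $L(T)$ that are isolated in $L(T)$. Hence every leaf of $M$ is isolated in $M$, so $M$ is discrete; being also closed in $\partial^2\FN$ and contained in finitely many $\FN$-orbits, $M$ is a finite union of closed $\FN$-orbits, and minimality forces $M$ to be either a single flip-invariant closed orbit $\FN\cdot l$ or a closed orbit together with its flip. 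Now for $l=(X,Y)\in M$ the stabilizer $\Stab(l)=\Stab(X)\cap\Stab(Y)$ is, since $\FN$ is free, trivial or infinite cyclic; if it is infinite cyclic $\langle h\rangle$ then $\{X,Y\}=\{h^{+\infty},h^{-\infty\}}$, so $l$ is carried by the finitely generated infinite-index subgroup $\langle h\rangle$, contradicting Proposition~\ref{P.Indecomp}. This leaves the case in which $M$ is a free (or free-plus-flip) closed orbit: a single free orbit cannot be flip-invariant (else $X=Y$), and I expect the exclusion of a flip-related pair of free closed orbits sitting inside $L(T)$ to be the delicate point, to be handled by tracing through the proof of Proposition~\ref{P.Diag}—its leaves are diagonal over $L_r(T)$, and the connectedness of the Whitehead graphs (Lemma~\ref{L.Whitehead}) forces their $\FN$-translates to accumulate onto $L_r(T)$, contradicting $M\cap L_r(T)=\emptyset$.

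The genuine mathematical content is of course hidden in the cited results: Propositions~\ref{P.LevMin} and \ref{P.SurfaceMin} rest on the Rips machine (Section~\ref{SS.Rips}) and on the splitting/Rauzy--Veech induction (Section~\ref{S.Splitting}), and Proposition~\ref{P.Diag} on Lemma~\ref{L.Whitehead}; the main obstacle has therefore been overcome before this assembly. Within the assembly itself, the step requiring the most care is the uniqueness of the minimal sublamination—specifically, ruling out a hypothetical minimal sublamination supported on the (isolated) diagonal leaves of $L(T)$.
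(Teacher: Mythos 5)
Your assembly is exactly the paper's: the authors prove Theorem~A by combining Propositions~\ref{P.LevMin}, \ref{P.SurfaceMin}, \ref{P.Diag} with Lemmas~\ref{L.NearlyMin} and \ref{C.LdBasisInd}, with no further argument. You are right, however, that this combination literally yields only condition~(ii) of ``minimal up to diagonal leaves'' together with the minimality of $L_r(T)$, and that the \emph{uniqueness} of the minimal sublamination still deserves a sentence; the paper leaves it implicit. Your reduction of uniqueness is correct as far as it goes: two distinct minimal sublaminations are disjoint, so a second one $M$ would live in $L(T)\ssm L(T)'$, i.e.\ consist of leaves isolated in $L(T)$, and your use of Proposition~\ref{P.Indecomp} to kill leaves with nontrivial (hence cyclic) stabilizer is fine. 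But you then stop: the case of a closed discrete free orbit (or flip-pair of orbits) is explicitly deferred to ``tracing through the proof of Proposition~\ref{P.Diag}'', and the mechanism you gesture at (diagonality plus connected Whitehead graphs) does not obviously produce the accumulation onto $L_r(T)$ that you need. As written this is a genuine, if small and acknowledged, gap.

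It closes with a short compactness argument that makes the whole case division unnecessary. Let $l=(X,Y)\in M$; after translating, assume $l\in C_A$, so $X_1\neq Y_1$. Writing $u_n=X_1\cdots X_n$, each shifted leaf $u_n\inv l=(u_n\inv X,u_n\inv Y)$ again lies in the compact set $C_A$. If infinitely many of these are distinct, they accumulate on a point of $L(T)\cap C_A$ which is by construction non-isolated in $L(T)$, hence lies in $L(T)'=L_r(T)$; since $M$ is closed and $\FN$-invariant this forces $M\cap L_r(T)\neq\emptyset$, whence $M=L_r(T)$ by minimality. If instead $u_n\inv l=u_m\inv l$ for some $n\neq m$, then $g=u_mu_n\inv\neq 1$ fixes $l$, so $\{X,Y\}=\{g^{+\infty},g^{-\infty}\}$ and $l$ is carried by the infinite-index subgroup $\langle g\rangle$, which Proposition~\ref{P.Indecomp} forbids. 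Either way there is no minimal sublamination other than $L_r(T)$. With this paragraph inserted, your proof is complete and is otherwise the same as the paper's.
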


\bibliographystyle{alpha}
\bibliography{indecompREF}

\end{document}